\documentclass[a4paper,11pt]{article}
\usepackage[utf8]{inputenc}
\usepackage{amsmath,amsthm,amssymb}
\usepackage{graphicx}
\usepackage[all]{xy}
\usepackage{verbatim}
\usepackage{enumerate}
\usepackage{float}
\usepackage{pgf}
\usepackage{tikz}
\usetikzlibrary{trees}
\usetikzlibrary{arrows,decorations.markings,patterns,calc,automata}

\DeclareGraphicsRule{*}{mps}{*}{}

\theoremstyle{plain}
\newtheorem{theorem}{Theorem}[section]
\newtheorem{lemma}[theorem]{Lemma}
\newtheorem{corollary}[theorem]{Corollary}
\newtheorem{proposition}[theorem]{Proposition}
\theoremstyle{definition}
\newtheorem{remark}[theorem]{Remark}
\newtheorem{notation}[theorem]{Notation}
\newtheorem{definition}[theorem]{Definition}
\newtheorem{example}[theorem]{Example}
\newtheorem{question}[theorem]{Question}

\newcommand{\tree}{\mathcal T}
\newcommand{\vertices}{\mathcal V}
\newcommand{\edges}{\mathcal E}
\newcommand{\tfg}{\mathcal F}

\newcommand{\AutT}{\operatorname{Aut}(\mathcal T)}
\newcommand{\Aut}{\operatorname{Aut}}
\newcommand{\Fix}{\operatorname{Fix}}
\newcommand{\Sym}{\operatorname{Sym}}
\newcommand{\Alt}{\operatorname{Alt}}
\newcommand{\col}{\operatorname{col}}
\newcommand{\prm}{\operatorname{prm}}
\newcommand{\Homeo}{\operatorname{Homeo}}
\newcommand{\UF}{\mathcal{N}_F}
\newcommand{\sgn}{\operatorname{sgn}}

\newcommand{\AAut}{\operatorname{AAut}}
\newcommand{\frakg}{\mathfrak{g}}
\newcommand{\gpoid}{\mathcal G}

\usepackage[affil-it]{authblk}
\usepackage{titlefoot}

\begin{document}

\title{Coloured Neretin Groups}
\author{Waltraud Lederle \\ ETH Z\"urich}
\maketitle
\unmarkedfntext{This article was in 2019 accepted for publication in Groups, Geometry and Dynamics.}

\begin{abstract}
	We give sufficient conditions for a subgroup of a tree almost automorphism group to be isomorphic to the topological full groups of a one-sided shift in the sense of Matui.
	As an application, we show that almost automorphism groups of trees obtained from universal groups constructed by Burger and Mozes are compactly generated and virtually simple.
	In addition, using the approach of Bader, Caprace, Gelander and Mozes we show that some of these almost automorphism groups do not have any lattice.
\end{abstract}

\section{Introduction}

In the nineties Neretin \cite{n92} introduced a class of groups acting on the boundary of a regular tree by piecewise tree automorphisms. He thought of them as "combinatorial analogs of the group of diffeomorphism of the circle" and studied their representations. The groups are now known as Neretin's group and are mostly considered as groups of almost automorphisms of regular trees; see Section \ref{chdefalmaut} for definitions. They attracted the interest of group theorists when Kapoudjian \cite{k99} proved them to be simple. Equipped with a natural topology, Neretin's group is totally disconnected and locally compact, and it is now one of the fundamental and most interesting examples in the new growing structure theory of totally disconnected, locally compact groups, which has been mostly developed by Caprace, Reid and Willis; see for example \cite{crw17,crw17a}.
Caprace and De Medts \cite{cm11} showed that Neretin's group is compactly generated by showing it contains a dense copy of a Higman-Thompson group.
In fact, Neretin's group is even compactly presented; see Le Boudec~\cite{lb16}.
This result was strengthened by Sauer and Thumann \cite{st17}, 
who showed that it admits a cellular action on a contractible
cellular complex with compact open stabilizers and such that the restriction of the action on each $n$-skeleton is cocompact.

In his recent study of the topological full group of an \'etale groupoid, Matui \cite{m15} focused on groupoids associated to one-sided shifts of finite type. He showed that the aforementioned Higman-Thompson groups are examples of such topological full groups. This gives one class of examples of groups which can be realized as tree almost automorphism groups and also as a topological full groups associated to a one-sided shift of finite type. In the present work, see Theorem \ref{thmalmauttoshift}, we generalize this example in the following sense.
We will exhibit sufficient conditions determining when a subgroup of a tree almost automorphism group is isomorphic to the topological full group associated to a one-sided shift of finite type, and we will give the shift explicitly. This will allow us to explicitly determine their abelianization and prove compact generation for groups they embed in densely.

One reason why Neretin's group is of great interest is the result by Bader, Caprace, Gelander and Mozes \cite{bcgm12} that Neretin's group does not have any lattice.
Lattices play a tremendously important role in geometric group theory.
Let $\Lambda$ be a locally compact group. A \emph{lattice} $\Gamma$ in $\Lambda$ is a discrete subgroup such that there exists a finite, $\Lambda$-invariant measure on the quotient $\Lambda/\Gamma$.
Neretin's group was the first known example of a locally compact simple group not admitting any lattice.
Other groups having these properties, which are acting on trees, were constructed by Le Boudec \cite{lb16}.
Besides being interesting in itself, being simple in combination with having no lattice is a necessary condition to also not admit any nontrivial invariant random subgroup (IRS).
%
So far no example of a compactly generated, non-discrete, locally compact group without nontrivial IRS is known.
It is an open question whether Neretin's group has a nontrivial IRS.
A good introduction into IRSs are, for example, the notes of Gelander \cite{g15}.

The major part of the present paper ist devoted to the study of generalizations of Neretin's group, obtaining more examples of locally compact, compactly generated, simple groups without lattices. Let us shortly describe the construction.
Let $\tree$ be a regular tree of degree $d+1$ and denote by $\AutT$ its group of automorphisms with topology generated by all vertex stabilizers. Let $G \leq \AutT$ be a closed subgroup. We are interested in the group of homeomorphisms of the boundary $\partial\tree$ of $\tree$ consisting of all those homeomorphisms which "locally look like" elements of $G$. To make the last statement precise, one could say that $\tfg(G)$ is the topological full group
of $G$ acting on the boundary $\partial \tree$; see Section \ref{chdefalmaut} for precise definitions.
We prove that if $G$ has Tits' Independence Property, then there exists a unique group topology on this group such that the inclusion $G \hookrightarrow \tfg(G)$ is continuous and open; see Proposition \ref{proptopexists}.

We investigate more closely the case where $G$ is a universal group in the sense of Burger and Mozes.
For every vertex $v$ of $\tree$ we fix a bijection from the $d+1$ edges incident to $v$ to the set $D:=\{0,\dots,d\}$.
For every element $g \in \AutT$ we may, thus, talk about its local action at a vertex as an element of $\Sym(D)$.
Let $F \leq \Sym(D)$ be a subgroup of the symmetric group on $d+1$ letters.
Burger and Mozes \cite{bm00a} constructed closed subgroups $U(F) \leq \AutT$, called \emph{universal groups},
whose local action at every vertex is in $F$.
%

\begin{definition}
	A subgroup $F \leq \Sym(D)$ is called a \emph{Young subgroup} if there is no subgroup $F \lneq F' \leq \Sym(D)$ preserving the orbits of $F$.
\end{definition}

The following theorem summarizes the main results of this paper; for full statements and proofs see Corollary \ref{cor:dnfsple}, Theorem \ref{thm:dnffi}, Theorem \ref{cpctgenthm}, Theorem \ref{nolattice} and Theorem \ref{thmnococpctlattice}.

\begin{theorem} \label{thmintroduction}
	Let $F \leq \Sym(\{0,\dots,d\})$ be any subgroup. The following hold for $\UF:=\tfg(U(F))$.
	\begin{itemize}
		\item[a)] The commutator subgroup $D(\UF)$ of $\UF$ is open, simple and has finite index.
		More precisely, the abelianization $\UF/D(\UF)$ is a quotient of
		$(\mathbb{Z}/2\mathbb{Z})^{D/F}$.
		\item[b)] The group $\UF$ is compactly generated.
		\item[c)] If $F$ is a Young subgroup with strictly less than $d$ orbits, then $\UF$ does not have any lattice. If $F$ has precisely $d$ orbits, then $\UF$ does not have any cocompact lattices.
	\end{itemize}
	Consequently, if $F$ is a Young subgroup with less than $d$ orbits, then $D(\UF)$ is a compactly generated, non-discrete, simple group without lattices.
\end{theorem}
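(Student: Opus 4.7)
The plan is to verify the three bullets separately and then assemble the final consequence, which is essentially automatic: since $D(\UF)$ is open of finite index in $\UF$, it inherits compact generation and non-discreteness (the latter because $\UF$ contains the non-discrete open subgroup $U(F)$), and any lattice in $D(\UF)$ is also a lattice in $\UF$, so the absence of lattices transfers.

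For the first bullet I would follow the template of Kapoudjian's original simplicity proof for Neretin's group. The key device is a ``signed type'' homomorphism
\[
  \sgn \colon \UF \longrightarrow (\mathbb{Z}/2\mathbb{Z})^{D/F}
\]
that records, for each $F$-orbit $O \subseteq D$, the cumulative parity of the local permutations performed by a representative of $g$ at vertices whose colour class lies in $O$. The delicate point is well-definedness under the honeycomb subdivision equivalence defining $\UF$, which reduces to checking that refining a vertex contributes an even signature in every orbit. Then $D(\UF) \subseteq \ker \sgn$ is immediate, and the reverse inclusion together with simplicity of $D(\UF)$ follows by a commutator/fragmentation argument in the Higman--Matui--Nekrashevych style: any nontrivial normal subgroup of $\ker \sgn$ must contain elements supported on an arbitrarily small cone of $\partial \tree$, and such elements generate the whole kernel.

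For the second bullet I would adapt the Caprace--De Medts strategy: combine a compact generating set of $U(F)$ (compactly generated by Burger--Mozes theory together with the compact generation of $\AutT$) with finitely many explicit almost automorphisms realising a Higman--Thompson-type transformation family. That these generate $\UF$ reduces to the observation that every element of $\UF$ factors as an element of $U(F)$ times a combinatorial tree-pattern map, and the latter is expressible in the adjoined finite set.

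The main obstacle is the third bullet. I would adapt the Bader--Caprace--Gelander--Mozes argument for Neretin's group, which combines simplicity, the presence of large compact open subgroups, and a Borel density / covolume contradiction to rule out the existence of lattices; the first two ingredients are exactly what the previous bullets supply for $\UF$, while the covolume step is the technical heart and must be redone in this generality. The hypothesis that $F$ does not act freely on $D$ is used precisely to ensure that vertex stabilisers in $U(F)$ are infinite, providing the large compact open subgroups needed for the covolume estimate. The single-switch case sits at the boundary of this hypothesis; there non-uniform lattices cannot be excluded by the general method, so I would handle it separately and recover only the cocompact conclusion via a direct covolume argument exploiting the $\mathbb{Z}/2\mathbb{Z}$ structure.
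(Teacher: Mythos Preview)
Your proposal for the first bullet has a genuine gap. The map $\sgn \colon \UF \to (\mathbb{Z}/2\mathbb{Z})^{D/F}$ that you sketch is \emph{not} well-defined in general: the claim that ``refining a vertex contributes an even signature in every orbit'' fails whenever an $F$-orbit has odd cardinality, and even for orbits of even cardinality it can fail on $\UF$ (as opposed to the Higman--Thompson-like subgroup) when the point stabilisers in $F$ do not restrict to the alternating group on that orbit. The paper proves exactly this in Proposition~\ref{sigmawelldef}. So you cannot build the abelianisation map directly, and in particular you cannot identify $D(\UF)$ with the kernel of such a $\sgn$.

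The paper circumvents this by a different route. It first constructs a subgroup $V_F \leq \UF$ and identifies it with the topological full group $[[\mathcal G_{\mathfrak g_F}]]$ of a one-sided shift of finite type (Theorem~\ref{thmvfistopfull}). Matui's general theory then gives finite presentation of $V_F$, simplicity of $D(V_F)$, and an explicit formula for $V_F/D(V_F)$ in terms of the Smith normal form of $\operatorname{id}-M_{\mathfrak g_F}^t$; a direct computation (Proposition~\ref{abprop}) shows this is $(\mathbb{Z}/2\mathbb{Z})^{l}$ or $(\mathbb{Z}/2\mathbb{Z})^{l+1}$. Density of $V_F$ in $\UF$ (Theorem~\ref{cpctgenthm}) then forces $\UF/D(\UF)$ to be a quotient of $V_F/D(V_F)$, giving the bound. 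Simplicity of $D(\UF)$ itself comes from a separate Matui theorem (Theorem~\ref{thmmatuispl}) for purely infinite minimal actions, and openness from the fact that $U(F)^+ \leq D(\UF)$ via Tits' simplicity. Your fragmentation idea is morally close to what underlies Matui's argument, but the abelianisation bound really does require the detour through the shift-space computation rather than an explicit sign map.

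For the second and third bullets your outline is in the right spirit and broadly matches the paper: compact generation is obtained from the density of the finitely generated $V_F$ together with a compact open subgroup of $U(F)$, and the no-lattice argument follows the BCGM template via a covolume estimate on the ascending union $O=\bigcup_n O_n$ (the technical heart is an inequality, Proposition~\ref{bigest}, which is where the single-switch exception arises).
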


The first two statements are an application of the connection between almost automorphism groups of trees and topological full groups associated to one-sided shifts of finite type. This connection allows us to find a dense subgroup of $\UF$ that, by Matui's work, is finitely generated.
This is the generalization of the fact that Neretin's group contains dense copies of a Higman-Thompson group.
The proof of the third statement follows the approach of Bader, Caprace, Gelander and Mozes~\cite{bcgm12}.

We want to point out the following questions that are left open in this article and we are interested in.

\begin{question}
	If a subgroup $G \leq \AutT$ has Tit's Independence Property and acts minimally on the tree, is its group of almost automorphisms $\tfg(G)$ compactly generated? Does it always densely contain a topological full group coming from a one-sided shift in the sense of Matui?
\end{question}

\begin{question}
	Let $\tree$ be a regular tree. If $G \leq \AutT$ is a closed subgroup satisfying Tit's Independence Property, which properties distinguish $\tfg(G)$ from Neretin's group?
\end{question}

\begin{question}
	It appears rather arbitrary that the proof that $\UF$ does not have any lattices goes through only if $F$ is a Young subgroup. If $F$ is not a Young subgroup, does $\UF$ have lattices?
	For transitive $F$ satisfying specific additional properties, the answer can be found in the work by Le Boudec \cite{lb16}, Corollary 7.7.. He constructs examples of groups without lattices which embed as open subgroups into $\UF$. For general $F$ the question whether $\UF$ has lattices or not is still open.
	Especially interesting would, of course, be a proof following a different approach to the one by Bader, Caprace, Gelander and Mozes or the one by Le Boudec.
\end{question}

\begin{question}
	Does $\UF$ have invariant random subgroups?
\end{question}

\subsection{Organization of the paper}

In Section 2 we set up basic notations, definitions and terminology.
We introduce universal groups, almost automorphism groups of trees and Higman-Thompson groups.
We establish some basic results about topological full groups of subgroups of $\AutT$ for a regular tree $\tree$.
We also define the group $\UF$ and show the existence of a locally compact group topology on
a class of almost automorphism groups including $\UF$.

In Section 3 we introduce the topological full group of an \'etale groupoid following Matui.
We also define one-sided irreducible shifts of finite type.
We give a connection between topological full groups associated to one-sided shifts and tree almost automorphisms.
As an application we will find a finitely generated subgroup $V_F \leq \UF$ which we think of as an analog of the Higman-Thompson group.

In Section 4 we show that $V_F$ is dense in $\UF$ and conclude that $\UF$ is compactly generated and its commutator subgroup is open, simple and has finite index. We also give normal subgroups of $\UF$.

In Section 5 we prove the third part of Theorem \ref{thmintroduction}.

Section 5 does not rely on Sections 3 and 4 and can be read independently.

\subsection{Acknowledgments}

I want to thank Marc Burger for helpful discussions and comments on previous versions of this work, Mario \v{S}iki\'c for careful reading of early versions and help with computer
algebra systems for the lattice part, David Robertson for fruitful discussions, and Nicol\'as Matte Bon, Adrien Le Boudec, Pierre-Emmanuel Caprace and Stephan Tornier for useful remarks about previous versions of this article.
I am particularly grateful to the anonymous referee for a very careful reading, plenty of good suggestions and pointing out a mistake in an earlier version.
I also want to acknowledge the hospitality of the Australian Winter of Disconnectedness in Creswick, Victoria and Newcastle, New South Wales, where part of this work was completed.

\section{Preliminaries and basic results}

\subsection{Trees}

In this subsection we establish notations and conventions about graphs and trees.
We follow the notion of Serre \cite{s03}. In particular, an edge $e$ has an origin $o(e)$, a terminus $t(e)$ and an inverse edge $\overline{e}$.

In the whole paper $\mathcal T = (\mathcal V,\mathcal E)$ denotes a locally finite tree with vertex set $\mathcal V$ and edge set $\mathcal E$. We assume it has no leaves and no isolated points in the boundary (see below for the definition of the boundary).
Endow $\tree$ with the usual metric such that for all $v,w \in \mathcal{V}$ 
the distance between $v$ and $w$ is the length of the geodesic (i.e. the shortest path) from $v$ to $w$.
Fix a vertex $v_0 \in \mathcal V$ of $\tree$ and consider $\tree$ as a rooted tree with root $v_0$. 
Now we can talk about the parent and the $d$ children of a vertex of $\tree$, namely its neighbours closer respectively more distant to $v_0$ (only $v_0$ does not have a parent but $d+1$ children).
A path starting at $v_0$ is called \emph{rooted}.
%

\begin{definition}
	The \emph{boundary} of $\tree$ is the set of all rooted infinite geodesics in $\tree$.
	It is denoted by $\partial \tree$.
\end{definition}

\paragraph{Topology on $\partial \tree$.}
For every vertex $v \in \mathcal V$ we denote by $\tree_v$ the subtree of $\tree$ 
whose vertices are all $w \in \mathcal V$ such that $v$ lies on the rooted geodesic to $w$.
It is a rooted tree with root $v$.
Its boundary $\partial \tree_v$ is a subset of $\partial \tree$ in an obvious way.
The set $\{\partial \tree_v \mid v \in \mathcal V\}$ is a basis of the topology on $\partial \tree$. 
With this topology $\partial \tree$ is a Cantor space.

\begin{figure}[H]
	\centering
	\begin{tikzpicture}[font=\footnotesize]  [grow'=down]
	
	\tikzstyle{level 1}=[level distance=5mm,sibling distance=20mm]
	\tikzstyle{level 2}=[level distance=5mm,sibling distance=10mm]
	\tikzstyle{level 3}=[level distance=5mm,sibling distance=5.5mm]
	\tikzstyle{level 4}=[level distance=5mm,sibling distance=2.5mm]
	\tikzstyle{level 5}=[level distance=4mm,sibling distance=1.5mm]
	
	\node [inner sep=0pt,outer sep=-0.4pt] at (0,0) {}
	child
	{child {child {child  child  } child {child  child  } } 
		child {child {child  child  } child {child  child  } } }
	child
	{child {child {child  child  } child {child  child  } } 
		child {child[very thick] {child  child  } child[ultra thick] {child  child  } } }
	child
	{ child {child {child  child  } child {child  child  } } 
		child {child {child  child  } child {child  child  } } };
	
	\node[above] at(0,0){$v_0$};
	\node[above]at(0.5,-1){$v$};
	\end{tikzpicture}
	\caption{The thick lines indicate the subtree $\tree_v$.}
\end{figure}
%

\paragraph{Automorphisms of $\tree$.}
Denote by $\AutT$ the group of automorphisms of $\tree$, that is, all graph morphisms $\tree \to \tree$ which are
bijective on $\mathcal{V}$ and $\mathcal{E}$. We define a group topology on $\AutT$ making it into a totally disconnected locally compact group.
Let $G \leq \AutT$. Denote by $\Fix_G(L) \leq G$ all the elements of $G$ which fix every element of $L$.
A neighbourhood basis of the identity in $\AutT$ consists of all subgroups of the form $\Fix_{\AutT}(L)$ with $L \subset \mathcal V \cup \mathcal E$ finite. With this topology each of these basis elements is compact and open.

\paragraph*{Terminology.}
A subset of a topological space is called \emph{clopen} if it is closed and open.

\paragraph{Tits' Independence Property.}
Tits \cite{t70}, Section 4.2, defined a property for subgroups of $\AutT$
and proved a simplicity theorem for groups satisfying it.
Let $L$ be any path in $\tree$.
For every vertex $v \in \mathcal{V}$ denote by $\pi(v)$ the unique vertex of $L$ which is closest to $v$.
For such a vertex $w$ let $L_w$ be the subtree spanned by $\pi^{-1}(w)$, that is, the inclusion-minimal subtree of $\tree$ containing $\pi^{-1}(w)$.
Let $G \leq \AutT$. The group $\Fix_G(L)$ leaves $L_w$ invariant.
Thus, for every $g \in \Fix_G(L)$ the restriction $|_{L_w} \colon \Fix_G(L) \to \Aut(L_w)$ is a well-defined homomorphism.
The group $G$ is said to have \emph{Tits' Independence Property} if for every $L$ as above the induced map $\Fix_G(L) \to \prod_{w}\Fix_G(L)|_{L_w}$ is an isomorphism.

\begin{remark} \label{titsrem}
	If $G \leq \AutT$ is closed, Tits' Independence Property is equivalent to each of the following conditions,
	which for non-closed $G$ are weaker, but equivalent to each other, in general (see \cite{a03}, Section 1.2.):
	\begin{itemize}
		\item replacing ``any path $L$'' by ``any finite subtree $L$ of $\tree$'';
		\item replacing ``any path $L$'' by ``any edge $L$ of $\tree$''.
	\end{itemize}
\end{remark}

The importance of Tits' Independence Property lies in the following theorem.

\begin{theorem}[\cite{t70}, Theorem 4.5] \label{thmtitssimpl}
	Let $G \leq \AutT$ be a subgroup satisfying Tits' Independence Property. Assume that $G$ neither preserves any proper subtree nor fixes any element of $\partial\tree$. Then, the subgroup $$G^+ := \langle \{\Fix_G(e) \mid e \in \mathcal{E} \} \rangle \leq G$$ generated by all edge fixators in $G$ is simple or trivial.
\end{theorem}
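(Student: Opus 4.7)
The plan is to show that any non-trivial normal subgroup $N \triangleleft G^+$ must equal $G^+$; this establishes simplicity once $G^+$ itself is non-trivial. Since by definition $G^+ = \langle \Fix_G(e) \mid e \in \mathcal{E} \rangle$, it is enough to prove $\Fix_G(e) \subseteq N$ for every edge $e \in \mathcal{E}$. A preliminary step is to use the hypotheses ``no proper invariant subtree'' and ``no fixed end'' to show that $G^+$ is transitive on the $G$-orbits of edges of $\tree$, so that the inclusion needs only be verified for a single edge in each such orbit.

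The key dynamical ingredient is that under the two running hypotheses, $G$ (and hence $G^+$, once it is seen to share these properties) contains hyperbolic elements with arbitrarily large translation length and with axes that can be positioned anywhere; this is a standard ping-pong / axis construction exploiting the absence of invariant subtrees and ends. Given $1 \neq n \in N$, conjugating $n$ by such hyperbolic elements — which keeps the conjugates inside $N$ by normality — produces an element $m \in N$ which displaces a prescribed edge $e$ to an edge $m(e)$ lying far away from $e$. By making the translation long enough and choosing the direction carefully, one can moreover arrange that one of the half-trees $T_-$ bounded by $e$ is mapped by $m$ into a subtree $m(T_-)$ disjoint from $T_-$.

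The heart of the proof is then a commutator calculation leveraging Tits' Independence Property in its edge formulation (Remark \ref{titsrem}). For the chosen edge $e$ with half-trees $T_+, T_-$, independence decomposes an arbitrary $g \in \Fix_G(e)$ as a product $g = g_+ g_-$, where $g_\pm$ fixes the opposite half-tree $T_\mp$ pointwise. Taking $g = g_-$ supported on $T_-$ and the $m \in N$ produced above, the conjugate $m g m^{-1}$ is supported in $m(T_-)$, which by construction is disjoint from $T_-$; the two elements $g$ and $m g m^{-1}$ therefore commute, and the commutator $[m, g^{-1}] = m g^{-1} m^{-1} g$ sits in $N$ with restriction equal to $g$ on $T_-$ and to $m g^{-1} m^{-1}$ on $m(T_-)$. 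To obtain $g$ itself (rather than this ``twinned'' element), one cancels the spurious contribution on $m(T_-)$ by multiplying against an element of $N$ constructed in the same fashion but supported on $m(T_-)$, using independence once more to re-split supports along a suitable edge inside $m(T_-)$.

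The main obstacle is precisely this last cancellation: the commutator by itself produces an element of $N$ that agrees with $g$ only up to a translated copy $m g^{-1} m^{-1}$ far away, and one must argue — iterating the construction at nested edges and carefully tracking which independence decompositions remain applicable — that this translated piece can be systematically absorbed into $N$. Once this bookkeeping is done, one concludes $g_- \in N$; by the symmetric argument applied to $T_+$ one has $g_+ \in N$, hence $g \in N$, and finally $\Fix_G(e) \subseteq N$ as required.
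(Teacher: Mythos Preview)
The paper does not prove this theorem; it merely quotes it with a reference to Tits' original article \cite{t70}, Th\'eor\`eme~4.5, and immediately moves on. There is therefore no ``paper's own proof'' to compare your sketch against.

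That said, your outline follows the standard route to Tits' simplicity theorem: reduce to showing $\Fix_G(e)\subseteq N$ for every edge $e$; produce inside $N$ an element $m$ displacing a half-tree $T_-$ into a disjoint region; use the independence decomposition $g=g_+g_-$; and form the commutator $[m,g_-^{-1}]\in N$, which agrees with $g_-$ on $T_-$ but carries an extra factor $mg_-^{-1}m^{-1}$ supported on $m(T_-)$. All of this is correct.

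The genuine gap is exactly the step you flag as ``the main obstacle''. Your proposed resolution --- ``iterating the construction at nested edges and carefully tracking which independence decompositions remain applicable'' --- does not work as stated: each iteration produces a new spurious factor on an even smaller half-tree, and the process does not terminate. Saying it is ``bookkeeping'' undersells the difficulty. In Tits' actual argument one does not iterate indefinitely; instead one chooses $m\in N$ so that $m(T_-)\subsetneq T_-$ (contracting, rather than disjoint) and exploits a specific algebraic identity: with $g':=mg_-m^{-1}$ supported on $m(T_-)\subset T_-$, one has $[g_-,m]=g_-(g')^{-1}\in N$, and combining this with the conjugation $g_-\mapsto g'$ by $m\in N$ and a second application of independence at the translated edge $m(e)$ yields $g_-\in N$ in finitely many steps. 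If you want to turn your sketch into a proof, this is the place to make precise; the rest is fine.
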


\subsection{Colorings and universal groups} \label{burgermozesgroups}

In this subsection $\tree=(\vertices,\edges)$ is a $(d+1)$-regular tree.
Definitions and statements presented here are, unless otherwise stated, due to Burger and Mozes \cite{bm00a}, Section 3.2.
For a more detailed introduction and proofs we refer to \cite{ggt18}, Section 4.

\paragraph{Legal colourings.}
Throughout the paper we denote $D := \{0,\dots,d\}$ and call it the \emph{set of colours}. 
We fix a \emph{legal edge colouring} of $\tree$, that is a map $$\operatorname{col} \colon \mathcal E \to D$$
satisfying the following two properties.
\begin{itemize}
	\item It is constant on geometric edges, i.e. $\col(e)=\col(\bar e)$ for all $e \in \mathcal E$.
	\item For every $v \in \mathcal V$ the edges incident to $v$ all have different colours, i.e.
	the restriction $\col|_{o^{-1}(v)} \colon o^{-1}(v) \to D$ is a bijection.
\end{itemize}

Let $F \leq \operatorname{Sym}(D)$ be any subgroup.
Every automorphism $g \in \AutT$ induces for each vertex $v \in \mathcal V$ a permutation $\prm_{g,v}\in \operatorname{Sym}(D)$ defined by $$\prm_{g,v}(\chi) = \operatorname{col}(g((\operatorname{col}|_{o^{-1}(v)})^{-1}(\chi))).$$

\begin{definition}
	The \emph{universal group} associated to $F$ is defined by
	\[
	U(F) = \{g \in \AutT \mid \forall v \in \mathcal V\colon \prm_{g,v} \in F\}.
	\]
	Informally speaking, it consists of all tree automorphisms whose local action is everywhere prescribed by $F$.
\end{definition}

That this indeed defines a group is due to the following lemma.

\begin{lemma}[\cite{ggt18}, Lemma 4.2.] \label{lemmultprm}
	Let $g,h \in U(F)$ and $v \in \mathcal{V}$.
	Then $\prm_{gh,v}=\prm_{g,hv} \circ \prm_{h,v}$.
\end{lemma}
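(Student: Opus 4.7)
The plan is to unwind the definition of $\prm_{\cdot,\cdot}$ carefully and observe that the asserted identity is essentially a ``chain rule'' coming from the fact that $gh$ sends the edges incident to $v$ to the edges incident to $ghv$ by first applying $h$ (landing in the edges at $hv$) and then $g$. The hypothesis $g,h\in U(F)$ plays no role for the formula itself; it only matters for concluding that the composition lies in $F$, which is what makes $U(F)$ a group.

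First, for clarity I would introduce the shorthand $\sigma_v := \col|_{o^{-1}(v)} \colon o^{-1}(v) \to D$, which is a bijection for every $v\in \mathcal V$ by the second defining property of a legal colouring. Since any $g\in \AutT$ restricts to a bijection $g|_{o^{-1}(v)}\colon o^{-1}(v) \to o^{-1}(gv)$, the definition of $\prm_{g,v}$ rewrites as
\[
\prm_{g,v} \;=\; \sigma_{gv} \circ g|_{o^{-1}(v)} \circ \sigma_v^{-1}.
\]

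Next I would compute $\prm_{gh,v}$ directly. Fix $\chi\in D$ and let $e := \sigma_v^{-1}(\chi)\in o^{-1}(v)$. Then $h(e)\in o^{-1}(hv)$, so its colour is
\[
\col(h(e)) \;=\; \sigma_{hv}(h(e)) \;=\; \prm_{h,v}(\chi).
\]
Write $\mu := \prm_{h,v}(\chi)$, so that $h(e) = \sigma_{hv}^{-1}(\mu)$. Applying $g$ and taking the colour gives
\[
\prm_{gh,v}(\chi) \;=\; \col\bigl(g(h(e))\bigr) \;=\; \col\bigl(g(\sigma_{hv}^{-1}(\mu))\bigr) \;=\; \prm_{g,hv}(\mu) \;=\; \prm_{g,hv}\bigl(\prm_{h,v}(\chi)\bigr),
\]
which is the claimed identity. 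Equivalently, inserting $\sigma_{hv}^{-1}\circ \sigma_{hv} = \mathrm{id}$ in the formula
$\prm_{gh,v} = \sigma_{ghv}\circ (gh)|_{o^{-1}(v)}\circ \sigma_v^{-1}$ factors it as $\prm_{g,hv}\circ \prm_{h,v}$.

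There is no real obstacle here; the only thing to be careful about is bookkeeping which bijection $\sigma_w$ is used at which vertex $w\in\{v,hv,ghv\}$. Once this is set up, the statement reduces to applying the definition twice. As a remark I would add that this lemma immediately shows $U(F)\leq \AutT$ is closed under multiplication, and since $\prm_{g^{-1},v} = (\prm_{g,g^{-1}v})^{-1}\in F$ by the same identity applied to $g$ and $g^{-1}$, also under inversion; hence $U(F)$ is indeed a subgroup.
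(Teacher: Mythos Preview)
Your proof is correct. The paper does not give its own proof of this lemma but merely cites it from \cite{ggt16}, so there is nothing to compare against; your argument is the standard direct verification by unwinding the definition, and your additional remark that the identity yields closure of $U(F)$ under multiplication and inversion is exactly what the paper alludes to immediately after stating the lemma.
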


\begin{remark}
	A different choice of a legal colouring will result in a universal group that is conjugate to the original one.
\end{remark}

\begin{remark}\label{rmkufplustriv}
	For every $F \leq \Sym(D)$ the universal group $U(F)$ is a closed subgroup of $\AutT$ satisfying Tits' Independence Property.
	It is not hard to see that the group $U(F)$ is discrete if and only if the action $F \curvearrowright D$ is free, which is again equivalent to $U(F)^+=\{1\}$.
\end{remark}

The following lemma about extending certain tree automorphisms to ``almost being in $U(F)$'' was formulated by Le Boudec in the case of a ball around a vertex.
A close look at the proof shows that it is valid for every subtree of $\tree$.

\begin{lemma}[\cite{lb16}, Lemma 3.4.] \label{lemextuf}
	Let $T$ be a subtree of $\tree$.
	Let $h \in \AutT$ be such that for every vertex $v$ of $T$ the permutation
	$\prm_{h,v}$ preserves the orbits of $F$.
	Then there exists $g \in \AutT$ such that $g|_T=h|_T$ and such that
	for all vertices $w \in \mathcal{V}$
	which are either leaves of $T$ or not vertices of $T$ holds $\prm_{g,v} \in F$.
\end{lemma}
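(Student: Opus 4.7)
The plan is to build $g$ by starting from $h|_T$ and extending outward from $T$ one vertex at a time, at each step freely picking a local permutation from $F$ that is compatible with the part of $g$ already constructed.

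First I would set $g$ to agree with $h$ on $T$. At every interior vertex $v$ of $T$ all $d+1$ edges incident to $v$ lie in $T$, so $\prm_{g,v}=\prm_{h,v}$ is forced, but no hypothesis on $g$ is demanded at such $v$ since $v$ is neither a leaf of $T$ nor outside $T$. The first substantive step is at a leaf $w$ of $T$. The vertex $w$ has a unique edge $e_w$ of $T$ incident to it, and since $g(e_w)=h(e_w)$, any extension must satisfy $\prm_{g,w}(\col(e_w))=\prm_{h,w}(\col(e_w))$. The assumption that $\prm_{h,w}$ preserves the $F$-orbits on $D$ guarantees the existence of some $\sigma\in F$ with $\sigma(\col(e_w))=\prm_{h,w}(\col(e_w))$; I would pick such a $\sigma$ and declare $\prm_{g,w}:=\sigma$. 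This determines the image under $g$ of every edge at $w$, and hence the image of every neighbour of $w$ lying outside $T$.

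Next I would extend $g$ recursively by induction on the distance from $T$. Suppose $g$ has already been defined at a vertex $u\notin T$ with $\prm_{g,u}\in F$, and let $u'$ be a neighbour of $u$ farther from $T$; then $g(u')$ is determined and the edge $\bar e$ from $u'$ to $u$ maps to an edge of colour $\prm_{g,u}(\col(\bar e))$. To specify $\prm_{g,u'}$ I need an element $\tau\in F$ with $\tau(\col(\bar e))=\prm_{g,u}(\col(\bar e))$; since $\prm_{g,u}\in F$, the choice $\tau=\prm_{g,u}$ trivially works (and in general any element of the coset $\prm_{g,u}\cdot\mathrm{Stab}_F(\col(\bar e))$ works). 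Iterating over all descendants produces $g$ on all of $\tree$, and by construction $\prm_{g,w}\in F$ at every leaf of $T$ and at every vertex outside $T$.

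The only step where something could fail is the choice at the leaves of $T$ in the second paragraph: this is precisely where the orbit-preserving hypothesis on $\prm_{h,v}$ is used, and it is the crux of the lemma. The inductive extension, by contrast, is automatic because $F$ is a subgroup and therefore acts on each orbit transitively, so a compatible element of $F$ always exists at every new vertex. The only remaining bookkeeping is to verify that the resulting $g$ is a genuine graph automorphism, which follows from the fact that at every vertex the specified permutation determines a bijection on incident edges compatible with the choices already made at the parent.
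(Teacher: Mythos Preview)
The paper does not give its own proof of this lemma; it is quoted from Le Boudec \cite{lb16}, Lemma~3.4, with the remark that the original statement was for balls but the argument works for any subtree. Your approach is the natural one and is essentially what such a proof looks like.

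There is, however, one small gap. You write that ``at every interior vertex $v$ of $T$ all $d+1$ edges incident to $v$ lie in $T$, so $\prm_{g,v}=\prm_{h,v}$ is forced.'' That is only true when $T$ is what the paper calls a \emph{complete} subtree. For a general subtree (a path of length at least two, for instance) an interior vertex $v$ of $T$ can have degree strictly less than $d+1$ in $T$; then some edges at $v$ lie outside $T$, the local permutation $\prm_{g,v}$ is not determined by $g|_T=h|_T$, and you must still specify $g$ on those edges before your outward recursion can start from the adjacent vertices. Moreover, the choice matters: for the inductive step at a neighbour $u'\notin T$ you need $\col(\bar e)$ and $\prm_{g,v}(\col(\bar e))$ to lie in the same $F$-orbit, so an arbitrary extension will not do.

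The fix is immediate and uses the hypothesis exactly once more. Since $\prm_{h,v}$ preserves the $F$-orbits, the partial bijection it induces on the colours of the in-$T$ edges at $v$ removes the same number of colours from each orbit on domain and range; hence it extends to an orbit-preserving bijection of all of $D$. Use any such extension to define $g$ on the remaining edges at $v$. After this, your recursion on distance from $T$ (taking, e.g., $\tau=\prm_{g,u}\in F$ at each new vertex) goes through unchanged.
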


%
%

\subsection{Almost automorphisms} \label{chdefalmaut}

\begin{definition}
	A finite subtree $T \subset \tree$ is called \emph{complete} if it contains the root $v_0$ and if for every vertex $v$ of $T$ that is not a leaf all children of $v$ are also in $T$.
\end{definition}

\begin{notation}
	For a subtree $T \subset \tree$ we will denote by $\mathcal{L}T \subset \mathcal V$ the set of leaves of $T$.
\end{notation}

\begin{notation}
	For a finite complete subtree $T \subset \tree$ the difference $\tree \setminus T$ will always denote the subgraph $\bigsqcup_{v \in \mathcal{L}T} \tree_v \subset \tree$.
	Hence $\tree \setminus T$ is a forest with $|\mathcal{L}T|$ many connected components.
\end{notation}

\begin{definition}
	Let $T_1$ and $T_2$ be finite complete subtrees of $\tree$. 
	An \emph{honest almost automorphism} of $\tree$ is a forest isomorphism $\varphi \colon \tree \setminus T_1 \to \tree \setminus T_2$. 
\end{definition}

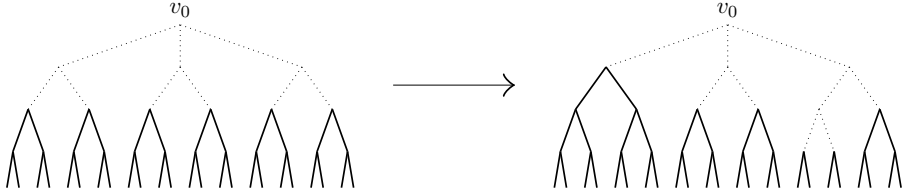
\begin{figure}[H]
	\centering
	\scalebox{0.8}{
		\begin{tikzpicture} [grow'=down]
		
		\tikzstyle{level 1}=[level distance=7mm,sibling distance=20mm]
		\tikzstyle{level 2}=[level distance=7mm,sibling distance=10mm]
		\tikzstyle{level 3}=[level distance=7mm,sibling distance=5mm]
		\tikzstyle{level 4}=[level distance=6mm,sibling distance=2mm]
		\tikzstyle{level 5}=[level distance=5mm,sibling distance=1mm]
		
		\node [inner sep=0pt,outer sep=-0.4pt] at (-4.5,0) {}
		child[dotted] foreach \i in {1,2,3}
		{child {child[solid,thick] {child  child  } 
				child[solid,thick] {child  child  } } 
			child {child[solid,thick] {child  child  } 
				child[solid,thick]  {child child  } } }; 
		\node [inner sep=0pt,outer sep=-0.4pt] at (4.5,0) {}
		child[dotted]
		{child {child[solid,thick] {child  child  } child[solid,thick] {child  child  } } 
			child {child {child[solid,thick]  child[solid,thick]  } child {child[solid,thick]  child[solid,thick]  } } }
		child[dotted]
		{child {child[solid,thick] {child  child  } child[solid,thick] {child  child  } } 
			child {child[solid,thick] {child  child  } child[solid,thick] {child  child  } } }
		child[dotted]
		{ child[solid,thick] {child {child  child  } child {child  child  } } 
			child[solid,thick] {child {child  child  } child {child  child  } } };
		
		\node [above] at (-4.5,0) {$v_0$};
		\node [above] at (4.5,0) {$v_0$};
		;
		
		\draw[decoration={markings,mark=at position 1 with {\arrow[line width=0.1mm,scale=3.5]{>}}},postaction={decorate}] (-1,-1)--(1,-1);    
		\end{tikzpicture}
	}
	\caption{The trees $T_1$ and $T_2$ are indicated with dotted lines.}
	\label{almautfigure}
\end{figure}

\paragraph{Almost automorphisms.} We now construct an equivalence relation on the set of honest almost automorphisms. 
Let $T_1,T_2,T'_1,T'_2 \subset \tree$ be finite complete subtrees of $\tree$. Let
$\varphi \colon  \tree \setminus T_1 \to \tree \setminus T_2$ and $\psi \colon  \tree \setminus T'_1 \to \tree \setminus T'_2$ be honest almost automorphisms of $\tree$. 
We say that $\varphi$ and $\psi$ are \emph{equivalent} if there exists a finite complete subtree $T \supset T_1 \cup T'_1$
such that $\varphi|_{\tree \setminus T} = \psi|_{\tree \setminus T}$.

An \emph{almost automorphism} of $\tree$ is the equivalence class of an honest almost automorphism under this equivalence relation. In our notation we will usually not distinguish between an honest almost automorphisms and its equivalence class,
but say it explicitly whenever we need to talk about an honest almost automorphism.

\paragraph{Simple expansions.}
In proofs it will be convenient to work with generators for this equivalence relation.
For finite complete subtrees $T \subset T' \subset \tree$, we say that $T'$ is obtained from $T$ by a \emph{simple expansion} if there exists a leaf $v$ of $T$ such that $T'$ is spanned by $T$ and the children of $v$. Note that any finite complete subtree of $\tree$ containing $T$ is obtained from $T$ by a sequence of simple expansions.
If in the preceding paragraph we require that $T'_1$ is obtained from $T_1$ by a simple expansion and $T=T'_1$,
the resulting relation generates the equivalence relation. 

\begin{remark} \label{equivclassrmk}
	Let $T_1,T_2$ be finite subtrees with the same number of leaves and let $\varphi \colon \tree \setminus T_1 \to \tree \setminus T_2$ be an honest almost automorphism.
	Then, for every finite complete subtree $T \subset \tree$ containing $T_1$ there exists a unique finite complete subtree $T' \subset \tree$ containing $T_2$ and a unique representative $\psi \colon \tree \setminus T \to \tree \setminus T'$ of $\varphi$.
	Explicitly $T' = \varphi(T \setminus T_1) \cup T_2$ and $\psi = \varphi|_{\tree \setminus T}$.
	
	The analogous statement holds for $T \supset T_2$.
\end{remark}

\paragraph{Product of two almost automorphisms.} Take finite complete subtrees $T_1,\dots,T_4 \subset \tree$.
Let $\varphi \colon \tree \setminus T_1 \to \tree \setminus T_2$ and $\psi \colon  \tree \setminus T_3 \to \tree \setminus T_4$ be almost automorphisms.
By the previous remark we can choose a finite complete subtree $T \supset T_4 \cup T_1$ of $\tree$ and take representatives for $\psi$ and $\varphi$ with image respectively domain $\tree \setminus T$. These representatives we can compose. The equivalence class of this composition is the product $\varphi \circ \psi$.
With this product the set of almost automorphisms of $\tree$ is a group, the \emph{almost automorphism group of the tree}, denoted $\AAut(\tree)$.

\begin{definition}
	If $\tree$ is a regular tree, then $\AAut(\tree)$ is called \emph{Neretin's group}.
\end{definition}

\paragraph{The group of almost automorphisms for a subgroup of $\AutT$.} 
Let $G \leq \AutT$.
We define its group $\mathcal{F}(G)$ of almost automorphisms. 
Let $T_1,T_2 \subset \tree$ be finite complete subtrees. 
A $G$-\emph{honest almost automorphism} of $\tree$ is an honest almost automorphism
$\varphi \colon \tree \setminus T_1 \to \tree \setminus T_2$ such that
for every $v \in \mathcal{L}T_1$ there exists a $g_v \in G$ with $\varphi|_{\tree_v}=g_v|_{\tree_v}$.
The elements of $\mathcal{F}(G)$ are the equivalence classes of all $G$-honest almost automorphisms.
It is not hard to see that $\mathcal{F}(G)$ is a subgroup of $\AAut(\tree)$.

\begin{remark}
	Note that Remark \ref{equivclassrmk} remains true for $G$-honest almost automorphisms.
\end{remark}

\begin{remark}
	Let $T_1$ and $T_2$ be finite complete subtrees of $\tree$ and
	let $G \leq \AutT$ be a subgroup.
	It is possible that there does not exist any $G$-honest almost automorphism
	$\varphi \colon \tree \setminus T_1 \to \tree \setminus T_2$.
	Consider for example the group $G=\Fix_{\AutT}(v_0)$.
	Then there is no $G$-honest almost automorphism as indicated in Figure \ref{almautfigure}
	because clearly every $G$-honest almost automorphism $\varphi \colon \tree \setminus T_1 \to \tree \setminus T_2$ needs to preserve the distance of the leaves of $T_1$ to $v_0$.
\end{remark}

\paragraph*{The intersection $\mathcal{F}(G) \cap \AutT$.} For a subgroup $G \leq \AutT$, sometimes the intersection $\mathcal{F}(G) \cap \AutT$ is of interest.
In general it is strictly larger than $G$ and can be much different.
For example, even if $G$ is a closed subgroup of $\AutT$, this is in general not the case for $\mathcal{F}(G) \cap \AutT$.
It is easy to see that $\mathcal{F}(G) \cap \AutT$ enjoys a weaker form of Tits' independence property,
where $L$ is replaced by arbitrary finite subtrees, see Remark \ref{titsrem}.
Le Boudec investigated the intersection for regular trees $\tree$ and $G=U(F)$ in \cite{lb16} 
and for more general $G \leq \Aut(\tree)$ in \cite{lb17a}, Section 4.

We now generalize a proposition by Le Boudec, Lemma 3.3 in \cite{lb16}.
Recall that by convention all our trees are locally finite.

\begin{proposition} \label{lemsameorb}
	Let $\tree$ be a tree without leaves.
	Let $G \leq \AutT$ be any subgroup. The orbits of $G$ on the directed edges of $\tree$ are the same as the orbits of $\mathcal{F}(G)\cap \AutT$.
\end{proposition}

\begin{proof}
	It suffices to show that $G$ and $\mathcal{F}(G)\cap \AutT$ have the same orbits on edges.
	Recall that an edge by definition has an origin and a terminus, i.e. a tree automorphism fixing an edge also fixes both of its endpoints.
	It is obvious that every orbit of $G$ is contained in an orbit of $\mathcal{F}(G)\cap \AutT$.
	
	Let, by contradiction, $h \in \mathcal{F}(G)\cap \AutT$ be such that there exists an edge $e$ of $\tree$ with $h(e) \notin Ge$. We call such an edge a \emph{bad edge} for $h$.
	Note that $h(\bar e) = \overline{h(e)}$ and therefore the inverse edge of a bad edge is also bad.
	Since $h$ coincides with automorphisms of $\tree$ on all but finitely many edges of $\tree$, it has at most finitely many bad edges. Therefore, there exists a bad edge with no bad children, we assume $e$ is such and we denote $v=o(e)$.
	Formally, this implies that
	for all edges $e'$ with $o(e')=v$ and $e' \neq e$ there exists a $g_{e'} \in G$ with $g_{e'}(e')=h(e')$. Note that $v$ is not a leaf, i.e. it has children.
	
	\begin{figure}[H]
		\centering
		\scalebox{0.7}{
			\begin{tikzpicture} [grow'=down]
			
			\tikzstyle{level 1}=[level distance=20mm,sibling distance=40mm]
			\tikzstyle{level 2}=[level distance=15mm,sibling distance=20mm]
			\tikzstyle{level 3}=[level distance=7mm,sibling distance=5mm]
			\tikzstyle{level 4}=[level distance=6mm,sibling distance=2mm]
			\tikzstyle{level 5}=[level distance=5mm,sibling distance=1mm]
			
			\node [inner sep=0pt,outer sep=-0.4pt] at (-4.5,0) {}
			child {node {$v$} {child{edge from parent [->] node [above] {$e'$}}
					child{edge from parent [->]}
					child{edge from parent [->]}}edge from parent [<-]  node [right] {$e$}}; 
			\node [inner sep=0pt,outer sep=-0.4pt] at (4.5,0) {}
			child {node {$h(v)$} {child{edge from parent [->] node [right] {$h(e')=g_{e'}(e')$}}
					child{edge from parent [->]}
					child{edge from parent [->]}}edge from parent [<-]  node [right] {$h(e)$}}; 
			\draw[decoration={markings,mark=at position 1 with {\arrow[line width=0.1mm,scale=2]{>}}},postaction={decorate}] (-1,-1.5)--(1,-1.5);    
			\end{tikzpicture}
		}
		\caption{Illustration of $o^{-1}(v)$ and $o^{-1}(h(v))$}
		\label{figov}
	\end{figure}
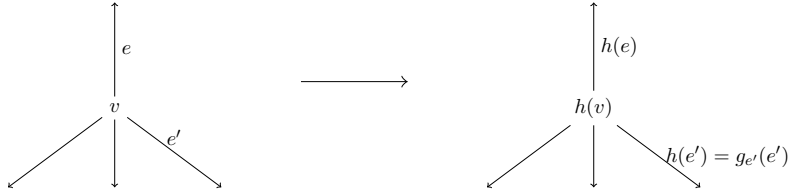
	
	Note that we do not assume that $h|_{\tree_{t(e')}}=g_{e'}|_{\tree_{t(e')}}$.
	Since there exists an element $g \in G$ such that $g(v)=h(v)$, for example $g_{e'}$ for $e' \neq e$,
	the subgroups $\Fix_G(v)$ and $\Fix_G(h(v))$ of $G$ are conjugated and act orbit-equivalently on $o^{-1}(v)$ and $o^{-1}(h(v))$, namely for example via $g_{e'}$.
	In particular, for all $e' \neq e$ holds $$|\Fix_G(v)\cdot e'|=|\Fix_G(h(v))\cdot h(e')|.$$
	Let now $e_1 \neq e$ be an element of $o^{-1}(v)$.
	Since $g_{e_1}(e) \neq h(e)$ there exists an $e_2 \in o^{-1}(v)$ with $g_{e_2}(e_2)=g_{e_1}(e)$.
	%
	In particular $e$ and $e_2$ are in the same $\Fix_G(v)$-orbit.
	Now if $h(e)$ and $h(e_2)$ are in the same $\Fix_G(h(v))$-orbit, we find an element $g \in G$ such that $g(e)=h(e)$, contradiction. Hence $h(e)$ and $h(e_2)$ are not in the same orbit.
	But since the orbits of $e_2$ and $h(e_2)$ need to have the same cardinalities,
	there exists an $e_3$ not equal to $e$ and $e_2$ such that $h(e_3)$ and $h(e_2)$ are in the same $\Fix_G(h(v))$-orbit, but $e_3$ is not in $\Fix_G(v)\cdot e_2$.
	That means there exists a $g \in G$ such that $g(e_3)=h(e_2)$.
	But then $g_{e_2}^{-1}g(e_3)=e_2$. This is a contradiction.
\end{proof}
%

\paragraph{Remark/Warning.}
Let $G \leq \AutT$ be a subgroup. Consider an honest almost automorphism
$\varphi \colon \tree \setminus T_1 \to \tree \setminus T_2$ for finite complete subtrees $T_1,T_2$ of $\tree$
such that the equivalence class of $\varphi$ is an element of $\mathcal{F}(G)$. 
It is in general not true that $\varphi$ is a $G$-honest almost automorphism.

\paragraph{The group $\mathcal{F}(G)$ as topological full group.}
We now give an alternative description of $\mathcal{F}(G)$ and prove that it is equivalent to the previous one.

\begin{definition}\label{deftopfull}
	Consider a group $\Lambda$ acting on a topological space $X$. 
	The \emph{topological full group} of this action is the following subgroup of
	$\Homeo(X)$. It consists of all those homeomorphisms $\varphi \colon X \to X$ such that for every $x \in X$ there exists a neighbourhood $U$ of $x$ and a group element $g \in \Lambda$ with $\varphi|_U = g|_U$. 
\end{definition}



\begin{lemma}
	For a subgroup $G \leq \AutT$ the group $\mathcal{F}(G)$ is isomorphic to the topological full group of $G$ acting on $\partial \tree$. Consequently, it acts itself faithfully on $\partial \tree$.
\end{lemma}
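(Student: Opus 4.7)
Define $\Phi\colon \mathcal{F}(G) \to \Homeo(\partial\tree)$ on representatives by sending a $G$-honest almost automorphism $\varphi \colon \tree\setminus T_1 \to \tree\setminus T_2$, with witnesses $g_v \in G$ satisfying $\varphi|_{\tree_v}=g_v|_{\tree_v}$ for every $v\in \mathcal{L}T_1$, to the piecewise-defined map $\Phi(\varphi)|_{\partial\tree_v}:=g_v|_{\partial\tree_v}$. Since $\{\partial\tree_v\mid v\in\mathcal{L}T_1\}$ and $\{\partial\tree_w\mid w\in\mathcal{L}T_2\}$ are clopen partitions of $\partial\tree$, and each $g_v$ sends $\partial\tree_v$ homeomorphically onto $\partial\tree_{\varphi(v)}$, this glues to a homeomorphism of $\partial\tree$. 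I would then verify in order: (i) $\Phi$ descends to equivalence classes, which by the generators of the relation reduces to invariance under simple expansions --- replacing a leaf $v$ of $T_1$ by its children and keeping the same witness $g_v$ on each child leaves the boundary map unchanged; (ii) $\Phi$ is a group homomorphism, mirroring the way the product in $\mathcal{F}(G)$ is defined by composing representatives on a common refinement; (iii) $\Phi(\varphi)$ lies in the topological full group of $G\curvearrowright\partial\tree$, since every point of $\partial\tree$ lies in some $\partial\tree_v$, an open neighbourhood on which $\Phi(\varphi)$ coincides with the global element $g_v\in G$.

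For injectivity I would use that the action of $\AutT$ on $\partial\tree_v$ is faithful: if $g\in\AutT$ acts as the identity on $\partial\tree_v$, then for every $w\in\tree_v$ the element $g$ fixes the infinite set $\partial\tree_w\subset\partial\tree_v$ setwise, forcing $g(w)=w$. Hence, if $\Phi(\varphi)=\mathrm{id}$ with representative as above, each $g_v$ fixes $\tree_v$ pointwise, so $\varphi|_{\tree_v}=\mathrm{id}$ for every $v\in\mathcal{L}T_1$; comparing the partitions $\bigsqcup_{v\in\mathcal{L}T_1} \tree_v=\tree\setminus T_1=\tree\setminus T_2=\bigsqcup_{w\in\mathcal{L}T_2} \tree_w$ forces $T_1=T_2$ and $\varphi$ to be the identity forest isomorphism. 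The general case reduces to this by considering $\psi^{-1}\circ\varphi$.

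For surjectivity, let $\Psi$ be an element of the topological full group. For every $\xi\in\partial\tree$, the definition together with the fact that $\{\partial\tree_v\}$ is a basis yields a basic neighbourhood $\partial\tree_v\ni\xi$ and a witness $g\in G$ with $\Psi|_{\partial\tree_v}=g|_{\partial\tree_v}$. Compactness of $\partial\tree$ produces a finite subcover, which after refinement to a common depth can be taken to be indexed by the leaves $v_1,\ldots,v_n$ of a finite complete subtree $T_1$, with associated witnesses $g_1,\ldots,g_n\in G$. The images $\partial\tree_{g_i(v_i)}=\Psi(\partial\tree_{v_i})$ then form a clopen partition of $\partial\tree$ by basic open sets, so $\{g_1(v_1),\ldots,g_n(v_n)\}$ is exactly $\mathcal{L}T_2$ for some finite complete subtree $T_2$. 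The piecewise-defined map $x\mapsto g_i(x)$ on each $\tree_{v_i}$ assembles to a $G$-honest almost automorphism $\tree\setminus T_1\to \tree\setminus T_2$ whose image under $\Phi$ is $\Psi$. The main obstacle is precisely this refinement step: one must check that any finite cover of $\partial\tree$ by basic opens can be refined to a partition indexed by the leaves of a complete subtree --- by descending each $v_i$ to its descendants at a common maximum depth and updating the witnesses accordingly --- and that the image partition retains the same form because each $g_i$ is a tree automorphism carrying subtrees to subtrees. The final consequence that $\mathcal{F}(G)$ acts faithfully on $\partial\tree$ is then immediate, since by Definition \ref{deftopfull} the topological full group is by construction a subgroup of $\Homeo(\partial\tree)$.
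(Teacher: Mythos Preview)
Your proposal is correct and follows essentially the same route as the paper: realize $\mathcal{F}(G)$ inside $\Homeo(\partial\tree)$ and match it with the topological full group by showing both inclusions, the nontrivial one via compactness of $\partial\tree$. You are in fact more careful than the paper, which simply asserts that ``$\mathcal{F}(G)$ can be seen as a subgroup of $\Homeo(\partial\tree)$'' without spelling out well-definedness on equivalence classes or injectivity; your treatment of these points is fine. The only genuine difference is in the surjectivity step: you refine the finite cover to a common depth so that $T_1$ is a ball $B_n(v_0)$, whereas the paper takes a finite \emph{partition} $\partial\tree=\bigsqcup_i \partial\tree_{v_i}$ and argues directly that the subtree spanned by $\{v_1,\dots,v_n\}$ is complete (any non-leaf vertex with a missing child would leave part of $\partial\tree$ uncovered). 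Both work; your version avoids that combinatorial check at the cost of a coarser $T_1$, while the paper's version yields the minimal domain tree but tacitly assumes one has already passed from a cover to a disjoint cover.
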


\begin{proof}
	The group $\mathcal{F}(G)$ can be seen as a subgroup of $\operatorname{Homeo}(\partial \tree)$ in the same way as $G$.
	Let $T_1,T_2$ be finite complete subtrees of $\tree$ with the same number of leaves and let
	$\varphi \colon \tree \setminus T_1 \to \tree \setminus T_2$ be a $G$-honest almost automorphism of $\tree$.
	Let $x \in \partial \tree$. Then $x$ is a path starting at $v_0$ and it needs to pass through one of the leaves of $T_1$. 
	Let $v \in \mathcal{L}T_1$ be this leaf. There exists a $g_v \in G$ with $\varphi|_{\tree_v}=g_v|_{\tree_v}$.
	For the action of $\varphi$ on $\partial \tree$ this means $\varphi|_{\partial \tree_v}=g_v|_{\partial \tree_v}$, and clearly $\partial \tree_v$ is an open neighbourhood of $x$.
	Therefore the equivalence class of $\varphi$ is an element of the topological full group of $G$ acting on $\partial \tree$.
	
	Let on the other hand $\psi$ be an element of the topological full group of $G$ acting on $\partial \tree$.
	By definition of the topology on $\partial \tree$ and by compactness there exist finitely man vertices
	$v_1,\dots,v_n \in \mathcal{V}$ and $g_1,\dots,g_n \in G$ such that 
	$\partial \tree = \bigsqcup_{i=1}^n \partial \tree_{v_i}$ and such that
	for every $i$ holds
	$\psi|_{\partial \tree_{v_i}} = g_i|_{\partial \tree_{v_i}}$.
	We claim that the subtree $T$ of $\tree$ spanned by $\{v_1,\dots,v_n\}$ is finite and complete.
	It is the union of all images of rooted geodesics to $v_i$ for $i=1,\dots,n$.
	Finiteness is therefore obvious.
	For completeness, assume that there is a vertex $v$ of $T$ which is not a leaf, but such that $v$ has a child $w$ which
	is not a vertex in $\tree$. Then $\partial \tree_w$ is not contained in $\bigcup_{i=1}^n \partial \tree_{v_i}$,
	which is a contradiction.
	With the same argument the tree $T'$ spanned by $\{g_1(v_1),\dots,g_n(v_n)\}$ is finite and complete and
	we can see that $\psi$ can be viewed as a $G$-honest almost automorphism
	$\tree \setminus T \to \tree \setminus T'$ with $\psi|_{\tree_{v_i}}=g|_{\tree_{v_i}}$.
\end{proof}

\subsubsection{Topology on almost automorphisms}
Let $G \leq \AutT$ be a closed subgroup. In particular $G$ is totally disconnected and locally compact.
We want to define a group topology on $\mathcal{F}(G)$ such that $G \leq \mathcal{F}(G)$ is open. This is not always possible, but we will see that we can do it if $G$ has Tits' Independence Property. First we need a lemma from Bourbaki.

\begin{lemma}[\cite{b98}, Ch. III, Sect. I, Subsect. 2, Prop. 1] \label{bourbakithm}
	Let $\Lambda$ be a group and $\mathcal B$ be a filter on $\Lambda$ satisfying the following three conditions.
	\begin{enumerate}
		\item For every $U \in \mathcal B$ there exists a $V \in \mathcal B$ such that $VV \subset U$.
		\item For every $U \in \mathcal B$ holds $U^{-1} \in \mathcal B$.
		\item For every $g \in \Lambda$ and every $V \in \mathcal B$ holds $gVg^{-1} \in \mathcal B$.
	\end{enumerate}
	Then, there exists a unique group topology on $\Lambda$ such that $\mathcal B$ is a neighbourhood basis of the identity element.
\end{lemma}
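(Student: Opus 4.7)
The plan is to construct the topology as the unique candidate forced by the requirement: declare $U \subset \Lambda$ to be open precisely when for every $g \in U$ there exists some $V \in \mathcal{B}$ with $gV \subset U$. Uniqueness is then immediate, because in any group topology in which $\mathcal{B}$ is a neighbourhood basis at $e$, left translations are homeomorphisms and $g\mathcal{B}$ must be a neighbourhood basis at $g$, which determines all open sets. So the task reduces to showing that this candidate is a group topology and that each $V \in \mathcal{B}$ is really a neighbourhood of $e$.

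First I would verify the topology axioms in the routine way, using the filter property of $\mathcal{B}$ for finite intersections. To see that each $V \in \mathcal{B}$ contains $e$ and an open set around it, conditions 1 and 2 enter: applying condition 1 to obtain $W \in \mathcal{B}$ with $WW \subset V$, then using condition 2 and the filter axioms to replace $W$ by the symmetric set $W \cap W^{-1} \in \mathcal{B}$, the fact that a proper filter cannot contain $\emptyset$ yields $e = w w^{-1} \in WW \subset V$ for any element $w$. The subset $\{h \in \Lambda : \exists W \in \mathcal{B},\, hW \subset V\}$ is then, by definition, the desired open neighbourhood of $e$ inside $V$.

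The heart of the argument is continuity of the group operations. For multiplication at $(g,h)$ one wants to shift a basic neighbourhood $ghV$ of $gh$ back into a product of neighbourhoods of $g$ and $h$; the computation $(gV'')(hV'') = gh \cdot (h^{-1} V'' h) \cdot V''$ forces one to choose $V''$ inside both $V'$ (where $V'V' \subset V$ is obtained from condition 1) and $hV'h^{-1}$. The existence of such a $V''$ is exactly what condition 3, combined with the filter property, delivers. Continuity of inversion at $g$ is analogous: conditions 2 and 3 together show that $W := (g^{-1} V g)^{-1}$ lies in $\mathcal{B}$, and by construction $(gW)^{-1} \subset g^{-1} V$.

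The main obstacle, and the step that uniquely requires condition 3, is this shifting of a neighbourhood of the identity past a group element: the conjugation $V \mapsto hVh^{-1}$ appears unavoidably in the multiplication calculation, and without conjugation invariance the basic neighbourhoods $gV$ at different points would not recombine under multiplication into another basic set of the same form. Conditions 1 and 2 alone yield the right formal behaviour at the identity, but only together with condition 3 do they produce a genuine topological group structure on all of $\Lambda$.
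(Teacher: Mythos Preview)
The paper does not supply its own proof of this lemma: it is quoted verbatim from Bourbaki with a citation and used as a black box in the proof of Proposition~\ref{proptopexists}. So there is no ``paper's approach'' to compare against.

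Your argument is correct and is essentially the classical proof one finds in Bourbaki. A couple of minor remarks. First, when you say ``$\mathcal{B}$ is a neighbourhood basis'', note that in the statement $\mathcal{B}$ is assumed to be a filter, so the conclusion is really that $\mathcal{B}$ is the full neighbourhood \emph{filter} of $e$; your construction does give this, since your open sets are exactly those $U$ with $gV \subset U$ for some $V \in \mathcal{B}$, and conversely every $V \in \mathcal{B}$ is a neighbourhood of $e$. Second, your verification that $e \in V$ for every $V \in \mathcal{B}$ is fine but could be streamlined: from $W \cap W^{-1} \in \mathcal{B}$ (hence nonempty) pick any $w$; then $w \in W$ and $w^{-1} \in W$, so $e = w \cdot w^{-1} \in WW \subset V$. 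You wrote exactly this, just slightly more verbosely. The continuity arguments for multiplication and inversion are correct, and your closing remark pinpointing condition~3 as the one that handles the conjugation appearing in $(gV'')(hV'') = gh \cdot (h^{-1}V''h) \cdot V''$ is the right conceptual summary.
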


Recall the important theorem of Van Dantzig about totally disconnected locally compact groups.

\begin{theorem}[\cite{d36}, TG. 39]
	For every totally disconnected locally compact group the set of its compact open subgroups is a neighbourhood basis of the identity.
\end{theorem}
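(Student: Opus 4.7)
The plan is to prove the statement in two stages: first a purely topological reduction to finding compact open neighborhoods of $e$, then a group-theoretic construction that upgrades these to subgroups.

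First I would establish the topological fact that in any totally disconnected locally compact Hausdorff space every point has a neighborhood basis of compact open sets. Given a neighborhood $U$ of $e$ in our totally disconnected, locally compact group $G$, local compactness yields a compact neighborhood $C \subseteq U$ of $e$. The subspace $C$ is compact Hausdorff and totally disconnected, so by the classical fact that in such spaces quasi-components coincide with connected components, hence with singletons, $C$ admits a basis of clopen subsets. Intersecting such a clopen neighborhood of $e$ in $C$ with the interior of $C$ produces a compact open neighborhood $K$ of $e$ with $K \subseteq U$.

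Next I would upgrade $K$ to a compact open subgroup contained in $K$. By continuity of multiplication, for each $x \in K$ there exist open neighborhoods $V_x$ of $e$ and $W_x$ of $x$ with $V_x W_x \subseteq K$. Compactness of $K$ lets me pick finitely many $x_1,\dots,x_n$ with $K \subseteq \bigcup_i W_{x_i}$, so that $V_0 := \bigcap_i V_{x_i}$ is an open neighborhood of $e$ satisfying $V_0 K \subseteq K$. Replacing $V_0$ by $V := V_0 \cap V_0^{-1} \cap K$ gives a symmetric open neighborhood of $e$ contained in $K$ which still satisfies $V K \subseteq K$. Then $H := \bigcup_{n \geq 1} V^n$ is an open subgroup of $G$; an easy induction using $V K \subseteq K$ shows $V^n \subseteq K$ for all $n$, so $H \subseteq K$. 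Since open subgroups of topological groups are automatically closed (their complements are unions of cosets, each of which is open), $H$ is a closed subset of the compact set $K$, hence compact. Thus $H$ is a compact open subgroup contained in $U$, as required.

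The main obstacle is the initial topological step, namely producing a compact open neighborhood of $e$ inside an arbitrary neighborhood $U$. The argument via quasi-components in compact Hausdorff totally disconnected spaces is classical but is the only genuinely nontrivial input; once it is in hand, the rest is a routine compactness-and-continuity argument combined with the standard trick that open subgroups of topological groups are automatically clopen.
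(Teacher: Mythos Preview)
The paper does not actually prove this theorem; it merely quotes it as a classical result with a reference to Van Dantzig's original work. There is therefore no proof in the paper to compare your attempt against.

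Your argument is the standard proof of Van Dantzig's theorem and is essentially correct. One small imprecision in the first stage: you write that ``intersecting such a clopen neighborhood of $e$ in $C$ with the interior of $C$'' yields a compact open set, but an intersection with $\operatorname{int}(C)$ need not remain closed. What you want instead is to use that clopen sets form a basis of $C$ to \emph{choose} the clopen-in-$C$ neighborhood $K$ of $e$ already contained in $\operatorname{int}(C)$; then $K$ is closed in $G$ (closed in the compact set $C$, which is closed in the Hausdorff space $G$) and open in $G$ (since $K = K \cap \operatorname{int}(C)$ is open in the open subset $\operatorname{int}(C)$). With this adjustment the argument goes through.
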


\begin{proposition} \label{proptopexists}
	Assume $G \leq \AutT$ is closed and has Tits' Independence Property. Then there exists a unique group topology on $\mathcal{F}(G)$ such that
	$G \leq \mathcal{F}(G)$ is an open subgroup.
\end{proposition}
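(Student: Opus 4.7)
The plan is to apply Bourbaki's Lemma (Lemma \ref{bourbakithm}) to the filter $\mathcal B$ on $\mathcal{F}(G)$ generated by the subgroups $\Fix_G(T) \leq G$, with $T$ ranging over finite subtrees of $\tree$. By Van Dantzig's theorem applied to the totally disconnected locally compact group $G$, these subgroups form a neighborhood basis of the identity in $G$, so once a topology is produced by Bourbaki's Lemma the inclusion $G \hookrightarrow \mathcal{F}(G)$ is automatically open and induces the original topology on $G$. Uniqueness is routine: if $G \leq \mathcal{F}(G)$ is open with its original topology, the neighborhoods of any $g \in \mathcal{F}(G)$ are forced to be the translates $gU$ of neighborhoods $U$ of the identity in $G$.

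Conditions (1) and (2) of Bourbaki's Lemma are immediate since each $\Fix_G(T)$ is a subgroup. The substantive content is condition (3): for every $g \in \mathcal{F}(G)$ and every finite subtree $S$ of $\tree$, one must exhibit a finite subtree $S'$ such that $\Fix_G(S') \subset g\,\Fix_G(S)\,g^{-1}$. I represent $g$ by a $G$-honest almost automorphism $\varphi \colon \tree \setminus T_1 \to \tree \setminus T_2$, and by Remark \ref{equivclassrmk} enlarge $T_1$ so that $T_1 \supset S$. The claim is that $S' := T_2$ works. To verify, fix $h \in \Fix_G(T_2)$, and for each $v \in \mathcal{L}T_1$ choose $g_v \in G$ with $g_v|_{\tree_v} = \varphi|_{\tree_v}$; write $w := \varphi(v) \in \mathcal{L}T_2$. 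Since $h$ fixes $w$, the element $g_v^{-1}hg_v \in G$ fixes $v$ and preserves $\tree_v$, hence fixes the edge $e_v$ joining $v$ to its parent in $T_1$. Tits' Independence Property applied to $e_v$ (valid by Remark \ref{titsrem} since $G$ is closed) then yields an element $\alpha_v \in \Fix_G(e_v) \leq G$ that equals $g_v^{-1}hg_v$ on $\tree_v$ and the identity on the complementary halftree. The $\alpha_v$ have pairwise disjoint supports, so their product $k := \prod_v \alpha_v$ lies in $\Fix_G(T_1) \subset \Fix_G(S)$. By construction $k$ and $g^{-1}hg$ agree on each $\partial \tree_v$, hence coincide as homeomorphisms of $\partial \tree = \bigsqcup_v \partial \tree_v$, and so $g^{-1}hg = k \in \Fix_G(S)$ as required.

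I expect the key obstacle to lie entirely in condition (3): conjugation by $g \in \mathcal{F}(G) \setminus G$ turns a single element $h \in G$ into a collection of local pieces $g_v^{-1}hg_v$, one per leaf of $T_1$, which must be reassembled into a single automorphism of $\tree$ lying in $G$. Tits' Independence Property is exactly the tool that makes this reassembly possible, allowing one to extend a local restriction by the identity on the complementary halftree and stay inside $G$. A harmless corner case is $T_1 = \{v_0\}$, where a leaf has no parent edge in $T_1$; this is dispatched by performing one simple expansion before running the argument.
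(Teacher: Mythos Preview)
Your proof is correct and follows essentially the same route as the paper: set up the filter generated by the $\Fix_G(T)$, note that conditions (1) and (2) of Bourbaki's Lemma are automatic, and verify condition (3) by representing $g$ as a $G$-honest almost automorphism and using Tits' Independence Property to show the conjugate $g^{-1}hg$ lands back in $\Fix_G(T_1)$. The only cosmetic difference is that the paper applies Tits' Independence Property once to the finite subtree $T$ (via Remark \ref{titsrem}), whereas you apply it edge-by-edge at each $e_v$ and then multiply the resulting $\alpha_v$; the two are equivalent and yield the same element.
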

%

\begin{proof}
	Since $G$ is totally disconnected and locally compact, any group topology on $\mathcal{F}(G)$ for which $G \leq \mathcal{F}(G)$ is open also has to be totally disconnected and locally compact.
	So we choose the filter on $\mathcal{F}(G)$ defined by 
	$$\mathcal B = \{U \subset \mathcal{F}(G) \mid U \cap G \subset G \text{ contains an open neighbourhood of }id\}.$$ 
	Conditions 1 and 2 of Lemma \ref{bourbakithm} are clearly fulfilled by van Dantzig's Theorem.
	
	To verify Condition 3 let $\varphi \in \mathcal{F}(G)$ and $O \in \mathcal{B}$ be arbitrary.
	Take a finite complete subtree $T \subset \tree$ such that $\operatorname{Fix}_{G}(T) \subset O$.
	By Remark \ref{equivclassrmk} we can choose $T$ big enough such that 
	there exists a finite complete subtree $T' \subset \tree$ and a representative
	$\varphi \colon \tree \setminus T' \to \tree \setminus T$ as $G$-honest almost automorphism.
	Let $g \in \operatorname{Fix}_{G}(T')$ be arbitrary.
	Then the $G$-honest almost automorphism $\varphi g \varphi^{-1} \colon \tree \setminus T \to \tree \setminus T$ fixes the leaves of $T$ and
	therefore extends to an element in $\Fix_{\mathcal{F}(G) \cap \AutT}(T)$. Also, on each connected component of $\tree \setminus T$ it coincides with an element of $G$. Hence by Tits' Independence Property 
	it is an element of $\operatorname{Fix}_{G}(T) \subset O$.
	We thus proved that $\varphi \operatorname{Fix}_{G}(T') \varphi^{-1} \subset O$, or equivalently
	$ \operatorname{Fix}_{G}(T')\subset  \varphi^{-1} O\varphi$, which shows $\varphi^{-1}O\varphi \in \mathcal B$.
\end{proof}

It is obvious that whenever $H$ is a subgroup of $G \leq \AutT$, then also $\mathcal{F}(H)$ is a subgroup of $\mathcal{F}(G)$. The next proposition relates the actions of $H$ and $G$ on $\tree$ to a topological property of the almost automorphism groups.

\begin{proposition} \label{propdense}
	Let $H \leq G$ be subgroups of $\AutT$. Assume $G$ is closed in $\AutT$ and has Tits' independence property,
	and endow $\mathcal{F}(G)$ with the topology from Proposition \ref{proptopexists}.
	Then $\mathcal{F}(H)$ is dense in $\mathcal{F}(G)$ if and only if $H$ and $G$ have the same orbits on directed edges of $\tree$.
\end{proposition}

\begin{proof}
	%
	Assume $\overline{\mathcal{F}(H)} = \mathcal{F}(G)$.
	Let $e$ be an edge of $\tree$ and let $g \in G$.
	Let $T \subset \tree$ be a finite complete subtree containing $g(e)$. 
	Since $\mathcal{F}(H)$ is dense in $\mathcal{F}(G)$ and $\operatorname{Fix}_{G}(T)$ is open, there exist $\psi \in \mathcal{F}(H)$ and $g' \in \operatorname{Fix}_{G}(T)$
	such that $g = g' \psi$.
	Hence, $\psi = g'^{-1} g$ is an element of $\mathcal{F}(H)\cap \AutT$ satisfying $\psi(e)=g(e)$.
	Now we are done by Proposition~\ref{lemsameorb}.
	
	Assume now that $H$ and $G$ have the same orbits on directed edges of $\tree$. Let $T \subset \tree$ be an arbitrarily big finite complete subtree of $\tree$. Let $\varphi \in \mathcal{F}(G)$.
	We want to show that there exists a $\psi \in \mathcal{F}(H)$ such that $\varphi \in \psi \cdot \operatorname{Fix}_{G}(T)$.
	Let $T_1,T_2$ be finite complete subtrees of $\tree$ such that $\varphi \colon \tree \setminus T_1 \to \tree \setminus T_2$ is a representative as $G$-honest almost automorphism.
	Assume without loss of generality that $T_1$ contains $T$.
	Let $v$ be a leaf of $T_1$ and let $e$ be the unique edge of $\tree$ not contained in $T_1$ such that $t(e)=v$. Let $g_v \in G$ be such that $g_v|_{\tree_v}=\varphi|_{\tree_v}$.
	Since $H$ and $G$ have the same orbits, there exists an $h_v \in H$ such that $h_v(e)=g_v(e)$. For this $h_v$ holds $h_v(\tree_v)=g_v(\tree_v)$.
	Let $\psi \colon \tree \setminus T_1 \to \tree \setminus T_2$ be the $H$-honest almost automorphism such that for every leaf $v$ of $T_1$ holds $\psi|_{\tree_v}=h_v|_{\tree_v}$.
	By construction $\psi^{-1} \varphi \in \operatorname{Fix}_{G}(T_1) \leq \operatorname{Fix}_{G}(T)$, so we are done.
\end{proof}

\begin{example}
	If $\tree$ is regular of degree $d+1$ and $F \leq F' \leq \Sym(D)$ then $\tfg(U(F))$ is dense in $\tfg(U(F'))$ if and only if $F$ and $F'$ have the same orbits. See Section \ref{burgermozesgroups} to recall definitions. In addition, Proposition 3.5 in \cite{lb16} proves that $\tfg(U(F))$ is closed in $\mathcal{N}$ if and only if $F$ is a Young subgroup.
\end{example}

\subsection{Higman-Thompson groups} \label{chht}

We follow the approach of Caprace and De Medts described in \cite{cm11} Section 6.3.
For a general reference with proofs, see Higman's lecture notes \cite{h74}.
Another good introduction is \cite{b87}, Section 4.

\begin{definition}
	A \emph{plane order} on a rooted tree is a collection of total orders $\{<_v \mid v \in \mathcal V\}$ such that
	for each $v \in \mathcal{V}$ the element $<_v$ is a total order on the children of $v$.
\end{definition}
%
%
%

\begin{remark}
	This is called a plane order because it indicates an embedding of the tree into $\mathbb{R}^2$ with the following properties.
	The root is at the origin and the children of each vertex are below its parent, arranged from left to right according to the plane order.
\end{remark}

\begin{definition} \label{locordpresdef}
	An almost automorphism $\varphi \in \AAut(\tree)$ is called \emph{locally order-preserving} if there exist
	finite complete subtrees $T_1,T_2 \subset \tree$ 
	and a representative as honest almost automorphism
	$\varphi \colon \tree \setminus T_1 \to \tree \setminus T_2$ satisfying the following.
	For every vertex $v$ of $\tree \setminus T_1$ the restriction of $\varphi$ on the children of $v$ is order-preserving.
\end{definition}

It is not hard to see that the set of locally order-preserving almost automorphisms is a subgroup which does not depend on the root, but on the plane order.

\begin{definition} \label{htdef}
	Let $\tree$ be such that the root has $k$ children and all other vertices have $d$ children.
	The subgroup of $\AAut(\tree)$ of locally order-preserving elements is called the \emph{Higman-Thompson group} $V_{d,k}$.
\end{definition}

The conjugacy class of the Higman-Thompson group inside $\AAut(\tree)$ does not depend on the plane order.
In Section \ref{chvf} we will specify a plane order or a regular tree giving us a copy of the Higman-Thompson group which will turn out to be useful for our purposes.

\begin{remark} \label{htdetermindrem}
	Let $T_1,T_2 \subset \tree$ be finite complete subtrees with the same number of leaves.
	Let $\kappa\colon \mathcal{L}T_1 \to \mathcal{L}T_2$ be a bijection.
	Then there exists a unique honest almost automorphism $\varphi \colon \tree \setminus T_1 \to \tree \setminus T_2$ 
	extending $\kappa$ in an order-preserving way.
	We call its equivalence class the almost automorphism \emph{induced by} $\kappa$.
	Let $T_1'$ be a finite complete subtree of $\tree$ containing $T_1$
	and let $T_2'$ be the finite complete subtree of $\tree$ spanned by $\varphi(\mathcal{L}T_1')$.
	Then $\varphi$ is also induced by the bijection
	$\varphi|_{\mathcal{L}T_1'} \colon  \mathcal{L}T_1' \to \mathcal{L}T_2'$.
\end{remark}

\begin{notation}
	For any group $\Lambda$, we denote by $D(\Lambda)$ its commutator subgroup, also called the derived subgroup.
\end{notation}

\paragraph{Abelianization of $V_{d,k}$.} Higman proved that $V_{d,k}$ is finitely presented and that it is simple if $d$ is even and has a simple subgroup of index $2$, its commutator subgroup $D(V_{d,k})$, if $d$ is odd. 
We will now describe the quotient map $V_{d,k} \to V_{d,k}/D(V_{d,k})$.
We will not directly need this quotient map for the present work, but we will generalize the concept in Section~\ref{chspl} and therefore present the idea here. First we need to extend the plane order on $\tree$ to a total order on $\mathcal{V}$.

\begin{definition}\label{deflex}
	Let $\{<_v \mid v \in \mathcal{V}\}$ be a plane order on $\tree$.
	The \emph{lexographical order} on $\tree$ is the total order $<$ on $\mathcal{V}$ defined as follows.
	The choice of the root $v_0$ induces a partial order $\prec$ on $\mathcal{V}$, namely $v \prec w$ if and only if
	$\tree_v \subset \tree_w$. Note that $\prec$ is a join-semilattice, i.e. every finite set has a supremum.
	\begin{itemize}
		\item If $v_1,v_2 \in \mathcal{V}$ are such that $v_1 \prec v_2$, then $v_1 < v_2$.
		\item Otherwise, let $v$ be the supremum of $v_1$ and $v_2$ with respect to $\prec$. 
		For $i=1,2$ let $v_i'$ be the child of $v$ satisfying $v_i \prec v_i'$.
		Then $v_1 < v_2$ if and only if $v_1' <_v v_2'$.
	\end{itemize}
\end{definition}

Let $T_1,T_2,\varphi,\kappa$ as in Defintion \ref{locordpresdef}.
Let $\iota \colon \mathcal{L}T_2 \to \mathcal{L}T_1$ be the unique order-preserving bijection with respect to the lexographical order on $\mathcal{V}$.
Then $\iota \circ \kappa$ is a permutation of the elements in $\mathcal{L}T_1$ and we can consider its sign $\sgn(\iota \circ \kappa) \in \{1,-1\}$.
We want to know when the map $\varphi \mapsto \sgn(\iota \circ \kappa)$ descends to a well-defined homomorphism $V_{d,k} \to \{1,-1\}$.

We replace $T_1$ (and thus also $T_2$) by a simple expansion $T_1'$ (respectively $T_2'$).
We denote by $\kappa'$ the bijection $\mathcal{L}T_1' \to \mathcal{L}T_2'$ induced by $\varphi$
and by $\iota' \colon \mathcal{L}T'_2 \to \mathcal{L}T'_1$ the unique order-preserving bijection with respect to the lexographical order on $\mathcal{V}$.
If $d$ is odd, then $\sgn(\iota' \circ \kappa')=\sgn(\iota \circ \kappa)$.
Therefore the map $\varphi \mapsto \sgn(\iota \circ \kappa)$ descends to a homomorphism $V_{d,k} \to \{1,-1\}$.
The kernel of this homomorphism is the subgroup $D(V_{d,k})$, which is simple as proven by Higman.
If $d$ is even, however, it is not difficult to see that $\sgn(\iota' \circ \kappa')=1$.
Therefore the map $\varphi \mapsto \sgn(\iota \circ \kappa)$ does not descend to a well-defined homomorphism $V_{d,k} \to \{1,-1\}$, but $V_{d,k}$ is simple, see~\cite{h74}.


\section{From almost automorphisms to shifts} \label{chgenht}

In this section we will define the topological full group of an \'etale groupoid as introduced by Matui.
We will see how certain groups of tree almost automorphisms are isomorphic to topological full groups of groupoids associated to one-sided shifts.

\subsection{The topological full group of a groupoid}

We refer to the preliminaries in Matui's article \cite{m12} for a more detailed introduction to this topic.

\paragraph{Topological groupoids.}
A \emph{groupoid} is a category such that every morphism is an isomorphism. 
For our purposes we assume in addition that it is a small category, i.e. the class of objects as well as the class of morphisms are sets.
A \emph{topological groupoid} is a groupoid $\mathcal{G}$ such that the set of objects and the set of morphisms are topological spaces and all structure maps (composition, inverse, identity, source and range) are continuous, and such that source and range are open maps.
We denote by $\mathcal{G}^{(0)}$ the space of objects and by $\mathcal{G}^{(1)}$ the space of morphisms of $\mathcal{G}$.
Denote by $$s,r \colon \mathcal G^{(1)} \to \mathcal{G}^{(0)}$$ the source and range maps. A topological groupoid is called \emph{\'etale} if $s$ and $r$
are local homeomorphisms. We will assume in addition that $\mathcal{G}^{(0)}$ is a Cantor space and $G^{(1)}$ is Hausdorff.

\begin{definition}
	Let $Y \subset \mathcal{G}^{(0)}$ be a clopen subset. The \index{reduction (groupoid)} \emph{reduction of $\gpoid$ to $Y$} is the subgroupoid of $\gpoid$ with object space $Y$ and morphism space $\{g \in \mathcal{G}^{(1)} \mid s(g) \in Y, r(g) \in Y\}$, both endowed with the subspace topology.
	We denote it by $\gpoid|_Y$.
\end{definition}

\begin{definition}
	Let $\mathcal{G}$ be an \'etale groupoid.
	A \emph{bisection} of $\mathcal{G}$ is a clopen subset $U \subset \mathcal{G}^{(1)}$
	such that $s|_U \colon U \to \mathcal{G}^{(0)}$ and $r|_U \colon U \to \mathcal{G}^{(0)}$ are homeomorphisms.
\end{definition}

\begin{definition}
	The \emph{topological full group} of an \'etale groupoid $\mathcal G$ is 
	$$\tfg(\mathcal{G}) := \big\{ r \circ (s|_U)^{-1} \in \operatorname{Homeo}(\mathcal{G}^{(0)}) \bigm| U \subset \mathcal G^{(1)} \text{ bisection of } \mathcal{G} \big\}.$$
	We leave to the reader to check that it is indeed a subgroup of $\operatorname{Homeo}(\mathcal{G}^{(0)})$.
\end{definition}

Recall that in Definition \ref{deftopfull} we already had the notion of a topological full group, namely of a group acting on a topological space. Given an action of a discrete group on the Cantor set, one can associate to it the so-called \emph{action groupoid}, and it turns out that the topological full group of the action groupoid coincides with the topological full group of the group action.
%

\subsection{One-sided shifts of finite type} \label{chshifts}

We refer to \cite{m15}, Section 6, for a more detailed treatment 
of shifts of finite type and the topological full group associated to them.

\begin{definition}
	Let $\mathfrak{g}=(\mathfrak{v},\mathfrak{e})$ be an oriented graph and $E' \subset E$ its orientation.
	The \emph{adjacency matrix} of $\mathfrak{g}$ is the matrix $M_\mathfrak{g} \in \mathbb{Z}^{\mathfrak{v} \times \mathfrak{v}}$ such that
	$M_\mathfrak{g}(v,w)=|\{e \in \mathfrak{e}' \mid i(e)=v,t(e)=w\}|$ for all $v,w \in \mathfrak{v}$.
\end{definition}

\paragraph{Assumptions on oriented graphs.}
Let $\mathfrak{g}=(\mathfrak{v},\mathfrak{e})$ be an oriented graph and $M_\mathfrak{g}$ its adjacency matrix.
We will always require two conditions on $g$ respectively $M_\mathfrak{g}$.
The first condition is that it must be irreducible, 
i.e. for all $v,w \in \mathfrak{v}$ there exists an $n$ such that $M_\mathfrak{g}^n(v,w)\neq 0$.
This is equivalent to saying that there exists a path of length $n$ from $v$ to $w$.
The second condition is that $M_\mathfrak{g}$ must not be a permutation matrix, 
which is equivalent to saying that $\mathfrak g$ is not a disjoint union of oriented cycles.

\paragraph{One-sided irreducible shifts of finite type.}
Let $$X_\mathfrak{g} = \big\{(e_k) \in (\mathfrak{e}')^{\mathbb N} \bigm| i(e_{k+1}) = t(e_k) \big\}$$ be the set of infinite oriented paths in $\mathfrak g$.
Note that $X_\mathfrak{g} \subset (\mathfrak{e}')^{\mathbb N}$ is closed. 
Moreover, that $M_\mathfrak{g}$ is irreducible and not a permutation matrix ensures that $X_\mathfrak{g}$ is a Cantor space.
Define the map $\sigma \colon X_\mathfrak{g} \to X_\mathfrak{g}$ by $\sigma(e)_k = e_{k+1}$. It is a local homeomorphism. 
The pair $(X_\mathfrak{g},\sigma)$ is called the \emph{one-sided irreducible shift of finite type associated to $\mathfrak g$}.

\paragraph{Associated groupoid.}
We associate to $(X_\mathfrak{g},\sigma)$ the following groupoid $\mathcal{G}_\mathfrak{g}$. 
The space of objects and morphisms are
\begin{align*}
\mathcal{G}_\mathfrak{g}^{(0)} &= X_\mathfrak{g} \\
\mathcal{G}_\mathfrak{g}^{(1)} &= \big\{(x,n-m,y) \in X_\mathfrak{g} \times \mathbb Z \times X_\mathfrak{g} \bigm| \sigma^n(x)=\sigma^m(y)\big\} \subset X_\mathfrak{g} \times \mathbb Z \times X_\mathfrak{g}.
\end{align*}
We endow $\mathcal{G}_\mathfrak{g}^{(1)}$ with the topology that is generated by all sets of the form $\{(x,n-m,y) \mid x \in U,\, y \in V,\, \sigma^n(x)=\sigma^m(y)\}$ with $U,V \subset X_\mathfrak{g}$ clopen.
The source and range maps are the projection on the last respecively first factor.
Two elements $(x,n-m,y)$ and $(y',m-l,z)$ are composable if and only if $y=y'$ and the product is $(x,n-m,y) \cdot (y,m-k,z) = (x,n-k,z)$. 
The unit space consists of all elements of the form $(x,0,x)$ and is homeomorphic to $X_\mathfrak{g}$ in an obvious way.
The inverse is given by $(x,n-m,y)^{-1} = (y,m-n,x)$. 

\begin{theorem}[\cite{m15}, Section 6] \label{shiftthm}
	Let $\mathfrak{g}$ be a finite oriented graph such that the associated
	adjacency matrix $M_{\mathfrak{g}}$ is irreducible and not a permutation matrix.
	Then, the topological full group $\tfg(\mathcal G_\mathfrak{g})$ is finitely presented 
	(more precisely, it is of type $F_\infty$).
	Moreover, every non-trivial subgroup of $\tfg(\mathcal G_\mathfrak{g})$ normalized by $D(\tfg(\mathcal G_\mathfrak{g}))$ contains $D(\tfg(\mathcal G_\mathfrak{g}))$.
	In particular $D(\tfg(\mathcal G_\mathfrak{g}))$ is simple.
	Its abelianization is isomorphic to
	\[ \tfg(\mathcal G_\mathfrak{g})/D(\tfg(\mathcal G_\mathfrak{g}))  \cong  \left( \operatorname{Coker}(id - M_{\mathfrak{g}}^t) \otimes_{\mathbb{Z}} \mathbb{Z}/2\mathbb{Z} \right) \oplus \operatorname{Ker}(id - M_{\mathfrak{g}}^t).\]
\end{theorem}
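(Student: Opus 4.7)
The plan is to treat the four assertions in sequence, adapting the strategy that Matui previously developed for AF groupoids to the one-sided shift of finite type setting. The four assertions are: type $F_\infty$ (hence finite presentation), the characterization of subgroups normalized by the derived subgroup, simplicity of $D([[\mathcal{G}_{\mathfrak g}]])$ (which follows immediately from the previous one by taking the normal subgroup to be $D([[\mathcal{G}_{\mathfrak g}]])$ itself), and the abelianization formula.

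For type $F_\infty$ I would construct an equivariant contractible CW complex on which $[[\mathcal{G}_{\mathfrak g}]]$ acts with finite stabilizers and finitely many orbits of cells in each dimension, then apply Brown's criterion. A natural model uses as cells chains of clopen partitions of $X_{\mathfrak g}$ by cylinder sets, ordered by refinement; irreducibility of $M_{\mathfrak g}$ ensures that any two partitions have a common refinement, which gives contractibility, while the fact that $M_{\mathfrak g}$ is not a permutation matrix provides the bounded geometry needed to control stabilizers. This mimics the Stein--Farley complexes used for Thompson-like groups.

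For the normal subgroup assertion, the key technical input is a small-support commutator decomposition: any element of $[[\mathcal G_{\mathfrak g}]]$ whose support is contained in a proper clopen subset $U \subsetneq X_{\mathfrak g}$ can be written as a product of commutators of elements of $D([[\mathcal{G}_{\mathfrak g}]])$ also supported in $U$. Given any nontrivial subgroup $N$ normalized by $D([[\mathcal{G}_{\mathfrak g}]])$, one uses density of orbits of $\sigma$ and conjugation by suitable bisections to produce a nontrivial element of $N$ with arbitrarily small support; applying the commutator lemma then forces $N \supset D([[\mathcal{G}_{\mathfrak g}]])$. Simplicity of $D([[\mathcal{G}_{\mathfrak g}]])$ is immediate.

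For the abelianization I would construct two natural homomorphisms out of $[[\mathcal{G}_{\mathfrak g}]]$. The \emph{index} homomorphism sends a bisection to the element of $\operatorname{Ker}(id - M_{\mathfrak g}^t)$ recording, vertex-by-vertex, the signed asymptotic count of cylinders whose base gets shifted by the homeomorphism; shift-invariance is exactly the eigenvalue equation $M_{\mathfrak g}^t x = x$. The \emph{signature} homomorphism refines the natural sign map by detecting the mod $2$ permutation induced on a sufficiently fine cylinder partition, and lands in $\operatorname{Coker}(id - M_{\mathfrak g}^t) \otimes \mathbb Z/2\mathbb Z$ because refining the partition along edges of $\mathfrak g$ adds elements in the image of $id - M_{\mathfrak g}^t$. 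Both maps are easily shown to be homomorphisms vanishing on commutators, and jointly surjective by exhibiting explicit witnesses (cyclic rotations on a pair of cylinders for the signature; shift-like bisections for the index). The hard direction, and the main obstacle, is showing that their common kernel is contained in $D([[\mathcal{G}_{\mathfrak g}]])$: this requires combining the small-support decomposition above with a careful bookkeeping argument showing that once both invariants vanish one can successively reduce the support until the element becomes a product of commutators of small-support elements.
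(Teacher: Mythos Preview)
The paper does not prove this theorem at all: it is quoted verbatim from Matui's work \cite{m15}, Section~6, and used as a black box. There is therefore no ``paper's own proof'' to compare against; the author simply imports the result and applies it to the specific graph $\mathfrak{g}_F$ in Theorem~\ref{thmvfistopfull} and Proposition~\ref{abprop}.

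Your outline is a plausible high-level reconstruction of how Matui's argument actually goes, and you have identified the correct ingredients: a Stein--Farley type complex for $F_\infty$, a small-support commutator lemma combined with minimality and the purely infinite property for the normal-subgroup statement, and the pair consisting of the index map (valued in the groupoid homology $H_1(\mathcal G_{\mathfrak g})\cong\operatorname{Ker}(id-M_{\mathfrak g}^t)$) and the mod-$2$ signature (valued in $H_0(\mathcal G_{\mathfrak g})\otimes\mathbb Z/2\mathbb Z\cong\operatorname{Coker}(id-M_{\mathfrak g}^t)\otimes\mathbb Z/2\mathbb Z$) for the abelianization. You are also right that the injectivity direction of the abelianization is the hard part. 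But none of this is carried out in the present paper, and turning your sketch into an actual proof is essentially reproducing a substantial portion of \cite{m15}; for the purposes of this paper the correct ``proof'' is simply the citation.
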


\subsection{A connection between shifts and almost automorphisms}

Recall from Subsection \ref{chht} that the group of locally order-preserving almost automorphisms of a tree depends on the choice of a plane order on the tree.
Also recall that two finite complete subtrees $T_1,T_2$ of $\tree$ and a bijection $\mathcal{L}T_1 \to \mathcal{L}T_2$ incude  a locally order-preserving almost automorphism.

\begin{definition}\label{defvell}
	Let $\tree=(\vertices,\edges)$ be a rooted tree with root $v_0 \in \vertices$.
	A \index{labelling} \emph{labelling} of $\tree$ is a finite set $\mathcal{D}$ together with a map $\ell \colon \vertices \setminus\{v_0\} \to \mathcal{D}$.
	Assume now that $\tree$ is endowed with a plane order.
	Let $T_1,T_2$ be complete finite subtrees of $\tree$ and let $\varphi \colon \tree \setminus T_1 \to \tree \setminus T_2$ be an honest almost automorphism of $\tree$. It is called \index{label-preserving} \emph{label-preserving} if and only if
	\begin{itemize}
		\item it is induced by the bijection $\varphi|_{\mathcal{L}T_1} \colon \mathcal{L}T_1 \to \mathcal{L}T_2$ and
		\item for every $v \in \mathcal{L}T_1$ holds $\ell(v)=\ell(\varphi(v))$.
	\end{itemize}
	We denote the set of all almost automorphisms of $\tree$ admitting a label-preserving representative by $V_\ell$.
\end{definition}

It is in general not true that $V_\ell$ is a subgroup of $\AAut(\tree)$,
since the property of being label-preserving does not survive passing to a simple expansion. It is, however, stable under inversion.

\begin{definition}
	With the notation as in Definition \ref{defvell}, say that $\ell$ is  \index{compatible (with plane order)} \emph{compatible with the plane order} if for all vertices $v,w \in \vertices \setminus \{v_0\}$ with $\ell(v)=\ell(w)$ the following holds.
	They have the same number of children, we denote them by $v_1 < \dots < v_k$ and $w_1 < \dots < w_k$ respectively, and $\ell(v_i)=\ell(w_i)$ for every $i=1,\dots,k$.
\end{definition}

\begin{lemma} \label{lemvellgroup}
	Let $\tree=(\vertices,\edges)$ be a rooted tree endowed with a plane order.
	Let $\ell \colon \vertices \setminus \{v_0\} \to \mathcal{D}$ be a labelling of $\tree$ compatible with the plane order.	
	Then $V_\ell$ is a subgroup of $\AAut(\tree)$.
\end{lemma}

\begin{proof}
	Clearly $V_\ell$ contains the identity element
	and is closed under inverting elements.
	Let $\varphi \colon \tree \setminus T_1 \to \tree \setminus T_2$ and
	$\psi \colon \tree \setminus T_3 \to \tree \setminus T_4$ be elements of $V_\ell$ with label-preserving representatives.
	Let $T$ be a finite complete subtree of $\tree$ containing $T_1$ and $T_4$, and let $T'$ be such that $\psi \colon \tree \setminus T' \to \tree \setminus T$ is another representative.
	Then $\varphi \circ \psi$ is induced by $(\varphi \circ \psi)|_{\mathcal{L}T'}$. Moreover, an easy induction shows that for all vertices $v$ of $\tree \setminus T'$ holds $\ell(v)=\ell(\varphi(\psi(v)))$. The result follows.
\end{proof}

We already know one rather trivial example. Namely, if $\tree$ is such that the root has $k$ children and all other vertices have $d$ children, and if $\ell$ is any constant map, then $V_\ell$ is the Higman-Thompson group $V_{d,k}$.

\begin{theorem}\label{thmalmauttoshift}
	Let $\tree=(\vertices,\edges)$ be a rooted tree with root $v_0 \in \vertices$.
	Assume $\tree$ is endowed with a plane order.
	Let $\mathcal{D}$ be a finite set.
	Let $\ell \colon \vertices \setminus \{v_0\} \to \mathcal{D}$ be a labelling compatible with the plane order.
	Assume in addition that $\tree$ is for every $\delta \in \mathcal{D}$ spanned by the set $\ell^{-1}(\delta)$.
	
	Then, the set $V_\ell$ is a subgroup of $\AAut(\tree)$.
	Moreover, there exists a diconnected and non-circular finite oriented graph $\frakg$ and a clopen subset $Y \subset \gpoid_\frakg^{(0)}$ such that $V_\ell \cong \tfg(\gpoid_\frakg|_Y)$.
\end{theorem}

\begin{remark} \label{remgraphconstr}
	Before going to the proof, we want to construct a suitable graph $\mathfrak{g}=(\mathfrak{v},\mathfrak{e})$ and give an informal motivation why it looks like that. Concrete examples will be given in Theorem \ref{thmnfshift}.
	
	The group $V_\ell$ acts by homeomorphisms on $\partial\tree$, which is a set of infinite paths.
	Similarly the group $\tfg(\mathcal G_{\mathfrak{g}}|_Y)$ acts by homeomorphisms on $Y \subset X_{\mathfrak{g}}$, which is also a set of infinite paths.
	We want to identify $\partial\tree$ and $Y$ in a way that identifies $V_\ell$ and $\tfg(\mathcal G_{\mathfrak{g}}|_Y)$.
	
	Let $v \neq v_0$ be a vertex of $\tree$.
	Consider the finite rooted geodesic with endpoint $v$.
	This path has as many possibilities to continue without backtracking as $v$ has children. The number of children of $v$ only depends on the label of $v$, and each of this children has itself a label, which may or may not be the same as $\ell(v)$.
	
	Implementing this simple observation into the directed graph $\frakg=(\mathfrak{v},\mathfrak{e})$ to be constructed, we declare that for every $\delta \in \mathcal{D}$ the graph $\mathfrak{g}$ has one vertex which we can identify with $\delta$, i.e. $\mathcal{D}\subset \mathfrak{v}$.
	Imitating the children of the vertex $v$ above, we want that the vertex $\ell(v)$ of $\mathfrak{g}$ has as many outgoing edges as $v$ has children, namely one with target $\delta$ for every child of $v$ with label $\delta$. Note that the children of $v$ that have the same label as $v$ will yield loops.
	
	This graph now almost does what we want, but there is still an issue, namely with the root of $\tree$.
	Among its children, some labels might appear several times, and others not at all.
	To deal with the first problem, let for every $\delta \in \mathcal{D}$ the number $l_\delta \in \mathbb{N}$ denote the number of children of $v_0$ with label $\delta$.
	If $l_\delta\geq 2$ we distribute $l_\delta-1$ additional vertices $\nu^{\delta}_{1},\dots,\nu^{\delta}_{l_\delta-1}$ arbitrarily on the incoming edges of $\delta$. This completes the (not completely determined) construction of $\frakg$.
	
	Summarizing, the graph we constructed above can be described and characterized as follows.
	\begin{enumerate}
		\item The set of vertices of $\frakg$ is $\mathfrak{v}=\mathcal{D} \cup \bigcup_{i=1}^{l_\delta-1} \{\nu^{\delta}_{i}\}$.
		\item Call a path starting and ending at an element of $\mathcal{D}$, but not passing through any element of $\mathcal{D}$, an \emph{interrupted edge}.
		\item For every $\delta, \delta' \in \mathcal{D}$ there are exactly as many interrupted edges starting at $\delta$ and ending in $\delta'$ as one, and hence every, vertex $v$ of $\tree$ of label $\delta$ has children with label $\delta'$.
		\item Two interrupted edges intersect at most in their start- and/or endpoint.
		\item Every $\nu^\delta_i$ lies on exactly one interrupted edge. This interrupted edge ends in $\delta$. In particular $\nu^\delta_i$ has incoming degree and outgoing degree equal to one.
		\item The set of edges of $\frakg$ we denote by $\mathfrak{e}$.
	\end{enumerate}
	
	%
	The second problem we solve by choosing the clopen set $Y \subset X_\frakg$ appropriately. Denote $\mathcal{D}_0:=\{\delta \in \mathcal{D} \mid l_\delta =0\}$.
	Then we define $Y$ as the set of all infinite oriented paths in $\frakg$ not starting at a $\delta \in \mathcal{D}_0$, i.e.
	$$Y := \{(e_0,e_1,\dots) \in \mathfrak{e}^\mathbb{N} \mid \forall i \geq 1 \colon o(e_i)=t(e_{i-1}), \, o(e_0) \notin \mathcal{D}_0\}.$$
	%
	%
	%
\end{remark}

\begin{proof}[Proof of Theorem \ref{thmalmauttoshift}]
	Let $\frakg=(\mathfrak{v},\mathfrak{e})$ and $Y$ be in Remark \ref{remgraphconstr}. We will also use other notations from there.
	Let $\mathcal{P}(\mathfrak{g})$ be the set of finite oriented paths of positive length in $\mathfrak{g}$ with starting point not in $\mathcal{D}_0$ and with endpoint in $\mathcal{D}$, i.e.
	$$\mathcal{P}(\mathfrak{g}) := \bigcup_{k \geq 0} \{(e_0,\dots,e_k) \in \mathfrak{e}^{k+1} \mid 
	o(e_i)=t(e_{i-1}), \, o(e_0) \notin \mathcal{D}_0, \, t(e_k) \in \mathcal{D}\}.$$
	The outline of the proof is the following.
	We first construct a tree $\tilde \tree$ with vertex set $\mathcal{P}(\mathfrak{g}) \cup \{\emptyset\}$,
	where we consider $\emptyset$ the root.
	The set of infinite paths $Y$ can be identified with the set of infinite paths $\partial \tilde \tree$ in an obvious way.
	We endow $\tilde \tree$ with a specific plane order.
	Then there exists a unique tree isomorphism $\omega \colon \tilde \tree \to \tree$ with $\omega(\emptyset)=v_0$ and preserving the order on the children of every vertex.
	The plane order on $\tilde \tree$ will have the property that for all $\gamma \in \mathcal{P}(\mathfrak{g})$ 
	holds $\ell(\omega(\gamma)) = t(\gamma)$, where $t(\gamma)$ denotes the terminal vertex of the oriented path $\gamma$.
	We will see that the elements of $\tfg(\mathcal{G}_{\mathfrak{g}}|_Y)$ can be written as precisely those almost automorphisms
	of $\tilde \tree$ which under $\omega$ correspond to elements of $V_\ell$.
	
	Let $\delta \in \mathcal{D}$.
	Let $\Omega^{\delta}$ be the set of interrupted edges starting at $\delta$.
	
	Now we are ready to construct $\tilde \tree$. As mentioned we declare $\emptyset$ to be the root of $\tilde \tree$.
	We define the set of children of $\emptyset$ to consist of the minimal paths in $\mathcal{P}(\frakg)$, i.e.
	$$\{(e_0,\dots,e_k) \in \mathcal{P}(\frakg) \mid e_i \in \mathfrak{e}, \,
	\forall i < k \colon   t(e_i) \notin \mathcal{D} \}.$$
	Now iteratively for every $\gamma \in \mathcal{P}(\mathfrak{g})$ with $t(\gamma)=\delta$ 
	the set of children of $\gamma$ is $$\big\{(\gamma,e) \bigm| e \in \Omega^{\delta} \big\}.$$
	Informally speaking every child of the path $\gamma$ is $\gamma$ continued one (possibly interrupted) step further. This completes the construction of $\tilde \tree$.
	
	Next we define a plane order on $\tilde \tree$, i.e. an order on the children of every vertex.
	Note that the root $\emptyset$ has for every $\delta \in \mathcal{D}$ exactly
	$l_\delta$ children with endpoint $\delta$, which is as many children as $v_0 \in \tree$ has with label $\delta$.
	Therefore there exists an order on the children of $\emptyset$ 
	such that the order-preserving bijection $\eta$ from the children of $\emptyset$ to the children of $v_0$ satisfies
	$\ell(\eta(\gamma)) = t(\gamma)$
	for every child $\gamma$ of $\emptyset$.
	Endow the set of children of $\emptyset$ with such an order.
	
	%
	To define an order on the children of the other vertices of $\tilde \tree$, we need a little preparation.
	For every $\delta \in \mathcal{D}$
	fix an arbitrary vertex $v^{\delta} \in \mathcal{V}$ with $\ell(v^{\delta}) =\delta$. It will now serve as reference vertex.
	Further choose a bijection $\zeta^{\delta}$ from $\Omega^{\delta}$ to the children of $v^{\delta}$
	with the property that $\ell(\zeta^{\delta}(\gamma)) = t(\gamma)$ for every $\gamma \in \Omega^{\delta}$.
	Recall that there is a total order on the children of $v^\delta$, since $\tree$ has a plane order.
	Going back to $\tilde \tree$, for every vertex $\gamma$ of $\tilde \tree$ which is not the root its set of children has the form $\{(\gamma,e) \mid e \in \Omega^{t(\gamma)}\}$.
	Let now $e,e' \in \Omega^{t(\gamma)}$. We say that $(\gamma,e) < (\gamma,e')$ if and only if $\zeta^{t(\gamma)}(e) < \zeta^{t(\gamma)}(e')$.
	
	Now we will show that elements of $\tfg(\mathcal{G}_{\mathfrak{g}}|_Y)$ are locally order-preserving almost automorphisms of $\tilde \tree$.
	There is an easy identification between the set of infinite paths $Y$ and the boundary $\partial \tilde \tree$. Namely, note that every element in $Y$ can be uniquely written as $(e_0,e_1,e_2,\dots)$, where $e_0$ is a minimal path in $\mathcal{P}(\frakg)$ and $e_i$ is an interrupted edge in $\Omega^{t(e_{i-1})}$ for every $i \geq 1$. Using this representation, $Y \to \partial \tilde \tree, (e_0,e_1,e_2,\dots) \mapsto (e_0,(e_0,e_1),(e_0,e_1,e_2),\dots)$ is a homeomorphism.
	
	Let $U \subset \mathcal{G}_{\mathfrak{g}}|_Y$ be a bisection.
	There exist clopen partitions $\{U_1,\dots,U_n\}$ and $\{U'_1,\dots,U'_n\}$ of $Y$ and 
	positive integers $n_1,\dots,n_n,m_1,\dots,m_n$ satisfying the following.
	The bisection $U$ can be written as $$U = \bigsqcup_{k=1}^n U_{k} \times \{n_k - m_k \} \times U'_k$$
	and $r \circ (s|_{U})^{-1}$ restricts to a homeomorphism $U_k \to U'_k$ for every $k=1,\dots,n$.
	By making the $U_i$ and $U'_i$ smaller if necessary, we can assume that for every $k=1,\dots,n$ there exist finite paths $\gamma_k=(e_{k0},\dots,e_{kn_k})$ and
	$\gamma_k'=(e'_{k0},\dots,e'_{km_k})$
	such that
	\begin{align*}
	U_k &= \{(e_i)_{i \in \mathbb{N}} \in Y \mid \forall 0 \leq i \leq n_k \colon e_i = e_{ki} \} \\
	U'_k &= \{(e'_i)_{i \in \mathbb{N}} \in Y \mid \forall 0 \leq i \leq m_k \colon e'_i = e'_{ki} \}
	\end{align*}
	and
	$r \circ (s|_{U})^{-1}(\gamma_k,e_{n_k+1},e_{n_k+2},\dots)
	= (\gamma_k',e_{n_k+1},e_{n_k+2},\dots).$
	%
	Note that since the vertices of $\mathfrak{g}$ of the form $\nu_j^{\delta}$ have
	precisely one outgoing edge we can assume that $t(e_{kn_k}),t(e'_{km_k}) \in \mathcal{D}$ for all $k=1,\dots,n$ without changing $U_k$ and $U'_k$.
	Then all $\gamma_k$ and $\gamma'_k$
	are vertices of $\tilde \tree$.
	Observe that for $k=0,\dots,n$ holds
	\begin{align*}
	U_k &= \partial \tilde \tree_{\gamma_k} \\
	U'_k &= \partial \tilde \tree_{\gamma'_k}.
	\end{align*}
	Since $\{U_1,\dots,U_n\}$ is a clopen partition of $Y$, there is a finite complete subtree $T \subset \tilde \tree$ with 
	$$\mathcal{L}T = \{\gamma_k \mid k = 1,\dots,n\}.$$
	In the same way there exists a finite complete subtree $T' \subset \tilde \tree$ with 
	$$\mathcal{L}T' = \{\gamma_k' \mid k = 1,\dots,n\}.$$
	The element $r \circ (s|_{U})^{-1}$ is then an almost automorphism $\tilde \tree \setminus T \to \tilde \tree \setminus T'$.
	It is such that for every vertex 
	$\tilde \gamma = (\gamma_k,e_{n_k+1},e_{n_k+2},\dots,e_{i'})$ 
	%
	of $\tilde \tree \setminus T$ holds 
	$$r \circ (s|_{U})^{-1}(\tilde \gamma)=(\gamma'_k,e_{n_k+1},e_{n_k+2},\dots,e_{i'}).$$
	Therefore this almost automorphism is locally order-preserving.
	This shows that every element of $\tfg(\mathcal{G}_{\mathfrak{g}}|_Y)$ is a locally order-preserving almost automorphism of $\tilde \tree$.
	
	Let now $n \geq 0$ and for $k=0,\dots,n$ let $\gamma_k=(e_{k0},\dots,e_{kn_k}) \in \mathcal{P}(\mathfrak{g})$ and
	$\gamma'_k=(e'_{k0},\dots,e'_{km_k}) \in \mathcal{P}(\mathfrak{g})$
	be finite paths such that for
	\begin{align*}
	U_k &= \{(e_i)_{i \in \mathbb{N}} \in Y \mid \forall 0 \leq i \leq n_k \colon e_i = e_{ki} \} \\
	U'_k &= \{(e'_i)_{i \in \mathbb{N}} \in Y \mid \forall 0 \leq i \leq m_k \colon e'_i = e'_{ki} \}
	\end{align*}
	the sets $\{U_1,\dots,U_n\}$ and $\{U'_1,\dots,U'_n\}$ are clopen partitions of $Y$.
	Then it is easy to see that that
	$\bigsqcup_{k=1}^n U_k \times \{n_k - m_k\} \times U'_k$
	is a bisection of $\gpoid_\frakg|_Y$ if and only if $t(\gamma_k)=t(\gamma_k')$
	for every $k$.
	
	Let, on the other hand, $T,T'\subset \tilde \tree$ be finite complete subtrees and let $\varphi \colon \tilde \tree \setminus T \to \tilde \tree \setminus T'$ be an honest almost automorphism satisfying the following. 
	It is locally order-preserving and for all $\gamma \in \mathcal{L}T$ holds $t(\gamma)=t(\varphi(\gamma))$.
	Repeating above argument in reverse shows that
	the sets of leaves $\mathcal{L}T$ and $\mathcal{L}T'$ define two compact and open partitions of $\partial \tilde \tree$.
	These partitions together with the bijection $\varphi|_{\mathcal{L}T} \colon \mathcal{L}T \to \mathcal{L}T'$ can also be interpreted as a bisection of $\mathcal{G}_{\mathfrak{g}}|_Y$. This gives rise to an element of $\tfg(\mathcal{G}_{\mathfrak{g}}|_Y)$ looking like $\varphi$.
	
	Let $\omega \colon \tilde \tree \to \tree$ 
	be the unique tree isomorphism with $\omega(\emptyset)=v_0$ and 
	preserving the order on the children on every vertex. 
	It has the following property. 
	By definition of the plane order on $\tilde \tree$,
	for every $\gamma \in \mathcal{P}(\mathfrak{g})$ holds
	$\ell(\omega(\gamma)) = t(\gamma)$.
	Therefore, the almost automorphisms in
	$\tfg(\mathcal{G}_{\mathfrak{g}}|_Y)$ and the almost automorphisms in $V_\ell$ exactly correspond to each other under $\omega$.
\end{proof}

\subsection{A subgroup of  $\tfg(U(F))$}

Let as before $d \geq 2$ be an integer and $D:=\{0,\dots,d\}$. In this section $\tree$ is a $(d+1)$-regular tree. Let $F \leq \Sym(D)$ be any subgroup. We use notation and definitions from Subsection \ref{burgermozesgroups}.

\begin{notation}
	From now on we will write $\UF := \mathcal{F}(U(F))$.
\end{notation}

In this section we investigate a certain subgroup $V_F \leq \UF$ that 
plays a role analogous to the Higman-Thompson group inside Neretin's group.
Theorem \ref{thmalmauttoshift} will give that it is isomorphic to the topological full group of a shift of finite type.
Consequently, by results of Matui \cite{m15},
it is finitely presented and its commutator subgroup $D(V_F)$ is simple.

\begin{notation}
	We denote the orbits of $F$ by $D^{(0)},\dots,D^{(l)} \subset D$.
	For each $i=0,\dots,l$ we write $d^{(i)} := |D^{(i)}|$.
\end{notation}

\subsubsection{A plane order on $\tree$} \label{chvf}

Let $T_1, T_2$ be finite complete subtrees of $\tree$ and let
$\varphi \colon \tree \setminus T_1 \to \tree \setminus T_2$ be an arbitrary $U(F)$-honest almost automorphism.
Then $\varphi|_{\mathcal{L}T_1} \colon \mathcal{L}T_1 \to \mathcal{L}T_2$ is a bijection
such that for every $v \in \mathcal{L}T_1$ the colours of the parent edges of $v$ and $\varphi(v)$
are in the same orbit of $F$.
This motivates the following definition of a labelling on $\tree$.

\begin{definition} \label{deflabellinguf}
	Let $\ell_{F} \colon \vertices \setminus \{v_0\} \to \{D^{(0)},\dots,D^{(l)}\}$ be defined as follows.
	Let $v \neq v_0$ be a vertex of $\tree$ and $e$ its parent edge. Then $\ell_{F}(v)$ is the $F$-orbit of $\col(e)$.
\end{definition}

%

Now we construct a plane order on $\tree$ such that we can apply the results of the preceding subsection to investigate the set of all label-preserving almost automorphisms $V_{\ell_{F}}$.
%
%

\begin{proposition} \label{propord}
	There exists a plane order on $\mathcal{T}$ with $V_{\ell_{F}} = V_{d,d+1} \cap \UF$.
	
	More precisely, there exists a plane order on $\tree$ such that
	every label-preserving honest almost automorphism is an
	$U(F)$-honest almost automorphism.
\end{proposition}

\begin{proof}
	As we will now see, the inclusion $V_{\ell_{F}} \supset V_{d,d+1} \cap \UF$ holds independently of the plane order.
	Let $\psi \in V_{d,d+1} \cap \UF$.
	Then there exist finite complete subtrees $T_1,T_2 \subset \tree$
	such that $\psi \colon  \tree \setminus T_1 \to \tree \setminus T_2$ 
	is an $U(F)$-almost automorphism which is
	as element of $V_{d,d+1}$ induced by the bijection
	$\psi|_{\mathcal{L}T_1} \colon \mathcal{L}T_1 \to \mathcal{L}T_2$.
	It is enough to show that the bijection $\psi|_{\mathcal{L}T_1}$ enjoys the following property.
	For all $v \in \mathcal{L}T_1$ and for all elements $g \in U(F)$
	such that $\psi|_{\tree_v}=g|_{\tree_v}$
	holds $\ell_F(v)=\ell_F(gv)$.
	But note that this is true because for the parent edge $e$ of $v$ holds $\prm_{g,v}(\col(e))=\col(g(e))$.
	
	For the reverse inclusion we first specify a plane order on $\mathcal{T}$, i.e. a total order $<_v$ on the children of every vertex $v \in \mathcal{V}$, and then prove that it has the desired property.
	Denote for every vertex $v' \in \mathcal{V}\setminus\{v_0\}$ its parent edge by $e_{v'}$.
	
	For children of $v_0$ we say that $v <_{v_0} w$ if and only if $\col(e_v) < \col(e_w)$.
	For all other vertices $v \in \mathcal{V}$ the order $<_v$ will be determined by $\col(e_v)$.
	Choose for each $i=0,\dots,l$ a colour $\chi^{(i)} \in D^{(i)}$, which we will now use as ``reference colour''.
	Let $v$ be any vertex with $\col(e_v)=\chi^{(i)}$ and let $v_1,v_2$ be children of $v$.
	Say $v_1 <_{v} v_2$ if and only if $\col(e_{v_1}) < \col(e_{v_2})$.
	Let now $\chi \in D \setminus \{\chi^{(0)},\dots \chi^{(l)}\}$ and let $i$ be such that $\chi \in D^{(i)}$. We will now compare with the reference colour.
	Choose an element $f_{\chi} \in F$ with $f_\chi(\chi) = \chi^{(i)}$, and set $f_{\chi^{(i)}}=id$.
	Then $f_\chi$ restricts to a bijection $D \setminus \{\chi\} \to D \setminus \{\chi^{(i)}\}$.
	Let $v$ be any vertex with $\col(e_v)=\chi$ and let $v_1,v_2$ be children of $v$.
	Say $v_1 <_{v} v_2$ if and only if $f_{\chi}(\col(e_{v_1})) < f_{\chi}(\col(e_{v_2}))$.
	
	To verify that this plane order has the desired property, we first prove that
	$V_{\ell_{F}} \subset V_{d,d+1} \cap \UF$.
	Let $T_1$ and $T_2$ be two finite complete subtrees of $\tree$ and let $\varphi \colon \tree \setminus T_1 \to \tree \setminus T_2$ be a label-preserving honest almost automorphism.
	Let $v$ be a vertex of $\tree \setminus T_1$ and
	let $g \in \AutT$ be such that $\psi|_{\tree_v}=g|_{\tree_v}$. We have to prove that $\prm_{g,v} \in F$. We do this in three steps.
	
	\emph{Step 1:} The vertex $v$ is a leaf of $T_1$.
	
	By Lemma \ref{lemextuf} there exists an element $h \in U(F)$ with $\col(e_{hv})=\chi^{(i)}$ and $\prm_{h,v}=f_{\col(e_v)}$.
	Then by Lemma \ref{lemmultprm} we have
	$\prm_{g,v} = \prm_{gh^{-1}h,v} = \prm_{gh^{-1},hv} \circ \prm_{h,v}=
	f_{\col(e_{gv})}^{-1}\circ f_{\col(e_v)} \in F$.
	
	\emph{Step 2:} The vertex $v$ is a child of a leaf of $T_1$.
	
	Let $T'_1$ denote the simple expansion of $T_1$ such that $v \in \mathcal{L}T'_1$.
	Similarly denote by $T'_2$ the simple expansion of $T_2$ such that
	$gv \in \mathcal{L}T'_2$.
	From Step~1 follows in particular that for all leaves $w$ of $T_1'$
	the colours $\col(e_w)$ and $\col(e_{gw})$ lie in the same orbit of $F$, and therefore $\ell_F(w)=\ell_F(gw)$.
	Note that $\varphi$ is also induced by the bijection
	$\varphi|_{\mathcal{L}T'_1} \colon \mathcal{L}T'_1 \to \mathcal{L}T'_2$.
	Thus the restriction $\varphi|_{\tree \setminus T_1'} \colon \tree \setminus T_1' \to \tree \setminus T_2'$
	is a label-preserving honest almost automorphism.
	Now we can repeat the argument from Step~1 for
	$\varphi$ replaced by $\varphi|_{\tree \setminus T_1'}$ and
	get that $\prm_{g,v} \in F$.
	
	\emph{Step 3:} The vertex $v$ is a descendant of a vertex of $T_1$.
	
	Recall that every finite complete subtree containing $T_1$ is obtained from $T_1$ by
	a finite sequence of simple expansions. Therefore we iteratively get $\prm_{g,v} \in F$.
	This means that indeed $V_{\ell_{F}} \subset V_{d,d+1} \cap \UF$.
\end{proof}

\begin{notation}
	Henceforth we abbreviate $V_F := V_{\ell_F}$.
\end{notation}

\begin{example}
	Consider the $4$-regular tree from Figure \ref{figemb}.
	Its plane order is implied by how the tree is drawn from left to right, namely,
	for any children $v_1,v_2$ of a vertex $v$ holds $v_1 <_v v_2$ if and only if $v_1$ is drawn to the left of $v_2$.
	
	Let $F = \langle (12) \rangle \leq S_4$.
	Let $T_1=T_2$ be the finite complete subtree whose leaves are the children of $v_0$.
	Then there exists an element in $\UF$ (even in $U(F)$) switching the vertices of label $\{1,2\}$, i.e. the children of $v_0$ whose parent edge have colour $1$ and $2$.
	Note, however, that the element of $V_{d,d+1}$ induced by this permutation of $\mathcal{L}T_1$ is not an element of $\UF$.
	
	Let now $F=\langle (1 \, 2 \, 3) \rangle$.
	Then $V_F = V_{d,d+1} \cap \UF$ holds with the drawn order.
\end{example}

%
%
%
%

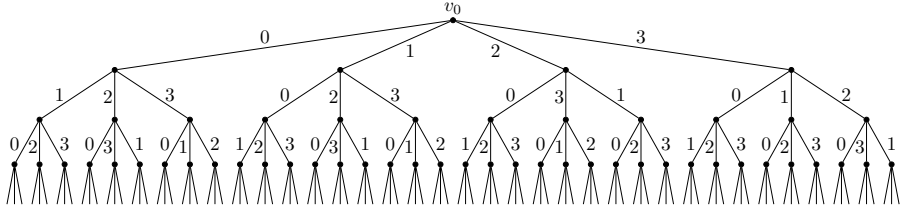
\begin{figure}[H]
	\centering
	\resizebox{120mm}{!}{ 
		\begin{tikzpicture}[font=\normalsize]  [grow'=down]
		
		\tikzstyle{solid node}=[circle,draw,inner sep=1,fill=black];
		\tikzstyle{level 1}=[level distance=0mm,sibling distance=0mm]
		\tikzstyle{level 2}=[level distance=10mm,sibling distance=45mm]
		\tikzstyle{level 3}=[level distance=10mm,sibling distance=15mm]
		\tikzstyle{level 4}=[level distance=9mm,sibling distance=5mm]
		\tikzstyle{level 5}=[level distance=8mm,sibling distance=1.5mm]
		
		\coordinate
		child foreach \A in {1}
		{node(0)[solid node]{} child foreach \B in {red,green,blue,gray}
			{node(1)[solid node]{} child foreach \C in {black,gray,white}
				{node(2)[solid node]{} child foreach \C in {black,gray,white} 
					{node(3)[solid node]{} child foreach \C in {black,gray,white}  }
				}
		}};
		
		\node[above,xshift=0mm]at(0){$v_0$};
		
		\node[above,xshift=-30mm,yshift=4mm]at(1){3};
		\node[above,xshift=-59mm,yshift=1.3mm]at(1){2};
		\node[above,xshift=-76mm,yshift=1.3mm]at(1){1};
		\node[above,xshift=-105mm,yshift=4mm]at(1){0};
		
		\node[above,xshift=-4mm,yshift=2.5mm]at(2){2};
		\node[above left,xshift=-14mm,yshift=2mm]at(2){1};
		\node[above,xshift=-26mm,yshift=2.5mm]at(2){0};
		
		\node[above left,xshift=-104mm,yshift=2mm]at(2){2};
		\node[above,xshift=-94mm,yshift=2.5mm]at(2){3};
		\node[above,xshift=-116mm,yshift=2.5mm]at(2){0};
		
		\node[above,xshift=-49mm,yshift=2.5mm]at(2){1};
		\node[above left,xshift=-59mm,yshift=2mm]at(2){3};
		\node[above,xshift=-71mm,yshift=2.5mm]at(2){0};
		
		\node[above,xshift=-139mm,yshift=2.5mm]at(2){3};
		\node[above left,xshift=-149mm,yshift=2mm]at(2){2};
		\node[above,xshift=-161mm,yshift=2.5mm]at(2){1};
		
		\node[above,xshift=0mm,yshift=1.5mm]at(3){1};
		\node[above left,xshift=-4mm,yshift=1mm]at(3){3};
		\node[above,xshift=-10mm,yshift=1.5mm]at(3){0};
		
		\node[above,xshift=-15mm,yshift=1.5mm]at(3){3};
		\node[above left,xshift=-19mm,yshift=1mm]at(3){2};
		\node[above,xshift=-25mm,yshift=1.5mm]at(3){0};
		
		\node[above,xshift=-30mm,yshift=1.5mm]at(3){3};
		\node[above left,xshift=-34mm,yshift=1mm]at(3){2};
		\node[above,xshift=-40mm,yshift=1.5mm]at(3){1};
		
		\node[above,xshift=-45mm,yshift=1.5mm]at(3){3};
		\node[above left,xshift=-49mm,yshift=1mm]at(3){2};
		\node[above,xshift=-55mm,yshift=1.5mm]at(3){0};
		
		\node[above,xshift=-60mm,yshift=1.5mm]at(3){2};
		\node[above left,xshift=-64mm,yshift=1mm]at(3){1};
		\node[above,xshift=-70mm,yshift=1.5mm]at(3){0};
		
		\node[above,xshift=-75mm,yshift=1.5mm]at(3){3};
		\node[above left,xshift=-79mm,yshift=1mm]at(3){2};
		\node[above,xshift=-85mm,yshift=1.5mm]at(3){1};
		
		\node[above,xshift=-90mm,yshift=1.5mm]at(3){2};
		\node[above left,xshift=-94mm,yshift=1mm]at(3){1};
		\node[above,xshift=-100mm,yshift=1.5mm]at(3){0};
		
		\node[above,xshift=-105mm,yshift=1.5mm]at(3){1};
		\node[above left,xshift=-109mm,yshift=1mm]at(3){3};
		\node[above,xshift=-115mm,yshift=1.5mm]at(3){0};
		
		\node[above,xshift=-120mm,yshift=1.5mm]at(3){3};
		\node[above left,xshift=-124mm,yshift=1mm]at(3){2};
		\node[above,xshift=-130mm,yshift=1.5mm]at(3){1};
		
		\node[above,xshift=-135mm,yshift=1.5mm]at(3){2};
		\node[above left,xshift=-139mm,yshift=1mm]at(3){1};
		\node[above,xshift=-145mm,yshift=1.5mm]at(3){0};
		
		\node[above,xshift=-150mm,yshift=1.5mm]at(3){1};
		\node[above left,xshift=-154mm,yshift=1mm]at(3){3};
		\node[above,xshift=-160mm,yshift=1.5mm]at(3){0};
		
		\node[above,xshift=-165mm,yshift=1.5mm]at(3){3};
		\node[above left,xshift=-169mm,yshift=1mm]at(3){2};
		\node[above,xshift=-175mm,yshift=1.5mm]at(3){0};
		\end{tikzpicture}
	}
	\caption{The left-to-right drawing specifies an order on the tree as in
		Proposition \ref{propord} for $F=\langle (1,2,3) \rangle < S_4$.}
	\label{figemb}
\end{figure}
%

\begin{theorem} \label{thmnfshift}
	There exists a diconnected, non-circular, finite, oriented graph $\frakg$ such that $V_F$ is isomorphic to $\tfg(\gpoid_{\frakg})$.
	
	More precisely, let $\frakg_F$ be the following graph.
	Let $\mathcal{D} := \{D^{(0)},\dots,D^{(l)}\}$.
	The vertex set of $\frakg_F$ is the union $\mathfrak{v}_F = \mathcal{D} \cup \bigcup_{i=0}^l \{\nu_1^{(i)},\dots,\nu_{d^{(i)}-1}^{(i)}\}$, where the $\nu_j^{(i)}$ are arbitrary pairwise different elements.
	The adjacency matrix in $\mathbb{Z}^{\mathfrak{v}_F \times \mathfrak{v}_F}$ is defined to be
	\[
	M_{\mathfrak{g}_F}(v,w) = \begin{cases}
	d^{(i)} & v = D^{(j)}, \, w=D^{(i)}, \, i \neq j \\
	1 & v=D^{(i)}, \, w = \nu_1^{(i)},\dots,\nu_{d^{(i)}-1}^{(i)} \\
	1 & v=\nu_1^{(i)},\dots,\nu_{d^{(i)}-1}^{(i)}, \, w = D^{(i)} \\
	0 & \text{else.}           
	\end{cases}
	\]
	Then $V_F \cong \tfg(\gpoid_{\frakg_F})$.
\end{theorem}

\begin{figure}[H]
	\centering
	\scalebox{1}{
		\begin{tikzpicture}[->,>=stealth',shorten >=1pt,auto,node distance=2.8cm,
		semithick]
		
		\node[state]         (A)                    {$D^{(0)}$};
		\node[state]         (B) [below left  of=A] {$D^{(1)}$};
		\node[state]         (C) [below right of=A] {$D^{(2)}$};
		\node[state]        (D) [left  of=B,inner sep=0pt,minimum size=0pt,node distance=1.5cm]       {$\nu_1^{(1)}$};
		\node[state]        (E) [below of=B,inner sep=0pt,minimum size=0pt,node distance=1.5cm]       {$\nu_2^{(1)}$};
		\node[state]        (F) [right of=C,inner sep=0pt,minimum size=0pt,node distance=1.5cm]       {$\nu_1^{(2)}$};
		
		\path (A) edge [bend right=10]     (B)
		edge [bend right=20]     (B)
		edge [bend right=30]     (B)
		edge [bend left=10]    (C)
		edge [bend left=20]    (C)
		(B) edge [bend left=10]     (C)
		edge [bend left=20]     (C)
		edge [bend right=10]    (A)  
		(C) edge [bend left=10]     (B)
		edge [bend left=20]     (B)
		edge [bend left=30]     (B)
		edge [bend left=10]    (A)
		(B) edge [bend right]  (D)
		(D) edge [bend right] (B)
		(B) edge [bend right]  (E)
		(E) edge [bend right] (B)
		(C) edge [bend left]  (F)
		(F) edge [bend left] (C);
		\end{tikzpicture}}
	\caption{The graph $\mathfrak{g}_F$ for $F < S_6$ with $(d^{(0)},d^{(1)},d^{(2)})=(1,3,2)$}
\end{figure}
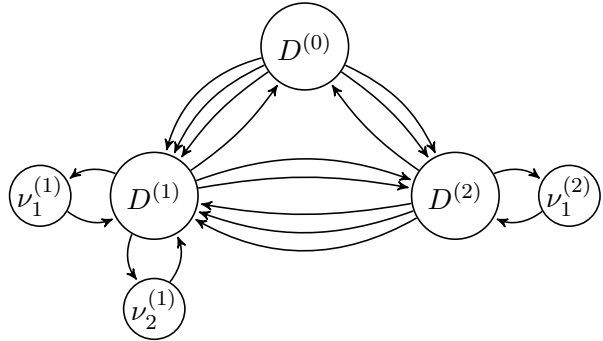

\begin{proof}
	This follows directly from Theorem \ref{thmalmauttoshift} since the graph $\frakg_F$ is exactly as in Remark \ref{remgraphconstr}.
\end{proof}

\begin{remark}
	It might seem surprising at first sight that the dense subgroup $V_F$ only depends on the size of the orbits of $F$.
	However, this is precisely what was already known in the transitive case. If $F$ is transitive, then $\UF$ is a group introduced by Caprace and De Medts in \cite{cm11} which in the literature is usually denoted as $\AAut_D(\tree_{d,2})$, a good introduction to it is Le Boudec's article \cite{lb17}. The group $V_F$ in this case is the Higman-Thompson group $V_{d,2}=V_{d,d+1}$.
\end{remark}

\section{Compact generation and virtual simplicity} \label{chspl}

Let $F \leq \Sym(D)$ be any subgroup.
In this section we prove that $\UF$ is compactly generated and that $D(\UF)$ is open, simple and has finite index in $\UF$.
Compact generation is a direct consequence of the theorem below.
The statement is the analog to saying that Neretin's group contains a dense copy of a Higman-Thompson group.

\begin{theorem} \label{cpctgenthm}
	The finitely generated group $V_F$ is dense in $\UF$.
\end{theorem}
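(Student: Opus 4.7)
The plan is to obtain finite generation of $V_F$ straight from Theorem \ref{thmvfistopfull} together with Matui's Theorem \ref{shiftthm}, so only the density claim requires genuine work. Since $U(F)$ is open in $\UF$, the family $\{\varphi \Fix_{U(F)}(T)\}_T$, where $T$ ranges over finite complete subtrees of $\tree$, is a neighbourhood basis of any $\varphi \in \UF$. Hence it suffices to produce, for every such $\varphi$ and $T$, an element $\psi \in V_F$ with $\varphi^{-1}\psi \in \Fix_{U(F)}(T)$.

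I would construct $\psi$ as follows. Using Remark \ref{equivclassrmk} pick a $U(F)$-honest representative $\varphi\colon \tree \setminus T_1 \to \tree \setminus T_2$ with $T_1 \supseteq T$; by Lemma \ref{lemorbrespgf} the bijection $\kappa := \varphi|_{\mathcal{L}T_1}$ respects the $F$-orbits of colours. Let $\psi\colon \tree \setminus T_1 \to \tree \setminus T_2$ be the unique locally order-preserving honest almost automorphism inducing $\kappa$ (Remark \ref{htdetermindrem}); Definition \ref{defvf} combined with Proposition \ref{propord} then places $\psi$ in $V_F$. The composition $\varphi^{-1}\psi$ is represented by a forest isomorphism $\tree \setminus T_1 \to \tree \setminus T_1$ fixing every leaf of $T_1$, and so extends to a tree automorphism $h$ of $\tree$ which is the identity on $T_1$ and coincides with $g_v^{-1}\tilde g_v$ on each $\tree_v$, where $g_v, \tilde g_v \in U(F)$ are chosen extensions of $\varphi|_{\tree_v}$ and $\psi|_{\tree_v}$, respectively.

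The only non-routine step is the verification that $h$ lies in $U(F)$, i.e.\ that $\prm_{h, w} \in F$ at every vertex $w$. At interior vertices of $T_1$ the permutation is the identity, and at vertices of $\tree_v \setminus \{v\}$ it coincides with the local action of $g_v^{-1}\tilde g_v \in U(F)$; so both cases are automatic. The critical case is a leaf $v \in \mathcal{L}T_1$: the edge from $v$ to its parent lies in $T_1$ and is fixed by $h$, so $\prm_{h, v}$ fixes $\col(v)$ and on $D \setminus \{\col(v)\}$ equals the restriction of $\prm_{g_v^{-1}\tilde g_v, v}$. The hard part, and the observation I expect to drive the proof, is that any $\bar g \in U(F)$ whose restriction to $\tree_v$ equals the tree isomorphism $\tree_v \to \tree_{\varphi(v)}$ is forced to carry the unique edge at $v$ leaving $\tree_v$ onto the unique edge at $\varphi(v)$ leaving $\tree_{\varphi(v)}$, so $\prm_{\bar g, v}(\col(v)) = \col(\varphi(v))$. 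Applying this to both $g_v$ and $\tilde g_v$ and composing via Lemma \ref{lemmultprm} gives $\prm_{g_v^{-1}\tilde g_v, v}(\col(v)) = \col(v)$, so $\prm_{h, v}$ is the full permutation $\prm_{g_v^{-1}\tilde g_v, v} \in F$. This yields $h \in \Fix_{U(F)}(T_1) \subseteq \Fix_{U(F)}(T)$, completing the density argument.
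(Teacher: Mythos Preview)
Your approach is essentially the paper's: pick a $U(F)$-honest representative of $\varphi$ with $T_1 \supseteq T$, let $\psi$ be the element of $V_F$ induced by $\varphi|_{\mathcal{L}T_1}$, and check that $\psi^{-1}\varphi$ (equivalently your $\varphi^{-1}\psi$) lies in $\Fix_{U(F)}(T)$. The paper compresses this last step into a single sentence (``we observe that $\psi^{-1}\varphi \in \Fix_{U(F)}(T)$''), implicitly relying on Proposition~\ref{propord} and the Tits' Independence Property argument already used in Proposition~\ref{proptopexists}; you have unpacked exactly what that observation requires, namely the leaf-vertex check that $\prm_{g_v^{-1}\tilde g_v,v}$ fixes $\col(v)$.

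One minor correction: Lemma~\ref{lemorbrespgf} is stated only for elements of $\mathcal{F}(U(F)) \cap \AutT$, so it does not literally apply to your $\varphi$. The fact you need---that a $U(F)$-honest representative sends each leaf to a leaf of the same colour orbit---is instead the remark immediately preceding Definition~\ref{defvf} (and is reproved inside Proposition~\ref{propord}); it follows from precisely the observation you label the ``hard part'', so you already have the argument in hand and only need to adjust the citation.
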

%

\begin{proof}
	Let $T \subset \tree$ be an arbitrarily big finite complete subtree.
	We need to prove that for every element $\varphi \in \UF$ there exists an element $\psi \in V_F$ such that $\psi^{-1} \varphi \in \Fix_{U(F)}(T)$.
	Let $T_1,T_2 \subset \tree$ be two finite complete subtrees and
	let $\varphi \colon \tree \setminus T_1 \to \tree \setminus T_2$ be an $U(F)$-honest almost automorphism.
	By Remark \ref{equivclassrmk} we can assume that $T_1 \supset T$.
	Then $\varphi$ restricts to a bijection $\mathcal{L}T_1 \to \mathcal{L}T_2$ which induces an element 
	$\psi \colon \tree \setminus T_1 \to \tree \setminus T_2$ in $V_{d,d+1}$ 
	such that $\psi|_{\mathcal{L}T_1 } = \varphi|_{\mathcal{L}T_1 }$.
	Proposition \ref{propord} implies that $\psi \in V_F$.
	In addition we observe that
	$\psi^{-1} \varphi \in \Fix_{U(F)}(T)$, which concludes the proof.
\end{proof}

\begin{definition}
	Let a group $\Lambda$ act on a topological space $X$. The action is called \emph{minimal} if every orbit is dense.
\end{definition}

\begin{definition}
	Let a group $\Lambda$ act on the Cantor space $X$. The action is called \emph{purely infinite} if for every nonempty compact open subset $U \subsetneq X$ there exist $g,h \in \Lambda$ such that $g(U) \cup h(U) \subset U$ and $g(U) \cap h(U) = \emptyset$.
\end{definition}

\begin{theorem}[\cite{m15}, Theorem 4.16] \label{thmmatuispl}
	Let a group $\Lambda$ act minimally on the Cantor space such that the action is purely infinite. The commutator subgroup of the topological full group of this action is simple.
\end{theorem}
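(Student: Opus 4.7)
The plan is to show that every nontrivial subgroup $N \trianglelefteq D([[\mathcal G]])$ coincides with $D([[\mathcal G]])$, which gives simplicity. The strategy splits into two pieces: first, a ``localization'' principle stating that any nontrivial $N$ already contains some nontrivial element whose support is contained in an arbitrarily prescribed nonempty clopen set; second, a ``generation'' principle stating that $D([[\mathcal G]])$ is generated by its elements of very small support. Combining these, a normal subgroup containing one small-support element will, after conjugation and taking commutators, contain every generator of $D([[\mathcal G]])$.

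For the localization principle, I start from any nontrivial $g \in N$ acting on the Cantor space $X$. Pick a clopen $U \subset X$ such that $g(U) \cap U = \emptyset$; such $U$ exists because $[[\mathcal G]]$ acts faithfully on $X$ and $X$ is Hausdorff totally disconnected. By pure infiniteness applied to $U$, there is $h \in [[\mathcal G]]$ with $h(U) \subsetneq U$ and hence a $t \in [[\mathcal G]]$ mapping $U$ into any prescribed clopen subset $V \subset U$. The conjugate $tgt^{-1}$ is then supported in $U$ with disjoint pieces, and forming the commutator $[tgt^{-1}, s]$ for a suitable $s \in [[\mathcal G]]$ with support inside $V$ produces a nontrivial element of $N$ (normality is used here) whose support is driven into a controlled clopen set. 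Minimality then lets one translate such small-support witnesses into any desired region of $X$, since the orbit of any point is dense.

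For the generation principle, the aim is to write an arbitrary element $f \in D([[\mathcal G]])$ as a product of commutators of elements of small support. A typical element of $[[\mathcal G]]$ admits a canonical decomposition as a finite ``piecewise'' gluing of $\mathcal G$-local homeomorphisms along a clopen partition of $X$; using pure infiniteness one can refine this partition so each piece is supported in a small clopen set and is conjugate into any prescribed comparable clopen set. Standard manipulations, analogous to expressing a permutation as a product of three-cycles and rewriting each three-cycle as a commutator of transpositions with disjoint support, then show $D([[\mathcal G]])$ is generated by commutators supported in arbitrarily small clopen sets. Combined with the localization step, $N$ contains all such generators, hence $N = D([[\mathcal G]])$.

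The hard part will be the localization lemma, since one must carefully orchestrate the interplay between pure infiniteness (which produces compressions of clopen sets) and minimality (which produces translations between clopen sets), while tracking what the resulting commutators do. In particular, getting the small-support element to actually lie in $N$ requires that the witnessing conjugator and partner be chosen inside $[[\mathcal G]]$ (not just in the ambient homeomorphism group) and that the commutator remains nontrivial, which is the delicate step. The generation lemma, by contrast, is largely combinatorial once one commits to the clopen refinement coming from pure infiniteness.
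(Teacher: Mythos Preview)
The paper does not give its own proof of this theorem: it is quoted as Theorem~4.16 of Matui~\cite{m15}, and the only additional content is the remark that Matui's hypotheses of countability and essential freeness are not actually used in his argument. So there is no in-paper proof to compare against.

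That said, your outline is in the spirit of how Matui's proof (and, more generally, simplicity proofs for full groups going back to Higman and Epstein) actually runs: one shows that $D([[\mathcal G]])$ is generated by elements of arbitrarily small clopen support, and that any nontrivial normal subgroup of $D([[\mathcal G]])$ contains a nontrivial element of arbitrarily small support, hence contains all generators. Pure infiniteness supplies the compressions needed to shrink supports, and minimality supplies the transitivity needed to move small-support elements around.

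As written, however, your sketch has real gaps rather than just omitted routine details. In the localization step you take $t \in [[\mathcal G]]$ with $t(U)\subset V$ and form $[tgt^{-1},s]$, but $tgt^{-1}$ is not supported in $U$ (it is supported in $t(\operatorname{supp} g)\,t^{-1}$, and $g$ moves points outside $U$ as well), so the claimed control on the support does not follow. One has to work instead with a commutator of the form $[g,h]$ where $h$ is supported in $U$, so that $[g,h]$ is supported in $U\cup g(U)$, and then iterate; getting this element to lie in $N$ and to be nontrivial is exactly the delicate point, and it requires $h\in D([[\mathcal G]])$, not just $h\in[[\mathcal G]]$. In the generation step the ``three-cycle'' heuristic is the right intuition, but making it precise requires Matui's analysis of the groupoid (bisections, the structure of $[[\mathcal G]]$ as an increasing union, and his Lemma~4.1/4.2 machinery), none of which you have set up. So your proposal is a correct high-level plan, but not yet a proof.
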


\begin{remark}
	In the article where this theorem is stated, it is assumed that $G$ is countable and
	the action is essentially free. However, a close inspection of the proof shows that
	these two assumptions are not used. I am grateful to Hiroki Matui for clarifying
	this point with me. See also Theorem 5.1 in~\cite{sg17}.
\end{remark}

\begin{corollary} \label{cor:dnfsple}
	The commutator subgroup $D(\UF)$ is simple.
\end{corollary}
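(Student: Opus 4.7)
The plan is to apply Matui's Theorem \ref{thmmatuispl} to the action of $\UF$ on the Cantor space $\partial\tree$. For this I must check three things: that the topological full group of this action equals $\UF$, that the action is minimal, and that it is purely infinite. The first is essentially tautological from Definition \ref{deftopfull}: a homeomorphism of $\partial\tree$ locally agreeing with elements of $\UF$ also locally agrees with elements of $U(F)$, hence lies in $\UF$.

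For minimality and pure infiniteness, Remark \ref{rmkminpurinf} lets me descend to the subgroup $V_F \leq \UF$. Via the identification $V_F \cong [[\mathcal{G}_{\mathfrak{g}_F}]]$ from Theorem \ref{thmvfistopfull} (with $\partial\tree$ identified with $X_{\mathfrak{g}_F}$), both properties reduce to standard facts about topological full groups of irreducible one-sided SFTs. The graph $\mathfrak{g}_F$ is irreducible, since any two of its vertices are joined by a directed path (either directly between distinct $D^{(i)}$'s, or through the $\delta$-vertices or a different orbit when necessary), and $M_{\mathfrak{g}_F}$ is not a permutation matrix because every row indexed by some $D^{(i)}$ has entries summing to $d \geq 2$. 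Minimality then follows from irreducibility: the $\mathcal{G}_{\mathfrak{g}_F}$-orbit (i.e.\ tail-equivalence class) of any $x \in X_{\mathfrak{g}_F}$ meets every cylinder $[w]$, because irreducibility provides a finite path that can be prepended to an appropriate shift of $x$ via a bisection to land inside $[w]$. Pure infiniteness follows from the non-permutation condition, which provides, inside any proper nonempty cylinder $[w]$, two disjoint sub-cylinders $[w p_1], [w p_2]$ that can serve as targets of contracting bisections with domains covering $X_{\mathfrak{g}_F}$.

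Granted these three points, Theorem \ref{thmmatuispl} immediately yields that $D(\UF)$ is simple. The technically most delicate step is the construction of the contracting elements for pure infiniteness: while disjoint sub-cylinders are readily available from the non-permutation condition, one must exhibit the contracting maps as honest bisections of $\mathcal{G}_{\mathfrak{g}_F}$, i.e.\ as global homeomorphisms of $X_{\mathfrak{g}_F}$, which requires carefully pairing up the complements of $[w]$ with those of its intended images to assemble globally well-defined bisections.
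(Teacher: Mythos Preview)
Your argument is correct, but it follows a genuinely different route from the paper's. The paper applies Theorem \ref{thmmatuispl} to the action of $U(F)$ (not of $\UF$) on $\partial\tree$, so that the topological full group is $\UF$ by definition and no idempotence observation is needed; it then reduces via Remark \ref{rmkminpurinf} not to $V_F$ but to the subgroup $U(\{id\}) \leq U(F)$, and establishes minimality and pure infiniteness for $U(\{id\}) \curvearrowright \partial\tree$ by bare-hands tree geometry: transitivity on vertices gives minimality, and for pure infiniteness one exhibits explicit hyperbolic translations in $U(\{id\})$ along carefully chosen periodically-coloured bi-infinite geodesics, whose attracting endpoints lie in two disjoint subtree-boundaries inside $U$ and whose repelling endpoints lie outside $U$. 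Your route instead leans on the heavier machinery of Theorem \ref{thmvfistopfull} to transport the question to the SFT side, where minimality and pure infiniteness become standard groupoid facts about irreducible non-permutation shifts. The trade-off is that the paper's proof is elementary and entirely self-contained within the tree picture (it does not need Section \ref{chgenht} at all), whereas yours is conceptually cleaner once the SFT identification is in hand but depends on that identification and on your acknowledged ``delicate step'' of globally assembling the contracting bisections, which the paper sidesteps entirely by producing global tree automorphisms from the start.
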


\begin{proof}
	By Theorem \ref{thmmatuispl} it suffices to show that the action $U(F)\curvearrowright \partial \tree$ is purely infinite and minimal.  
	For minimality, we refer to Proposition 51 in \cite{a03}, there it is also shown that the action does no preserve any proper subtree.
	For a proof of the classical fact that this implies that the action is purely infinite, see Lemma 4.25 in \cite{lbmb18}.
\end{proof}

To investigate the abelianization of $V_F$, we need the well-known Smith normal form. For the reader's convenience we recall the statement here, it can be looked up e.g. in \cite{bk00}, Section 3.3.2 or \cite{aw92}, Section 5.3.

\begin{lemma}[Smith normal form] \label{lemsnf}
	Let $R$ be a principal ideal domain and let $M \in R^{m \times n}$ be a matrix. 
	Then, there exist invertible matrices $S \in R^{m \times m}$ and $T \in R^{n \times n}$, 
	an integer $k \leq \min\{m,n\}$ and elements $\epsilon_1,\dots,\epsilon_k \in R$, called \emph{elementary divisors}, 
	such that
	\begin{itemize}
		\item $\epsilon_i$ is the $i$-th diagonal entry of $SMT$,
		\item all other entries of $SMT$ are $0$ and
		\item $\epsilon_i$ divides $\epsilon_{i+1}$ for $k=1,\dots,k-1$.
	\end{itemize}
	The elementary divisors are unique up to multiplication with a unit. 
	They have the property that the product $\epsilon_1 \dots \epsilon_i$ is the greatest common divisor of the determinants of all $i \times i$-submatrices. Furthermore
	\[
	\operatorname{Coker}(M) \cong R^{m-k} \times \prod_{i=1}^k R/R\epsilon_i.
	\]
\end{lemma}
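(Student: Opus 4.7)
The plan is to prove this by reducing $M$ to diagonal form via a sequence of invertible row and column operations over $R$, then reading off both the uniqueness statement and the cokernel formula from the resulting normal form.

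First I would establish existence. The allowable operations are: multiplying a row (or column) by a unit of $R$, swapping two rows (or columns), and adding an $R$-multiple of one row (column) to another. Each such operation corresponds to left or right multiplication by an invertible matrix over $R$. The key tool specific to the PID hypothesis is that for any $a,b \in R$ with $g=\gcd(a,b)$, there exist $x,y \in R$ with $xa+yb=g$, and from this one constructs an invertible $2\times 2$ matrix over $R$ that transforms the column $(a,b)^t$ into $(g,0)^t$. I would then proceed by induction on $\min\{m,n\}$: if $M=0$ we are done, otherwise pick a nonzero entry of minimal length (measured by the number of prime factors, using unique factorization in the PID) and move it to the $(1,1)$-position by a row and column swap. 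Using the Bezout trick above on the first column and row, reduce so that the $(1,1)$-entry divides every other entry in the first row and column, and then clear those. If at this stage the $(1,1)$-entry $\epsilon_1$ does not divide some entry $m_{ij}$ in the remaining block, adding row $i$ to row $1$ and iterating the procedure strictly decreases the length of the $(1,1)$-entry, so finitely many steps force $\epsilon_1 \mid m_{ij}$ for all $i,j$. Apply the induction hypothesis to the lower-right $(m-1)\times(n-1)$ block to obtain the full diagonal form with divisibility $\epsilon_1 \mid \epsilon_2 \mid \cdots \mid \epsilon_k$.

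Next I would prove uniqueness via the invariance of minor gcds. For each $i$, let $D_i(M) \in R$ be the gcd of the determinants of all $i\times i$ submatrices of $M$, taken up to units. A direct computation shows that $D_i$ is unchanged under each elementary operation, since any $i\times i$ minor of the transformed matrix is an $R$-linear combination of $i\times i$ minors of the original one, and vice versa. For the diagonal matrix $\operatorname{diag}(\epsilon_1,\dots,\epsilon_k,0,\dots)$, the only nonzero $i\times i$ minors are products $\epsilon_{j_1}\cdots \epsilon_{j_i}$ with $j_1 < \cdots < j_i$, and the divisibility chain makes $\epsilon_1\cdots \epsilon_i$ the smallest such product in the divisibility order, hence $D_i = \epsilon_1\cdots \epsilon_i$ up to units. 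Consequently $\epsilon_i = D_i/D_{i-1}$ up to units, which is an invariant of $M$.

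Finally, the cokernel formula follows mechanically from the diagonal form. Writing $M = S^{-1} D T^{-1}$ with $D = SMT$ diagonal, the invertible changes of basis on $R^m$ and $R^n$ give isomorphisms $\operatorname{Coker}(M) \cong \operatorname{Coker}(D)$. Since $D$ acts coordinatewise, $\operatorname{Coker}(D)$ splits as a direct sum of the cokernels of multiplication by each $\epsilon_i$ on $R$ (for $i=1,\dots,k$) together with $m-k$ copies of $R$ coming from the zero rows, yielding
\[
\operatorname{Coker}(M) \cong R^{m-k} \times \prod_{i=1}^k R/R\epsilon_i.
\]
The main obstacle I would expect is bookkeeping in the reduction step, specifically ensuring termination of the process that forces $\epsilon_1$ to divide the rest of the matrix; using the length function from unique factorization makes this clean, but one has to be careful to keep the divisibility chain intact when recursing into the smaller block.
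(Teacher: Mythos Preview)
Your proof is correct and follows the standard textbook approach: reduce to diagonal form via elementary operations using B\'ezout coefficients and a length function for termination, establish uniqueness through the invariance of minor gcds under equivalence, and read off the cokernel from the diagonal form.

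However, the paper does not actually prove this lemma. It is introduced with the sentence ``To investigate the abelianization of $V_F$, we need the well-known Smith normal form. For the reader's convenience we recall the statement here,'' and the statement is given without proof. So there is nothing to compare against: the author treats the Smith normal form as a classical black box and simply invokes it. Your write-up is a perfectly good self-contained proof of that black box, but it goes beyond what the paper itself supplies.
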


\begin{notation}
	As in the preceeding section we denote the orbits of $F$ by $D^{(0)},\dots,D^{(l)} \subset D$.
	For each $i=0,\dots,l$ we write $d^{(i)} := |D^{(i)}|$.
\end{notation}

\begin{proposition} \label{abprop}
	The commutator subgroup $D(V_F)$ has finite index in $V_F$.
	More precisely, if all $d^{(i)}$ are even, the abelianization of $V_F$ is isomorphic to $(\mathbb{Z}/2\mathbb{Z})^{l+1}$.
	Otherwise it is isomorphic to $(\mathbb{Z}/2\mathbb{Z})^{l}$.
\end{proposition}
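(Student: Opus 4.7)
The plan is to combine Theorem~\ref{thmvfistopfull} with Theorem~\ref{shiftthm}: using $V_F \cong [[\mathcal{G}_{\mathfrak{g}_F}]]$, the abelianization of $V_F$ equals
$\bigl(\operatorname{Coker}(id - M_{\mathfrak{g}_F}^t) \otimes_{\mathbb{Z}} \mathbb{Z}/2\mathbb{Z}\bigr) \oplus \operatorname{Ker}(id - M_{\mathfrak{g}_F}^t)$,
so the whole task reduces to computing these two groups for the explicit integer matrix $M_{\mathfrak{g}_F}$ of the preceding section. Irreducibility of $\mathfrak{g}_F$ and the fact that $M_{\mathfrak{g}_F}$ is not a permutation matrix are direct checks from the construction once $d\ge 2$, so Theorem~\ref{shiftthm} genuinely applies. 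Finite index of $D(V_F)$ will follow as soon as we see that the kernel vanishes: the matrix is then injective, so its cokernel is torsion, hence finite.

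The key reduction exploits the two-valent vertices $\delta_k^{(i)}$. The corresponding rows of $id - M_{\mathfrak{g}_F}^t$ each have the shape $e_{\delta_k^{(i)}} - e_{D^{(i)}}$, so elementary row/column operations eliminate the $\delta$-generators: in the cokernel one identifies $e_{\delta_k^{(i)}}$ with $e_{D^{(i)}}$, and in the kernel one is forced to $y_k^{(i)} = x_i$. After this elimination one is left with an $(l+1)\times(l+1)$ integer matrix $B$ whose $j$-th column is the relation $2 e_{D^{(j)}} - \sum_{i=0}^l d^{(i)} e_{D^{(i)}}$. Subtracting column $0$ from column $j\ge 1$ further reduces $B$ to block form with first column $(1-d,\, -d^{(1)},\,\ldots,\, -d^{(l)})^T$ and $2\,I_l$ in the lower-right block; both the kernel and the cokernel can be read off this reduced form.

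For the kernel, substituting $y_k^{(i)}=x_i$ into the $D^{(j)}$-equations yields $d^{(j)} S = 2 x_j$ with $S:=\sum_i x_i$; summing over $j$ gives $(d-1) S = 0$, and $d\ge 2$ forces $S=0$ and then $x_j=0$ for every $j$. So the kernel is zero and the abelianization is simply $\operatorname{Coker}(id - M_{\mathfrak{g}_F}^t)\otimes \mathbb{Z}/2\mathbb{Z}$. Using that cokernel commutes with tensoring, this equals the cokernel of the reduced matrix modulo $2$; modulo $2$ the $2\,I_l$ block vanishes, leaving only the vector $v = (1-d,\, d^{(1)},\,\ldots,\, d^{(l)}) \bmod 2$ as the image. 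The cokernel mod $2$ is therefore $(\mathbb{Z}/2\mathbb{Z})^{l+1}/\langle v\rangle$, which is $(\mathbb{Z}/2\mathbb{Z})^{l+1}$ when $v \equiv 0$ and $(\mathbb{Z}/2\mathbb{Z})^l$ otherwise. Using $d+1 = \sum_i d^{(i)}$, one checks $v \equiv 0 \pmod 2$ precisely when every $d^{(i)}$ is even, matching the case distinction of the proposition. The main obstacle I foresee is the bookkeeping of the column operations that bring $id-M_{\mathfrak{g}_F}^t$ into the reduced block form; once that is in hand, both the kernel and the mod-$2$ cokernel computations are short.
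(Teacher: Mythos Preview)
Your proposal is correct and follows essentially the same route as the paper: both invoke Theorems~\ref{thmvfistopfull} and~\ref{shiftthm}, eliminate the two-valent $\delta$-vertices by elementary operations, and arrive at the same $(l+1)\times(l+1)$ block with first column $(1-d,-d^{(1)},\dots,-d^{(l)})^T$ and $2I_l$ below. The only cosmetic differences are that the paper reads off the kernel from the determinant $2^l(1-d)$ rather than solving the system directly, and argues via the parity of the elementary divisors in the Smith normal form rather than reducing mod~$2$ using right exactness of the tensor product; your variant is arguably a bit slicker but not substantively different.
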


\begin{proof}
	By Theorem \ref{shiftthm} the abelianization is isomorphic to 
	\[ V_F/D(V_F)  \cong  \left( \operatorname{Coker}(id - M_{\mathfrak{g}_F}^t) \otimes_{\mathbb{Z}} \mathbb{Z}/2\mathbb{Z} \right) \oplus \operatorname{Ker}(id - M_{\mathfrak{g}_F}^t).\]
	To determine $\operatorname{Coker}(id - M_{\mathfrak{g}_F}^t)$ and $\operatorname{Ker}(id - M_{\mathfrak{g}_F}^t)$ we use the Smith normal form.
	When writing out the matrix $id - M_{\mathfrak{g}_F}^t$ explicitly,
	it is not hard to see that performing elementary row- and column operations on $id-M_{\mathfrak{g}_F}^t$ we get the block diagonal matrix
	\[id - M_{\mathfrak{g}_F}^t \sim \left(\begin{array}{ccccc|cccccc}
	1      & 0      & 0      & \dots  & 0  & 0  & \dots  & \dots  & \dots  & \dots  & 0 \\
	0      & 1      & 0      & \dots  & 0  & 0  & \dots  & \dots  & \dots  & \dots  & 0 \\
	\vdots & \vdots & \ddots & \vdots & \vdots  & \vdots & \vdots & \vdots & \vdots & \vdots  & \vdots \\
	0      & 0      & \dots  & 1  & 0      & 0      & \dots  & \dots  & \dots  & \dots  & 0 \\
	0      & 0      & \dots  & 0  & 1      & 0      & \dots  & \dots  & \dots  & \dots  & 0 \\
	\hline
	0      & 0      & \dots  & \dots  & 0      & 1-d    & 0      & 0  & \dots  & \dots  & 0 \\
	0      & 0      & \dots  & \dots  & 0      & -d^{(1)}    & 2      & 0  & \dots  & \dots  & 0 \\
	0      & 0      & \dots  & \dots  & 0      & -d^{(2)}    & 0      & 2  & \dots  & \dots  & 0 \\
	\vdots & \vdots & \ddots & \vdots & \vdots  & \vdots & \vdots & \vdots & \ddots & \vdots  & \vdots \\
	0      & 0      & \dots  & \dots  & 0      & -d^{(l-1)}   & 0     & 0  & \dots  & 2  & 0 \\
	0      & 0      & \dots  & \dots  & 0      & -d^{(l)}    & 0      & 0  & \dots  & 0  & 2
	\end{array}
	\right)_, \]
	which has a $(d-l)\times(d-l)$ identity matrix in the upper left corner.
	The determinant of this matrix is $2^l \cdot (1-d)$, therefore $\operatorname{Ker}(id - M^t_{\mathfrak{g}_F}) = \{0\}$.
	This already implies that the number of elementary divisors is $d+1$, so
	$\operatorname{Coker}(id - M_{\mathfrak{g}_F}^t)$
	is finite and therefore $D(V_F)$ has finite index in $V_F$.
	
	We now determine the abelianization $V_F/D(V_F)$. Let $\epsilon_1,\dots,\epsilon_{d+1}$ be the elementary divisors of $id - M_{\mathfrak{g}_F}^t$.
	Lemma \ref{lemsnf} says $$\operatorname{Coker}(id - M_{\mathfrak{g}_F}^t) \cong \prod_{i=1}^{d+1} \mathbb{Z}/\epsilon_i \mathbb{Z}.$$
	The first $d-l$ elementary divisors are $\epsilon_1 = \dots = \epsilon_{d-l}=1$.
	The $(d-l+1)$-th to $(d+1)$-th elementary divisors are the elementary divisors of the second of two blocks in the above block diagonal matrix.
	They are given by the greatest common divisors of determinants of submatrices.
	Note that all possibly odd matrix entries are in the same column.
	From that we see that if one of the $d^{(i)}$ is odd, then the $(d-l+1)$-th elementary divisor is odd and the further ones are even, otherwise all are even.
	Now since $\mathbb{Z}/n\mathbb{Z} \otimes_\mathbb{Z} \mathbb{Z}/m\mathbb{Z} \cong \mathbb{Z}/\operatorname{gcd}(n,m)\mathbb{Z}$ and since tensor product is distribuitive with direct sums, we get that 
	\[
	V_F/D(V_F) \cong \prod_{i=1}^{d+1} \mathbb{Z}/\epsilon_i \mathbb{Z} \otimes_\mathbb{Z} \mathbb{Z}/2 \mathbb{Z}
	\cong \begin{cases}
	(\mathbb{Z}/2 \mathbb{Z})^{l+1} & \text{if all } d^{(i)} \text{ are even} \\
	(\mathbb{Z}/2 \mathbb{Z})^{l} & \text{otherwise}.
	\end{cases}
	\]
\end{proof}

\begin{remark}
	This shows in particular that if $F$ is not transitive, then $V_F$ is not isomorphic to any Higman-Thompson group. By Theorem 3.10 in \cite{m15}, this implies that also the commutator subgroup cannot be isomorphic to the commutator subgroup of any Higman-Thompson group.
\end{remark}

\begin{theorem} \label{thm:dnffi}
	The commutator subgroup $D(\mathcal{N}_F)$ of $\mathcal{N}_F$ is open and has finite index.
	More precisely, the homomorphism
	$$V_F/D(V_F) \to \UF/D(\UF), \quad \varphi D(V_F) \mapsto \varphi D(\UF)$$
	is surjective.
\end{theorem}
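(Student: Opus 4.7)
My plan is to reduce the theorem to a single technical claim: that $D(\UF)$ is open in $\UF$. Granting openness, the remaining assertions are formal. Since open subgroups are also closed, cosets of $D(\UF)$ are clopen, so the density of $V_F$ in $\UF$ (Theorem~\ref{cpctgenthm}) forces every coset of $D(\UF)$ to meet $V_F$. Combined with the evident inclusion $D(V_F)\subseteq D(\UF)$ (which comes from $V_F\subseteq \UF$), this makes the stated map $V_F/D(V_F)\to\UF/D(\UF)$ well-defined and surjective. The finite index of $D(\UF)$ in $\UF$ then follows at once, because $\UF/D(\UF)$ becomes a quotient of the finite group $V_F/D(V_F)$ (Proposition~\ref{abprop}).

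To show $D(\UF)$ is open, it suffices to exhibit any compact open subgroup of $\UF$ inside $D(\UF)$. I will try to show that $\Fix_{U(F)}(T)\subseteq D(\UF)$ for a sufficiently large finite complete subtree $T\subset \tree$ (for example, one big enough that every $F$-orbit of colours is represented by at least three leaves of $T$; this holds whenever $T$ is a ball of large enough radius around $v_0$). Since $U(F)$ is closed in $\AutT$ and satisfies Tits' independence property (Remark~\ref{rmkufplustriv} and Remark~\ref{titsrem}), one has the direct product decomposition
\[
\Fix_{U(F)}(T)\;=\;\prod_{v\in\mathcal{L}T}\Fix_{U(F)}(T)\big|_{\tree_v},
\]
so the problem reduces to showing that every element of $\UF$ supported on a single branch $\partial \tree_v\subsetneq\partial \tree$ lies in $D(\UF)$.

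For such an element $g$ I would exploit two ingredients already available in the paper. First, for any two leaves $v,w$ of $T$ with $\col(v),\col(w)$ in the same $F$-orbit there is a \emph{swap} element $\tau\in\UF$ acting as an involution exchanging $\partial \tree_v\leftrightarrow \partial \tree_w$ and as the identity elsewhere, constructed as a $U(F)$-honest almost automorphism from any element of $U(F)$ mapping $v$ to $w$. Second, the action $U(F)\curvearrowright\partial \tree$ is both minimal and purely infinite, established during the proof of simplicity of $D(\UF)$. A finite commutator-swap calculation in the spirit of the standard arguments for topological full groups of purely infinite groupoids (compare Matui~\cite{m15}, Section~4) should then express $g$ as a product of finitely many commutators in $\UF$.

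The main obstacle is precisely this last commutator-swap step. The naive identity $[g,\tau]=g\cdot (\tau g\tau^{-1})^{-1}$ only places the element $g\cdot (\tau g\tau^{-1})^{-1}$ in $D(\UF)$, which merely equates the $D(\UF)$-cosets of $g$ and of its conjugate $\tau g\tau^{-1}$ and is a tautology in the abelian quotient $\UF/D(\UF)$. The crucial extra input is pure infiniteness: it produces disjoint clopen copies of $\partial \tree_v$ \emph{inside} $\partial \tree_v$ itself, together with the corresponding transport elements in $\UF$, and iterating the commutator trick across these copies is what lets one eliminate the spurious ``tail'' supported outside $\partial \tree_v$ and write $g$ itself as a product of commutators. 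Making this iteration rigorous, and verifying that the auxiliary transport elements really are $U(F)$-honest almost automorphisms, is the technical heart of the argument.
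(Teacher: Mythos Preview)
Your reduction is correct and matches the paper: once $D(\UF)$ is open, density of $V_F$ (Theorem~\ref{cpctgenthm}) gives $V_F\cdot D(\UF)=\UF$, and then the second and third isomorphism theorems yield surjectivity of $V_F/D(V_F)\to \UF/D(\UF)$; finite index follows from Proposition~\ref{abprop}. The paper argues exactly this way.

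Where you diverge, and where your proposal has a genuine gap, is the proof of openness. You try to put $\Fix_{U(F)}(T)$ inside $D(\UF)$ by writing each element supported on a single branch as a product of commutators via swap/transport tricks exploiting pure infiniteness. You yourself flag that the final iteration is not carried out; as written this is an outline, not a proof. (You also do not separate out the discrete case $F$ acting freely, where openness is trivial.)

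The paper bypasses all of this with a one-line argument that you overlooked: in the non-discrete case the group $U(F)^+=\langle \Fix_{U(F)}(e)\mid e\in\mathcal E\rangle$ is open in $U(F)$ and, by Tits' Theorem~\ref{thmtitssimpl}, simple and non-abelian (Remark~\ref{rmkufplustriv}). Hence $D(U(F)^+)\neq\{1\}$, so $D(\UF)\cap U(F)^+$ is a non-trivial subgroup of $U(F)^+$; it is normal in $U(F)^+$ because $D(\UF)\trianglelefteq\UF$. Simplicity of $U(F)^+$ forces $U(F)^+\leq D(\UF)$, and since $U(F)^+$ is open in $\UF$, so is $D(\UF)$. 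This avoids any direct commutator computation: the hard work was already done by Tits. Your purely-infinite commutator scheme could presumably be completed along the lines of Matui's Section~4, but it is considerably more laborious and duplicates machinery already packaged in Theorem~\ref{thmtitssimpl}.
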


\begin{proof}
	We first show that $D(\mathcal{N}_F)$ is open. If $U(F)$ is discrete, so is $\UF$ and there is nothing to show.
	Recall that $U(F)^+$ is the subgroup of $U(F)$ generated by all the edge fixators in $U(F)$.
	It is trivial if and only if the action of $F$ on $D$ is free, so if and only if $U(F)$ is discrete, see Remark \ref{rmkufplustriv}.
	Otherwise it is open in $U(F)$ and simple by Theorem \ref{thmtitssimpl}.
	If $U(F)$ is non-discrete, it is easy to find two non-commuting elements in $U(F)^+$,
	so $U(F)^+$ is not an abelian group. 
	Therefore $D(\mathcal{N}_F) \cap U(F)^+$ is non-trivial and normal in $U(F)^+$. 
	Simplicitly of $U(F)^+$ now implies $U(F)^+ \leq D(\mathcal{N}_F)$ and as a conclusion $D(\mathcal{N}_F)$ is open.
	
	Obviously $D(V_F)$ is a normal subgroup of $D((\mathcal{N}_F) \cap V_F)$.
	By the third isomorphism theorem
	the homomorphism $V_F/D(V_F) \to V_F/(D(\mathcal{N}_F) \cap V_F)$ is surjective.
	The second isomorphism theorem implies
	$$V_F/(D(\mathcal{N}_F) \cap V_F) \cong V_F \cdot D(\mathcal{N}_F) / D(\mathcal{N}_F).$$
	Since $V_F$ is dense and $D(\mathcal{N}_F)$ is open in $\UF$
	we know $V_F \cdot D(\mathcal{N}_F)= \UF$ and the result follows.
	%
\end{proof}

The following corollary is immediate.

\begin{corollary}
	If $d$ is even and $F$ is transitive, then $\UF$ is simple.
\end{corollary}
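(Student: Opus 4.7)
The plan is to combine the surjection $V_F/D(V_F) \twoheadrightarrow \UF/D(\UF)$ from the preceding theorem with the explicit computation of $V_F/D(V_F)$ in Proposition \ref{abprop}, and then invoke simplicity of $D(\UF)$ from the earlier corollary. The strategy is to show that under the hypotheses the abelianization $V_F/D(V_F)$ is already trivial, forcing $\UF = D(\UF)$.

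First I would translate the hypotheses into the parameters appearing in Proposition \ref{abprop}. Since $F$ is transitive on $D = \{0,\dots,d\}$, there is a single orbit $D^{(0)} = D$, so $l = 0$ and $d^{(0)} = d+1$. Since $d$ is even, the integer $d^{(0)} = d+1$ is odd, so we are in the ``otherwise'' branch of Proposition \ref{abprop}. That proposition then yields
\[
V_F/D(V_F) \cong (\mathbb{Z}/2\mathbb{Z})^{l} = (\mathbb{Z}/2\mathbb{Z})^{0} = \{e\}.
\]

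Next I would apply the preceding theorem, which asserts that the natural map $V_F/D(V_F) \to \UF/D(\UF)$ is surjective. Since the source is trivial, so is the target, i.e.\ $\UF = D(\UF)$. Combined with the corollary stating that $D(\UF)$ is simple, this gives simplicity of $\UF$ itself.

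There is no real obstacle here; the work has already been done in Proposition \ref{abprop} (the Smith normal form computation of the abelianization) and in the preceding theorem (density of $V_F$ in $\UF$ together with openness of $D(\UF)$). The corollary is essentially a numerical observation: transitivity collapses the exponent $l$ to $0$, and the parity of $d$ is exactly what ensures we land in the branch where no factor of $\mathbb{Z}/2\mathbb{Z}$ survives.
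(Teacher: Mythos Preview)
Your argument is correct and is precisely the reasoning the paper has in mind; the corollary is stated as ``immediate'' because Proposition \ref{abprop} together with the surjection $V_F/D(V_F)\twoheadrightarrow \UF/D(\UF)$ and the simplicity of $D(\UF)$ give exactly what you wrote. Your identification $l=0$, $d^{(0)}=d+1$ odd, hence $V_F/D(V_F)\cong(\mathbb{Z}/2\mathbb{Z})^0$ trivial, is the intended computation.
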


\subsection{Normal subgroups}

We want to understand what normal subgroups $\UF$ can have.

\paragraph{Sign of an almost automorphism.} Let $T_1,T_2$ be finite complete subtrees of $\tree$.
Let $\varphi \colon \tree \setminus T_1 \to \tree \setminus T_2$ be a $U(F)$-honest almost automorphism.
If $\varphi \in V_F$ then by enlarging $T_1$ and $T_2$ if necessary we assume that
$\varphi$ is induced by the bijection
$\varphi|_{\mathcal{L}T_1} \colon \mathcal{L}T_1 \to \mathcal{L}T_2$.
Consider an $F$-invariant subset $D' \subset D$.
Then $\varphi$ induces a bijection
\[
\kappa \colon \{v \in \mathcal{L}T_1 \mid \ell_F(v) \subset D'\} \to \{v \in \mathcal{L}T_2 \mid \ell_F(v) \subset D'\}.
\]
Recall that in Section \ref{chht} we defined the lexographical order on the plane ordered tree $\mathcal{T}$.
There exists a unique order-preserving bijection
\[
\iota \colon \{v \in \mathcal{L}T_2 \mid \ell_F(v) \subset D'\} \to \{v \in \mathcal{L}T_1 \mid \ell_F(v) \subset D'\}.
\]
We define $\varphi_{\mathcal{L}_{D'}T_1} := \iota \circ \kappa$. 
Denote by $\sgn_{D'}(\varphi) \in \{1,-1\}$ the sign of the permutation $\varphi_{\mathcal{L}_{D'}T_1}$. 
Recall that, as we have seen in Section \ref{chht} for $D'=D$, it is only defined on honest almost automorphisms and ist not constant on equivalence classes in general.

\begin{proposition} \label{sigmawelldef}
	Let $D' \subset D$ be $F$-invariant.
	\begin{enumerate}
		\item[a)] The sign $\sgn_{D'}$ induces a well-defined homomorphism $V_F \to \{1,-1\}$ if and only if the cardinality $|D'|$ is even.
		\item[b)] The sign $\sgn_{D'}$ induces a well-defined homomorphism $\mathcal{N}_F \to \{1,-1\}$ if and only if the following two conditions are satisfied.
		\begin{enumerate}
			\item[1.] For every $\chi \in D$ holds
			$\{f|_{D'} \mid f \in F \colon f(\chi)=\chi\} \leq \operatorname{Alt}(D')$.
			\item[2.] The cardinality $|D'|$ is even.
		\end{enumerate}
	\end{enumerate}
\end{proposition}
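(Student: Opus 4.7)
The plan is to analyse how $\sgn_{D'}$ transforms under a simple expansion of the representative: since any two honest representatives of an almost automorphism differ by a zig-zag of such expansions, well-definedness on equivalence classes is equivalent to invariance under a single simple expansion. Fix a $U(F)$-honest almost automorphism $\varphi \colon \tree \setminus T_1 \to \tree \setminus T_2$, a leaf $v \in \mathcal{L}T_1$ of colour $\chi \in D^{(i)}$, a local representative $g \in U(F)$ agreeing with $\varphi$ on $\tree_v$, and set $\chi' = \col(gv) \in D^{(i)}$. The simple expansion of $T_1$ at $v$ gives $T_1'$ with corresponding $T_2'$ and bijection $\kappa'$. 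The key local object is
\[
\tilde{\tau} := f_{\chi'} \circ \prm_{g, v} \circ f_\chi^{-1} \in F,
\]
which satisfies $\tilde{\tau}(\chi^{(i)}) = \chi^{(i)}$ and hence lies in the stabiliser $F_{\chi^{(i)}}$ of $\chi^{(i)}$ in $F$. By construction of the plane order (Proposition \ref{propord}), once the $D'$-coloured children of $v$ and of $gv$ are identified with $D' \setminus \{\chi^{(i)}\}$ via $f_\chi$ and $f_{\chi'}$, the induced local permutation between the two sibling sets is exactly $\tilde{\tau}|_{D' \setminus \{\chi^{(i)}\}}$; its sign equals $\sgn(\tilde{\tau}|_{D'})$ because $\tilde{\tau}$ fixes $\chi^{(i)}$.

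The main technical step, and what I expect to be the chief obstacle, is the inversion count for $\iota' \circ \kappa'$ in terms of that for $\iota \circ \kappa$. All $D'$-coloured children of $v$ occupy, in the lex order on $\mathcal{L}T_1'$, the block of positions formerly held by $v$ in $\mathcal{L}T_1$, and similarly for $gv$ on the $T_2$-side, so for each unchanged $D'$-coloured leaf $u$ the pair (new child, $u$) is an inversion either uniformly over all new children or for none. Splitting into the cases $\chi \notin D'$ (in which $v$ was absent from the $\sgn_{D'}$-permutation) and $\chi \in D'$ (in which the old inversions through $v$ disappear), the parity of the change in the number of inversions works out to
\[
\nu(\tilde{\tau}|_{D'}) + |D'| \cdot S_v \pmod 2,
\]
where $\nu(\sigma) \in \{0,1\}$ is the sign bit $\sgn(\sigma) = (-1)^{\nu(\sigma)}$ and $S_v$ is a non-negative integer that depends on $\varphi$ and $v$ but not on $\tilde{\tau}$. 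Invariance under the simple expansion thus requires both $|D'| \cdot S_v \equiv 0 \pmod 2$ for all admissible $(\varphi, v)$ and $\sgn(\tilde{\tau}|_{D'}) = +1$ for all admissible $\tilde{\tau}$.

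For (a), Proposition \ref{propord} guarantees that elements of $V_F$ act order-preservingly on children in the plane order, so $\tilde{\tau}|_{D' \setminus \{\chi^{(i)}\}}$ is the identity and $\nu(\tilde{\tau}|_{D'}) = 0$; the sign change collapses to $(-1)^{|D'| \cdot S_v}$, which vanishes for all admissible choices iff $|D'|$ is even, with the converse produced explicitly by a single element of $V_F$ swapping two $D'$-coloured subtrees of the same $F$-orbit. For (b), as $\varphi$, $v$ and $g$ vary, $\prm_{g, v}$ ranges over the coset $\{f \in F : f(\chi) = \chi'\} = f_{\chi'}^{-1} f_\chi \cdot F_\chi$, and substitution yields $\tilde{\tau} = f_\chi f f_\chi^{-1}$ with $f \in F_\chi$, so $\tilde{\tau}$ ranges over all of $f_\chi F_\chi f_\chi^{-1} = F_{\chi^{(i)}}$. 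Since the sign of a permutation of $D'$ is invariant under conjugation by any element of $\Sym(D')$ and $f_\chi|_{D'} \in \Sym(D')$, the inclusions $F_\chi|_{D'} \leq \Alt(D')$ and $F_{\chi^{(i)}}|_{D'} \leq \Alt(D')$ are equivalent for every $\chi \in D^{(i)}$; therefore condition (b)(1) is equivalent to $\sgn(\tilde{\tau}|_{D'}) = +1$ for all admissible $\tilde{\tau}$. Combined with the parity condition (b)(2) this gives invariance of $\sgn_{D'}$ under all simple expansions; conversely, failure of either condition is witnessed by an element of $\UF$ supported near a single vertex whose two representatives have opposite $\sgn_{D'}$. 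Finally, once well-definedness on equivalence classes is established, the homomorphism property follows immediately by composing two representatives with a common middle tree and invoking multiplicativity of the standard sign on the symmetric group of the $D'$-coloured leaves.
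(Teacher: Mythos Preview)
Your approach is correct and follows the same overall strategy as the paper: reduce well-definedness to invariance under a single simple expansion, then analyse how the inversion count on the $D'$-coloured leaves changes. The paper carries this out by a direct case split on whether $\col(w_0)\in D'$, and for part~(b) first postcomposes with the $V_F$-element induced by $\varphi|_{\mathcal L T_1}$ to reduce to the situation $\varphi|_{\mathcal L T_1}=\mathrm{id}$, where the only new inversions are among the children of $w_0$ and are governed by an element of the point stabiliser $F_{\col(w_0)}$. Your version instead packages everything into the normalised local permutation $\tilde\tau=f_{\chi'}\circ\prm_{g,v}\circ f_\chi^{-1}\in F_{\chi^{(i)}}$ and the single parity formula $\nu(\tilde\tau|_{D'})+|D'|\cdot S_v$; this is a cleaner bookkeeping device and makes the equivalence in~(b)(1) transparent via conjugation, while the paper's reduction-to-$V_F$ trick avoids introducing $\tilde\tau$ at the cost of a two-step argument. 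The counterexamples you sketch for the ``only if'' directions are the same as the paper's (a transposition of two $D'$-coloured leaves for~(a), and an element of $U(F)$ realising a given odd $f\in F_\chi$ at a single vertex for~(b)).
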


\begin{proof}
	Let $T_1,T_2,T_3$ be finite complete subtrees of $\tree$ and let
	\begin{align*}
	\psi &\colon \tree \setminus T_1 \to \tree \setminus T_2 \\
	\psi' &\colon \tree \setminus T_2 \to \tree \setminus T_3
	\end{align*}
	be $U(F)$-honest almost automorphisms.
	It is clear that $$\sgn_{D'}(\psi' \circ \psi)=\sgn_{D'}(\psi')\sgn_{D'}(\psi).$$
	
	We now prove the ``if''-parts of a) and b).
	Consider a $U(F)$-honest almost automorphism $\varphi \colon \tree \setminus T_1 \to \tree \setminus T_2$.
	We need to show that for an equivalent honest almost automorphism $\varphi'$ with $T_1$ replaced by a simple expansion $T_1'$ (and similarly $T_2$ replaced by a simple expansion $T_2'$) holds $\sgn_{D'}(\varphi)=\sgn_{D'}(\varphi')$.
	Recall that an inversion of the permutation $\varphi_{\mathcal{L}_{D'}T_1}$ is a pair $(v,w)$
	such that $v<w$ but $\varphi_{\mathcal{L}_{D'}T_1}(v)>\varphi_{\mathcal{L}_{D'}T_1}(w)$.
	Also recall that the sign of a permutation is $1$ or $-1$ depending on if the number of its inversions is even or odd. Denote the set of inversions of a permutation $\rho$ by
	$\operatorname{Inv}(\rho)$.
	
	Let $w_0 \in \mathcal{L}T_1$ be the leaf of $T_1$ whose children are leaves of $T_1'$.
	%
	%
	
	\emph{Step 1: The ``if''-part of a).}
	Assume that $|D'|$ is even. Assume that the $U(F)$-honest almost automorphism $\varphi$ is the element of $V_F$ induced by the bijection $\varphi|_{\mathcal{L}T_1} \colon \mathcal{L}T_1 \to \mathcal{L}T_2$.
	
	Observe that
	\begin{align*}
	\operatorname{Inv}(\varphi'_{\mathcal{L}_{D'}T'_1})
	&= \{(v,w) \in \operatorname{Inv}(\varphi_{\mathcal{L}_{D'}T_1}) \mid v \neq w_0 \neq w\} \\
	& \quad \sqcup
	\{(v,w) \in \operatorname{Inv}(\varphi'_{\mathcal{L}_{D'}T'_1}) \mid v \text{ child of }w_0, \,
	w \text{ no child of }w_0\} \\
	& \quad \sqcup
	\{(v,w) \in \operatorname{Inv}(\varphi'_{\mathcal{L}_{D'}T'_1}) \mid v \text{ no child of }w_0, \,
	w \text{ child of }w_0\} \\
	& \quad \sqcup
	\{(v,w) \in \operatorname{Inv}(\varphi'_{\mathcal{L}_{D'}T'_1}) \mid v,w \text{ children of }w_0\}.
	\end{align*}
	The second assumption of Step 1 implies
	\[
	\{(v,w) \in \operatorname{Inv}(\varphi'_{\mathcal{L}_{D'}T'_1}) \mid v,w \text{ children of }w_0\} = \emptyset.
	\]
	Let $w$ be a child of $w_0$ and $v \in \mathcal{L}T_1 \setminus \{w_0\}$ such that $\ell_F(v),\ell_F(w)\subset D'$.
	Then $(v,w)$ is an inversion for $\varphi'_{\mathcal{L}_{D'}T'_1}$
	if and only if for every child $w'$ of $w_0$ with $\ell_F(w') \subset D'$ the pair $(v,w')$ is an inversion. The analogous statement holds for the pair $(w,v)$.
	Therefore the cardinalities of the second and third set in above union are divisible
	by the number of children of $w_0$ whose label is contained in $D'$.
	We now distinguish two cases.
	
	\emph{Case 1:} $\ell_F(w_0) \nsubseteq D'$
	
	In this case
	\[
	\{(v,w) \in \operatorname{Inv}(\varphi_{\mathcal{L}_{D'}T_1}) \mid v \neq w_0 \neq w\}
	= \operatorname{Inv}(\varphi_{\mathcal{L}_{D'}T_1}).
	\]
	Furthermore the number of children of $w_0$ whose label is a subset of $D'$ is $|D'|$,
	hence even by assumption.
	Therefore $|\operatorname{Inv}(\varphi_{\mathcal{L}_{D'}T_1})|$ is even if and only if
	$|\operatorname{Inv}(\varphi'_{\mathcal{L}_{D'}T'_1})|$ is even.
	Consequently $\sgn_{D'}(\varphi')=\sgn_{D'}(\varphi)$.
	
	\emph{Case 2:} $\ell_F(w_0) \subset D'$
	
	Let $v \neq w_0$ be a leaf of $T_1$ with $\ell_F(v) \subset D'$.
	Let $w$ be a child of $w_0$ with $\ell_F(w) \subset D'$.
	Note that by definition of the lexicographical order holds
	$(v,w) \in \operatorname{Inv}(\varphi'_{\mathcal{L}_{D'}T'_1})$ if and only if
	$(v,w_0) \in \operatorname{Inv}(\varphi_{\mathcal{L}_{D'}T_1})$.
	This implies
	\begin{align*}
	|\{(v,w) \in &\operatorname{Inv}(\varphi'_{\mathcal{L}_{D'}T'_1}) \mid v \text{ child of }w_0, \,
	w \text{ no child of }w_0\}| \\
	&= (|D'|-1) \cdot |\{(v,w) \in \operatorname{Inv}(\varphi_{\mathcal{L}_{D'}T_1}) \mid w=w_0 \}|
	\end{align*}
	and since, by assumption, the number $|D'|-1$ is odd, the cardinality
	$|\{(v,w) \in \operatorname{Inv}(\varphi'_{\mathcal{L}_{D'}T'_1}) \mid v \text{ child of }w_0, \,
	w \text{ no child of }w_0\}|$ is even if and only if
	$|\{(v,w) \in \operatorname{Inv}(\varphi_{\mathcal{L}_{D'}T_1}) \mid w=w_0 \}|$ is even.
	The analogous statement holds for $(w,v)$ instead of $(v,w)$.
	Consequently
	\begin{align*}
	|\operatorname{Inv}(\varphi'_{\mathcal{L}_{D'}T'_1})|
	&\equiv |\{(v,w) \in \operatorname{Inv}(\varphi_{\mathcal{L}_{D'}T_1}) \mid v \neq w_0 \neq w\}| \\
	& \quad +
	|\{(v,w) \in \operatorname{Inv}(\varphi_{\mathcal{L}_{D'}T_1}) \mid w=w_0 \}| \\
	& \quad + |\{(w,v) \in \operatorname{Inv}(\varphi_{\mathcal{L}_{D'}T_1}) \mid w=w_0 \}| \\
	&\equiv |\operatorname{Inv}(\varphi_{\mathcal{L}_{D'}T_1})| \mod (2).
	\end{align*}
	This implies $\sgn_{D'}(\varphi')=\sgn_{D'}(\varphi)$.
	%
	
	\emph{Step 2: The ``if''-part of b).} Assume that Assumptions 1. and 2. hold.
	
	Recall that the bijection $\varphi|_{\mathcal{L}T_1} \colon \mathcal{L}T_1 \to \mathcal{L}T_2$ uniquely determines an element of $V_F$. 
	By post-composing $\varphi$ with the inverse of this element and Step 1, we can assume $\varphi|_{\mathcal{L}T_1} = id$.
	Now passing to $\varphi'$, we see that the only possible inversions for $\varphi'_{\mathcal{L}_{D'}T'_1}$ are amongst the children of $w_0$ with label in $D'$.
	Note that they are permuted by an elmemet of $F$ that fixes $w_0$.
	By Assumption 2., there are evenly many inversions. This concludes the proof that $\sgn_{D'}(\varphi)=\sgn_{D'}(\varphi')$.
	
	For the ``only if''-parts denote by $S_n(v_0)$ the vertices of distance $n$ to $v_0$ and denote by $B_n(v_0)$ the finite complete subtree of $\tree$ spanned by $S_n(v_0)$.
	
	\emph{Step 3: The ``only if''-part of a).}
	Assume that $|D'|$ is odd.
	
	Let $\chi \in D'$.
	Let $n \geq 2$ be an integer and let $v,w \in S_n(v_0)$ be such that the colour of their parent edges is $\chi$.
	Let $\varphi \colon \tree \setminus B_n(v_0) \to \tree \setminus B_n(v_0)$  be the element of $V_F$ induced by the transposition of $v$ and $w$.
	Then $\varphi_{\mathcal{L}_{D'}B_{n}(v_0)}$ is a transposition
	and therefore $\sgn_{D'}(\varphi)=-1$.
	We consider now the $U(F)$-honest almost automorphism $\varphi' \colon \tree \setminus B_{n+1}(v_0) \to \tree \setminus B_{n+1}(v_0)$ that is equivalent to $\varphi$.
	The permutation $\varphi'_{\mathcal{L}_{D'}B_{n+1}(v_0)}$ is 
	the product of $|D'|-1$ many transpositions.
	Since $|D'|$ is odd, $\sgn_{D'}(\varphi')=1$.
	Therefore $\sgn_{D'}$ is not well-defined on equivalence classes of almost automorphisms.
	
	\emph{Step 4: The ``only if''-part of b).} Assume that Assumption 1. does not hold.
	
	Let $n \geq 1$.
	Let $f \in F$ and $\chi \in D$ be such that $f(\chi)=\chi$ and such that $f|_{D'} \notin \Alt(D')$. 
	Choose $g \in U(F)$ as follows. Pick a vertex $v \in S_n(v_0)$ such that the colour of its parent edge of $v$ is $\chi$. 
	Let $g|_{\tree \setminus \tree_v}=id$ and $\prm_{g,v}=f$. Note that this implies $g(v)=v$ and $g(\tree_v)=\tree_v$. 
	Let $\varphi$ and $\varphi'$ be $U(F)$-honest almost automorphisms
	$\varphi \colon \tree \setminus B_n(v_0) \to \tree \setminus B_n(v_0)$ 
	and $\varphi' \colon \tree \setminus B_{n+1}(v_0) \to \tree \setminus B_{n+1}(v_0)$ equivalent to $g$. 
	Then, since $f$ is an odd permutation we know that $\sgn_{D'}(\varphi) = 1$ and $\sgn_{D'}(\varphi') = -1$.
	Therefore $\sgn_{D'}$ is not well-defined on the equivalence classes of almost automorphisms.
\end{proof}

\begin{example}
	Let $d=6$ and let $F=\langle (1\,2)(3\,4),(5\,6) \rangle$.
	Then $F$ has four orbits, one of them with odd cardinality, and therefore $[V_F:D(V_F)]=8$.
	Consider $0 \in D$. Its stabilizer in $F$ equals $F$ and restricts to a subgroup of $\Alt(D')$ with $|D'|$ even
	for $D'=\{1,2,3,4\}$.
	The stabilizer of $1,2,3,4$ restricts to  a subgroup of $\Alt(D')$ with $|D'|$ even for $D'=\{1,2,3,4\}$.
	The stabilizer of $5$ and $6$ restricts to  a subgroup of $\Alt(D')$ with $|D'|$ even for $D'=\{1,2,3,4\}$ and $D'=\{1,2,3,4,5,6\}$.
	Therefore $\sgn_{D'}$ is a well-defined homomorphism only for $D'=\{1,2,3,4\}$.
\end{example}

\begin{remark}
	The set $\Delta := \{D' \subset D \mid D' \text{ as in Prop. \ref{sigmawelldef}b)} \}$ is closed under symmetric difference, so it is an abelian group where every element has order $2$.
	It seems plausible, and is true for $V_F$, that $\UF/D(\UF)$ is isomorphic to this group via an isomorphism induced by
	\[
	\UF \to \Delta, \quad \varphi \mapsto \sum_{\sgn_{D'}(\varphi)=-1} D'.
	\]
\end{remark}

\section{No lattices}

In this section $F \leq \Sym(D)$ will always be a Young subgroup with orbits $D^{(0)},\dots,D^{(l)} \subset D$. Recall that this means  $F=\prod_{i=0}^l \Sym(D^{(i)})$.
Denote again $\UF := \mathcal{N}(U(F))$.
The main goal of this section is to prove the  following theorem.

\begin{theorem} \label{nolattice}
	Assume $F \leq \Sym(D)$ is a Young subgroup with less than $d$ orbits.
	Then the group $\mathcal{N}_F$ does not admit a lattice.
\end{theorem}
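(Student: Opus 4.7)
The plan is to adapt the strategy of Bader--Caprace--Gelander--Mozes~\cite{bcgm12} used for Neretin's group itself. Assume for contradiction that $\Gamma \leq \mathcal{N}_F$ is a lattice. Fix the compact open subgroup $K := \Fix_{U(F)}(v_0)$, which is compact and open in $\mathcal{N}_F$ by Proposition~\ref{proptopexists}, and write $K_n := \Fix_{U(F)}(B_n(v_0))$ for the descending neighbourhood basis of the identity. Since $K$ is compact and $\Gamma$ discrete, the intersection $\Gamma \cap K$ is finite, and the finite-covolume hypothesis gives a uniform summability condition on the $\Gamma$-orbits in the homogeneous space $K \backslash \mathcal{N}_F$. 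Note that by Theorem~\ref{cpctgenthm} and the virtual simplicity proved in Section~\ref{chspl}, $\mathcal{N}_F$ is unimodular, so the formalism of counting $\Gamma$-orbits via Haar measure is available.

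The core step is the construction of a ``contracting'' family of almost automorphisms. For each sufficiently large $n$ one produces $\varphi_n \in \mathcal{N}_F$ together with a non-trivial element $k_n \in K_n$ such that the conjugate $\varphi_n k_n \varphi_n^{-1}$ lies in $K$ but escapes every prescribed $K_m$ as $n \to \infty$. The existence of the $k_n$ uses that $F$ does not act freely on $D$, which by Remark~\ref{rmkufplustriv} makes $U(F)^+$ non-trivial and produces non-trivial elements in $\Fix_{U(F)}(B_n(v_0))$ at every depth. The existence of the transport $\varphi_n$ uses that $F$ is not a single switch group: via Tits' Independence Property and Lemma~\ref{lemextuf} one should then have enough combinatorial flexibility to construct an almost automorphism that rearranges the support of $k_n$ into a bounded region. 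With this family in hand, the BCGM argument yields a contradiction by producing non-trivial elements $\gamma_n \in \Gamma$ in arbitrarily small neighbourhoods of the identity, violating the discreteness of $\Gamma$. In the single switch case one expects a weaker conclusion because the non-cocompact direction of the BCGM argument breaks down; here the covolume control only yields non-existence of cocompact lattices.

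The principal obstacle I expect is the construction of the contracting family, and in particular the verification that the excluded hypothesis ``$F$ is a single switch group'' is precisely what makes this construction fail. If $F = \langle (a\,b)\rangle$, then the chain $K_n$ is a pro-$2$ tower with rigid layers $K_n/K_{n+1}$ determined by the colours $\{a,b\}$, and there is insufficient room in $\mathcal{N}_F$ to realise the desired $\varphi_n$ as an honest almost automorphism preserving the local $F$-structure. A careful case analysis---distinguishing whether $F$ is transitive, whether $|F|>2$, and whether a potential transposition in $F$ is accompanied by further permutations---will be needed to produce $\varphi_n$ in all remaining cases. Once this dynamical input is established, the remainder of the proof is a direct transcription of~\cite{bcgm12}.
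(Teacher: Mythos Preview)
Your outline misidentifies the BCGM mechanism. There is no ``contracting family'' $\varphi_n$ and no conjugation dynamics in \cite{bcgm12}; the argument is entirely a covolume--versus--combinatorics estimate inside finite symmetric groups. As written, your plan never explains how the elements $\varphi_n k_n \varphi_n^{-1}$ (which lie in $K$, not in $\Gamma$) would ever produce non-trivial elements of the \emph{lattice} $\Gamma$ in small neighbourhoods of the identity; that bridge is the whole point, and it is not supplied.

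The paper's actual route is the following. First pass to the open subgroup $O=\bigcup_{n\geq 0}O_n$, where $O_n$ consists of equivalence classes of $U(F)$-honest almost automorphisms $\tree\setminus B_n(v_0)\to\tree\setminus B_n(v_0)$; by Lemma~\ref{lemlatticeopen} it suffices to show $O$ has no lattice. Letting $\pi_n\colon O_n\to\Sym(S_n(v_0))$ with kernel $U_n$ and image $\prod_i\Sym(D_n^{(i)})$, discreteness of a putative lattice $\Gamma$ and the covolume bound give
\[
\Big[\prod_{i=0}^{l}\Sym(D_n^{(i)}):\Gamma_n\Big]\ \leq\ c\cdot|\Aut_{G(F)}(B_n(v_0))|
\]
for $\Gamma_n:=\pi_n(\Gamma\cap O_n)$. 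The place where ``$F$ is not a single switch group'' enters is purely analytic: an explicit Stirling-type computation (Lemma~\ref{lemexplcard} and Lemma~\ref{smallest}) shows that exactly when $(d^{(0)},\dots,d^{(l)})$ is \emph{not} of the form $(2,1,\dots,1)$ one has
\[
[\Sym(S_n(v_0)):\Gamma_n]\ \leq\ C\cdot d^{|S_n(v_0)|}
\]
for large $n$. One then feeds this index bound into the structural Proposition~\ref{alternative} from \cite{bcgm12}, which forces $\Gamma_n$ to contain a large product of alternating groups on $S_n(v_0)$; a pigeonhole on siblings produces a non-trivial element of $\Gamma\cap U_{n-1}$ (or $U_{n-2}$), contradicting discreteness. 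The case analysis you anticipate (transitivity of $F$, $|F|>2$, etc.) is not needed: the only datum of $F$ that matters is the tuple of orbit sizes $(d^{(0)},\dots,d^{(l)})$, and the single excluded case drops out of a concrete inequality, not from a failure to build some $\varphi_n$.
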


The case $F=\Sym(D)$ is the content of \cite{bcgm12}.
Our proof follows the same argument.

\begin{notation}
	For each $i=0,\dots,l$ we write $d^{(i)} := |D^{(i)}|$.
	
	For $n \geq 0$ we denote by $S_n(v_0)$ the set of vertices of distance $n$ from $v_0$ and by $B_n(v_0)$ the subtree spanned by $S_n(v_0)$, i.e. the smallest subtree of $\tree$ containing $S_n(v_0)$.
	
	Denote by $O_n$ the equivalence classes of all $U(F)$-honest almost automorphisms 
	$\tree \setminus B_{n}(v_0) \to \tree \setminus B_{n}(v_0)$. 
	It is easy to see that $O_n < \UF$ is a compact and open subgroup and that $O_n < O_{n+1}$.
	For $n=0$ we have $B_0(v_0) = \{v_0\}$ and $O_0 = \operatorname{Fix}_{U(F)}(v_0) < U(F)$.
	Let
	\[
	O = \bigcup_{n \geq 0} O_n.
	\]
	
	Denote by $\mu$ the Haar measure on $\mathcal{N}_F$ normalized by $\mu(O_0)=1.$ 
	For $n \geq 0$ denote
	\[
	U_{n} = \operatorname{Fix}_{U(F)}(B_n(v_0)) < U(F).
	\]
	In particular $U_0=O_0$.
	The collection $\{U_n \mid n \geq 0\}$ is a neighbourhood basis of the identity for $U(F)$ and therefore also for $\mathcal{N}_F$ and $O$.
	
	For $n \geq 1$ the group $O_n$ acts on the $(d+1)d^{n-1}$ leaves of $B_{n}(v_0)$.
	Denote this action by $$\pi_n \colon O_n \to \Sym(S_n(v_0)).$$
	Its kernel is $U_n$. Clearly
	it has $l+1$ orbits $D_n^{(0)}, \dots, D_n^{(l)}.$
	We can determine $D_n^{(i)}$ explicitly, namely $$D_n^{(i)} = \{v \in S_n(v_0) \mid \ell_F(v) = D^{(i)}\},$$
	where as in the previous section $\ell_F(v)$ is the $F$-orbit of the parent edge of $v$.
	Since $O_n$ preserves the partition
	\smash{$S_n(v_0) = \bigsqcup_{i=0}^l D_n^{(i)}$},
	acts on each $D_n^{(i)}$ as the whole symmetric group 
	and for $i \neq j$ permutes the vertices of $D_n^{(i)}$ and $D_n^{(j)}$ independently, 
	the image of $\pi_n$ is $\prod_{i=0}^{l} \operatorname{Sym}(D_n^{(i)})$.
	Therefore $\pi_n$ induces an isomorphism
	\[
	O_n/U_n \cong \prod_{i=0}^{l} \operatorname{Sym}(D_n^{(i)}).
	\]
\end{notation}

\subsection{The group $O$ has no lattice}

Since $O < \mathcal{N}_F$ is open, the following theorem implies Theorem \ref{nolattice}.
\begin{theorem} \label{Onolattice}
	Assume $l < d-1$ and $F=\prod_{i=0}^l \Sym(D^{(i)})$. Then the group $O$ does not admit any lattice.
\end{theorem}

\begin{remark}[\emph{Strategy of the proof of Theorem \ref{Onolattice}}]
	Let by contradiction $\Gamma < O$ be a lattice.
	Denote its covolume by $c$. Similarly denote by $c_n$ the covolume of $\Gamma \cap O_n$ in $O_n$.
	We will now establish a lower bound for $c$ in terms of the index of $\Gamma_n := \pi_n(\Gamma \cap O_n)$ in \smash{$\prod_{i=0}^l \operatorname{Sym}(D_n^{(i)})$} and use it to get an estimate for the index
	$[\Sym(S_n(v_0)):\Gamma_n]$. Using this estimate we will, precisely as in \cite{bcgm12}, 
	find non-trivial elements in $\Gamma \cap U_n$ for very large $n$, which shows that $\Gamma$ cannot be discrete.
\end{remark}
\begin{notation}
	For a subgroup $G \leq \AutT$ we denote 
	\[\Aut_G(B_n(v_0)) = \{g \in \Aut(B_n(v_0)) \mid \exists h \in G\colon g=h|_{B_n(v_0)} \}.\]
\end{notation}

\paragraph{Covolume estimate.} Since $\Gamma$ is discrete there exists an $n_0 \in \mathbb{N}$ such that for all $n > n_0$ holds $\Gamma \cap U_n = \{1\}$.
That implies $\Gamma \cap O_n \cong \pi_n(\Gamma \cap O_n) =: \Gamma_n$ since $U_n = \ker(\pi_n)$.
We can make for $n \geq n_0$ the volume computation
\begin{align} \label{covolest}
\begin{split}
c \geq  c_n &= \operatorname{vol}(O_n/\Gamma \cap O_n) = \frac{\mu(O_n)}{|\Gamma \cap O_n|} = \frac{[O_n:O_0]}{|\Gamma_n|} = \frac{[O_n:U_n]}{|\Gamma_n|\cdot [U_0:U_n]} \\
&= \frac{|\prod_{i=0}^{l} \operatorname{Sym}(D_n^{(i)})|}{|\Gamma_n|\cdot |\operatorname{Aut}_{U(F)}(B_n(v_0))|}
= \frac{\big[\prod_{i=0}^{l} \operatorname{Sym}(D_n^{(i)}):\Gamma_n\big]}{|\operatorname{Aut}_{U(F)}(B_n(v_0))|}.
\end{split}
\end{align}


To prove that $\Gamma$ cannnot exist, we need a preparatory estimate.

\begin{proposition} \label{bigest}
	If $l<d-1$ and $d>2$ there exists a constant $C=C(d)$ such that for $n$ big enough holds
	\[
	[\operatorname{Sym}(S_n(v_0)) : \pi_n(\Gamma)] \leq C \cdot d^{|S_n(v_0)|}.
	\]
\end{proposition}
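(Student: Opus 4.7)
The plan is to start from the covolume estimate \eqref{covolest}, which gives
\[
  \Bigl[\prod_{i=0}^l \Sym(D_n^{(i)}) : \Gamma_n\Bigr] \leq c \cdot |\Aut_{G(F)}(B_n(v_0))|,
\]
and amplify it using the index of $\prod_i \Sym(D_n^{(i)})$ as a Young subgroup of $\Sym(S_n(v_0))$: writing $N = |S_n(v_0)|$ and $N_n^{(i)} = |D_n^{(i)}|$, this index is the multinomial coefficient $N!/\prod_i N_n^{(i)}!$, so
\[
  [\Sym(S_n(v_0)) : \Gamma_n] \leq c \cdot \frac{N!}{\prod_i N_n^{(i)}!} \cdot |\Aut_{G(F)}(B_n(v_0))|.
\]
The task is then to bound this right-hand side by $C \cdot d^N$ for all $n$ large enough.

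The next step is to compute each of the three factors explicitly. A short induction on $n$ using the color propagation rule (at a non-root vertex of color in $D^{(j)}$ there are $d^{(j)}-1$ children whose edge color lies in $D^{(j)}$ and $d^{(i)}$ children whose edge color lies in $D^{(i)}$ for $i \neq j$) gives the exact formula $N_n^{(i)} = d^{(i)} d^{n-1}$. By Lemma~\ref{lemorbrespgf}, an element of $\Aut_{G(F)}(B_n(v_0))$ is exactly an automorphism of $B_n(v_0)$ whose local permutation at every vertex preserves the $F$-orbits on colors; counting such local permutations and multiplying, one obtains
\[
  |\Aut_{G(F)}(B_n(v_0))| = A^{|V_n^{\mathrm{int}}|} \prod_{j=0}^l (d^{(j)})^{-B^{(j)}},
\]
where $A := \prod_i d^{(i)}!$, $|V_n^{\mathrm{int}}| = 1 + (d+1)(d^{n-1}-1)/(d-1)$ and $B^{(j)} = d^{(j)}(d^{n-1}-1)/(d-1)$. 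For the multinomial coefficient, expanding $(d+1)^N = \big(\sum_i d^{(i)}\big)^N$ and keeping only one term gives the slick bound $N!/\prod_i N_n^{(i)}! \leq (d+1)^N/\prod_i (d^{(i)})^{N_n^{(i)}}$.

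Substituting all three formulas, taking logarithms, and collecting the terms of order $N = (d+1)d^{n-1}$, the bound $[\Sym(S_n(v_0)) : \Gamma_n] \leq C \cdot d^N$ reduces, after elementary simplification, to the finite combinatorial inequality
\[
  d \sum_{i=0}^l d^{(i)} \log d^{(i)} - (d+1) \sum_{i=0}^l \log d^{(i)}! \geq (d^2-1) \log\!\left(1 + \tfrac{1}{d}\right),
\]
subject to $\sum_i d^{(i)} = d+1$ and $l+1 \leq d-1$ positive summands. I would verify this by direct optimization over partitions of $d+1$ with the allowed number of parts, using the Stirling-type estimate $x \log x - \log x! = \Theta(x)$ to show the left-hand side grows linearly in the total mass $d+1$.

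The main obstacle is precisely this final combinatorial inequality: it is the only place where the hypotheses $l < d-1$ and $d > 2$ are used, and it is tight in the sense that it genuinely fails both at $d=2$ and at $l = d-1$ (one can check for instance that $d=3$ with orbit partition $(2,1,1)$ falls short of the right-hand side). The argument must therefore exploit that the constraint $l+1 \leq d-1$ forces enough color mass to concentrate in orbits of size at least two so that the linear growth of $x \log x - \log x!$ dominates the correction $(d^2-1)\log(1+1/d)$, and this is where the bulk of the genuine work lies.
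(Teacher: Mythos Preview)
Your proposal is correct and follows essentially the same route as the paper: start from the covolume estimate, pass to the full symmetric group via the multinomial index, insert the explicit formula for $|\Aut_{G(F)}(B_n(v_0))|$, and reduce to a finite inequality in $d$ and the $d^{(i)}$. Your final inequality is exactly the paper's Lemma~\ref{smallest} after multiplying through by $d+1$, and your diagnosis of where the hypotheses $l<d-1$ and $d>2$ enter is accurate.

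One pleasant variation: you bound the multinomial coefficient by $N!/\prod_i N_n^{(i)}! \leq (d+1)^N/\prod_i (d^{(i)})^{N_n^{(i)}}$, whereas the paper applies Stirling separately to $((d+1)d^{n-1})!$ and each $(d^{(i)}d^{n-1})!$ and then has to observe that the dominant $n\cdot d^{n-1}$ terms cancel. Your entropy bound already lives at order $d^{n-1}$, so that cancellation happens for free; both routes produce the same coefficient comparison.

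For the final inequality itself, the paper does not use the optimization-plus-Stirling argument you sketch. It proves Lemma~\ref{smallest} by induction on partitions: defining $\Xi(x^{(0)},\dots,x^{(k)})$ as the difference of the two sides, it shows (Claim~1) that appending a part of size $1$ increases $\Xi$, and (Claim~2) that absorbing a part of size $1$ into an existing part increases $\Xi$ further, starting from the base cases $(3)$ and $(2,2)$. Claim~1 requires a somewhat delicate argument with three derivatives of an auxiliary function. Your heuristic ``linear growth of $x\log x - \log x!$ dominates'' is the right intuition but would need sharpening, since $(d^2-1)\log(1+1/d)\sim d$ is itself linear in $d$; it is a competition of constants, not of growth rates, and this is indeed where the work lies.
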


\begin{remark}
	One can check that if $l=d-1$ then the inequality is reversed.
	This corresponds to exactly $l$ of the numbers $d^{(i)}$ being equal to $1$ and the remaining one equal to $2$.
	
	The case $F=\Sym(D)$, in particular the case $d=2$ and $l=0$, is covered in \cite{bcgm12}.
\end{remark}

Before going to the quite technical proof of this Proposition we will derive Theorem \ref{Onolattice} from it.
We rephrase the key proposition from \cite{bcgm12}, which roughly says that subgroups of a huge finite symmetric group satisfying a certain index bound must contain one large alternating group or a product of many not so small alternating groups.

\begin{proposition}[\emph{Proposition 4.1 from \cite{bcgm12}}] \label{alternative}
	Let $c,d > 0$ be positive real numbers and $0<\alpha<1$. There exists an integer $n_1$ depending on $c,d$ and $\alpha$ such that for every finite set $K$ with $|K|\geq n_1$  every subgroup $\Lambda \leq \Sym(K)$ with $$[\Sym(K):\Lambda] \leq c \cdot d^{|K|}$$ satisfies one of the following (non-exclusive) alternatives.
	\begin{enumerate}
		\item There exists a subset $Z \subset K$ with $|Z| > \frac{|K|}{d}+2$ and $\operatorname{Alt}(Z) \leq \Lambda$.
		\item There exist $d$ disjoint subsets $Z_1,\dots,Z_d \subset K$ which satisfy
		$$\Big|\bigsqcup_{j=1}^d Z_j\Big| > (1-\alpha)|K| \quad \text{and} \quad \prod_{j=1}^d \operatorname{Alt}(Z_j) \leq \Lambda.$$
	\end{enumerate}
\end{proposition}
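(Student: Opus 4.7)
The plan is to exploit the orbit structure of $\Lambda$ acting on $K$, combined with classical (Bochert-type) lower bounds on the index of primitive subgroups of symmetric groups. First I decompose $K = K_1\sqcup\dots\sqcup K_r$ into $\Lambda$-orbits with $|K_1|\geq\dots\geq|K_r|$, and write $p_i=|K_i|/|K|$. Since $\Lambda$ preserves this partition it lies in the Young subgroup $\prod_i\Sym(K_i)$, so
\[ [\Sym(K):\Lambda]\;\geq\;\binom{|K|}{|K_1|,\dots,|K_r|}\;\geq\;\frac{2^{|K|\cdot H(p)}}{(|K|+1)^r}, \]
where $H(p)=-\sum p_i\log_2 p_i$ is the Shannon entropy. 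Combined with the hypothesis $[\Sym(K):\Lambda]\leq c\cdot d^{|K|}$, this forces $H(p)\leq \log_2 d + O((\log |K|)/|K|)$. An elementary convexity argument on the Shannon entropy then yields the dichotomy, valid for $|K|$ larger than some threshold depending on $c$, $d$, $\alpha$: either $p_1 > 1/d + 3/|K|$ (i.e.\ $|K_1|>|K|/d+2$) or $p_1+\dots+p_d > 1-\alpha$.

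Next, on each orbit $K_i$ that is still large, I analyze the transitive action of $\Lambda$ on $K_i$. The image $\Lambda_i\leq \Sym(K_i)$ has index at most $c\cdot d^{|K|}$ in $\Sym(K_i)$ once we absorb the $\prod_{j\neq i}|K_j|!$ factors, and in particular its index is only exponential in $|K_i|$. I then invoke Bochert's classical bound (or its sharpening by Praeger--Saxl or Liebeck): a primitive subgroup of $\Sym(n)$ not containing $\Alt(n)$ has index at least $\lfloor (n+1)/2\rfloor!$, which is super-exponential in $n$. Hence for $|K_i|$ large enough, if $\Lambda_i$ acts primitively on $K_i$ then $\Alt(K_i)\leq \Lambda_i$. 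If instead $\Lambda_i$ preserves a nontrivial system of blocks, the multinomial cost associated to that block partition adds to the multinomial cost from the orbit partition, and by iterating the entropy/Bochert argument on blocks one shows that any allowable imprimitive configuration must in fact already be primitive on all but a bounded number of points. Thus, on each of the relevant orbits $K_i$, there exists a $\Lambda$-invariant subset $Z_i\subseteq K_i$ with $|K_i\setminus Z_i|$ bounded by a constant depending only on $c,d,\alpha$, such that the image of $\Lambda$ in $\Sym(Z_i)$ contains $\Alt(Z_i)$.

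Finally I lift ``$\Alt(Z_i)$ lies in the image'' to ``$\Alt(Z_i)$ lies in $\Lambda$''. The kernel of the restriction map $\Lambda_i\to\Sym(Z_i)$ is a normal subgroup of $\Lambda_i$ acting trivially on $Z_i$; for $|Z_i|\geq 5$ the group $\Alt(Z_i)$ is simple, so taking the derived subgroup of the preimage of $\Alt(Z_i)$ inside $\Lambda_i$ produces a copy of $\Alt(Z_i)$ sitting inside $\Lambda_i$, and hence inside $\Lambda$ since distinct orbits commute elementwise inside $\prod\Sym(K_j)$. In the first case of the dichotomy, taking $Z:=Z_1$ gives alternative (1), since $|Z_1|\geq |K_1|-O(1) > |K|/d+2$ once $|K|$ is large. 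In the second case, the subsets $Z_1,\dots,Z_d\subset K_1,\dots,K_d$ are automatically disjoint and satisfy $|\bigsqcup Z_j|\geq \sum_{j\leq d}|K_j| - O(d) > (1-\alpha)|K|$, giving alternative (2).

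The principal obstacle is step two: carefully managing the budget $c\cdot d^{|K|}$ against the combined multinomial cost of the orbit partition and any $\Lambda$-invariant block system within an orbit. One must verify that, no matter how the ``entropy budget'' is split between inter-orbit and intra-orbit imprimitivity, there remains enough slack to force a primitive action of $\Lambda$ on a cofinite part of each large orbit, so that Bochert's bound applies. The rest (the entropy dichotomy in step one and the lifting argument in step three) is comparatively soft and uses only standard facts about symmetric and alternating groups.
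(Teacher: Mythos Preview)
The paper does not prove this proposition; it is quoted verbatim from \cite{bcgm12} and used as a black box in the proof of Theorem~\ref{Onolattice}. So there is no proof in the present paper to compare against, and I can only assess your argument on its own merits.

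Your overall architecture (orbit decomposition plus multinomial/entropy bounds, then Bochert--Liebeck to force large alternating factors) is the right shape, and indeed this is essentially how the argument in \cite{bcgm12} proceeds. However, your write-up has two genuine gaps.

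First, the sentence ``there exists a $\Lambda$-invariant subset $Z_i\subseteq K_i$ with $|K_i\setminus Z_i|$ bounded'' cannot be right as stated: $K_i$ is a single $\Lambda$-orbit, so its only $\Lambda$-invariant subsets are $\emptyset$ and $K_i$ itself. Whatever comes out of your block-system analysis in step~2, it is not a proper $\Lambda$-invariant subset.

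Second, and more seriously, your lifting argument in step~3 does not work. Knowing that $\operatorname{Alt}(K_i)\leq \Lambda_i=\mathrm{proj}_i(\Lambda)$ does \emph{not} imply $\operatorname{Alt}(K_i)\leq \Lambda$, and taking a derived subgroup of a preimage does not repair this: for instance the diagonal $\{(\sigma,\sigma):\sigma\in\Sym(n)\}\leq \Sym(n)\times\Sym(n)$ surjects onto each factor, yet contains no nontrivial element supported on a single factor. What actually rules out such diagonals is the index bound: if $N_i:=\Lambda\cap(\Sym(K_i)\times\{1\})$ were trivial, then $\Lambda$ would embed in $\prod_{j\neq i}\Sym(K_j)$, forcing $[\Sym(K):\Lambda]\geq |K_i|!$, which for $|K_i|$ a fixed fraction of $|K|$ eventually exceeds $c\cdot d^{|K|}$. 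Since $\mathrm{proj}_i(N_i)$ is normal in $\Lambda_i\supseteq\operatorname{Alt}(K_i)$ and $\operatorname{Alt}(K_i)$ is simple, nontriviality of $N_i$ then gives $\operatorname{Alt}(K_i)\leq N_i\leq\Lambda$. You need this Goursat-type argument, applied to each of the large orbits, and you need to check that the relevant $|K_i|$ are indeed large enough (a fixed fraction of $|K|$) for it to bite; your entropy dichotomy as stated does not obviously guarantee this for all $d$ orbits in the second branch.

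Finally, the imprimitive case in step~2 (``by iterating the entropy/Bochert argument on blocks one shows\dots'') is where the real work lies, and you have only gestured at it. This is precisely the ``principal obstacle'' you flag yourself, and it is not yet an argument.
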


The conclusion of the proof that $O$ does not have a lattice works exactly as in \cite{bcgm12}. For completeness we reproduce it here.

\begin{proof}[Proof of Theorem \ref{Onolattice}]
	Let $\alpha < 1/d^2$. By Proposition \ref{bigest} we may apply Proposition \ref{alternative} to $K=S_n(v_0)$ and $\Lambda = \Gamma_n$ with $C,d$ and $\alpha$ for some fixed
	$n \geq \max\{n_0+2,n_1\}.$ Note that the choice of $n$ implies $\Gamma \cap U_{n-2} = \{1\}$.
	
	We introduce some terminology. Vertices with same parent are called siblings. Grandparents and grandchildren are defined in the obvious way.
	
	First assume that $\Gamma_n$ satisfies Alternative 1.
	Then, by the pigeonhole principle there need to exist either three siblings 
	$v_1,v_2,v_3$ or two pairs of siblings $w_1,w_2$ and $w_3,w_4$ in the set $Z$.
	Since $\Alt(Z) \leq \Gamma_n$, the
	corresponding permutation $(v_1 \, v_2 \, v_3) \in \Alt(Z)$ or
	$(w_1 \, w_2)(w_3 \, w_4) \in \Alt(Z)$ is in $\Gamma_n$.
	These permutations only permute amongst siblings, the preimage $\gamma$ under $\pi_n$ of this element must be a nontrivial element in $\Gamma \cap \Fix_{\AutT}(B_{n-1}(v_0))$ such that for all vertices $v$ outside of $B_n(v_0)$ the local permutation $\prm_{\gamma,v}$ is in $F$.
	Since $F$ is a Young subgroup, by Lemma 3.3 in \cite{lb16}, or alternatively by Proposition \ref{lemsameorb}, the element $\gamma$ is contained in $U_{n-1} \cap \Gamma$. This contradicts $n-1 \geq n_0$.
	
	Assume now that $\Gamma_n$ satisfies Alternative 2. We can assume in addition that $\Gamma_n$ does not contain a nontrivial element that only permutes siblings,
	because otherwise we get a contradiction as above.
	This means that every $Z_j$ contains at most one pair of siblings.
	We call siblings that are contained in the same $Z_j$ twins.
	Note that there are as many $Z_j$ as every parent has children.
	So, if a parent does not have twins, but still all its children
	are contained $Z:=\bigsqcup_{j=1}^d Z_j$,
	then every $Z_j$ contains exactly one of their children.
	Note that there are at most $d$ parents of twins and thus also at most $d$ grandparents of twins.
	
	There are at most $\alpha \cdot |S_{n}(v_0)|$ vertices in $S_{n-1}(v_0)$ having a grandchild that is not in $Z$. Since $\alpha < 1/d^2$ and $|S_{n}(v_0)|$, this means that at least $(1/d^2-\alpha)\cdot |S_{n}(v_0)|$ grandparents in $S_{n-1}(v_0)$ have all their grandchildren in $Z$.
	If $n$ is such that $(1/d^2-\alpha)\cdot |S_{n}(v_0)| \geq d+2$, there are at least two grandparents $g_1,g_2 \in S_{n-2}(v_0)$ all of whose grandchildren are in $Z$ but who are not grandparents of twins.
	For each of the two $g_i$, we can construct an element in $\prod_{j=1}^d \Sym(Z_j)$ by switching two of their children in a way that the grandchildren do not change $Z_j$ they are contained in. By composing these two elements, we get an element in $\prod_{j=1}^l \operatorname{Alt}(Z_j) \leq \Gamma_n$ whose preimage under $\pi_n$ lies in $U_{n-2}\cap \Gamma$, contradiction.
\end{proof}

In the proof of Proposition \ref{bigest} we will need the following formulae.

\begin{lemma} \label{lemexplcard}
	For $n \geq 1$ holds
	\begin{align*}
	\Big|\prod_{i=0}^l \operatorname{Sym}(D_n^{(i)})\Big| &= \prod_{i=0}^l (d^{(i)} \cdot d^{n-1})! \\
	|\operatorname{Aut}_{U(F)}(B_n(v_0))| &= \left( \prod_{i=0}^l d^{(i)}! \right) \cdot
	\left(\prod_{i=0}^{l}\frac{d^{(i)}!^{(d+1)}}{d^{(i)^{d^{(i)}}}}\right)^{\frac{d^{n-1}-1}{d-1}}.
	\end{align*}
\end{lemma}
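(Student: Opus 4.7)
My plan is to treat the two formulae separately.

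For the first formula, I would establish that $|D_n^{(i)}|=d^{(i)}d^{n-1}$ for all $n\geq 1$ by induction on $n$. The base case $n=1$ is immediate since $v_0$ has exactly $d^{(i)}$ children of colour in $D^{(i)}$. For the inductive step, a vertex $w\in S_{n-1}(v_0)$ of colour $\chi\in D^{(i)}$ has its $d$ children coloured bijectively by $D\setminus\{\chi\}$, contributing $d^{(i)}-1$ children in $D_n^{(i)}$, while a vertex of colour in $D^{(j)}$ with $j\neq i$ contributes $d^{(i)}$. Summing gives $|D_n^{(i)}|=d^{(i)}|S_{n-1}(v_0)|-|D_{n-1}^{(i)}|=d^{(i)}(d+1)d^{n-2}-d^{(i)}d^{n-2}=d^{(i)}d^{n-1}$, and the first formula follows from $|\Sym(D_n^{(i)})|=(d^{(i)}d^{n-1})!$.

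For the second formula the key step is to characterize $\Aut_{G(F)}(B_n(v_0))$ as the set of all $g\in\Aut(B_n(v_0))$ whose local permutation $\prm_{g,v}$ preserves the $F$-orbits $D^{(0)},\dots,D^{(l)}$ at every vertex $v$ of depth at most $n-1$. The inclusion ``$\subset$'' is Lemma \ref{lemorbrespgf}, while ``$\supset$'' follows from Lemma \ref{lemextuf}, which allows such a $g$ to be extended to an element of $G(F)$ outside $B_n(v_0)$. Write $P\leq\Sym(D)$ for the orbit-preserving subgroup, so that $|P|=\prod_{i=0}^l d^{(i)}!$.

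I would then enumerate such automorphisms by specifying their local actions depth by depth. At the root there are $|P|$ options. At a non-root non-leaf vertex $v$ with $\col(v)\in D^{(i)}$, the parent's choice has already fixed $g(v)$ and hence $\col(g(v))\in D^{(i)}$; the remaining freedom is an element of $P$ sending $\col(v)$ to $\col(g(v))$, of which there are $|P|/d^{(i)}$. These choices are independent across vertices at the same depth, so combining with the formula for $|D_k^{(i)}|$ from the first part yields
\[|\Aut_{G(F)}(B_n(v_0))|=|P|\cdot\prod_{k=1}^{n-1}\prod_{i=0}^l\left(\frac{|P|}{d^{(i)}}\right)^{d^{(i)}d^{k-1}}.\]

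The remaining step is purely algebraic: for fixed $k$, $\prod_i(|P|/d^{(i)})^{d^{(i)}d^{k-1}}=A^{d^{k-1}}$ where $A=|P|^{d+1}/\prod_i d^{(i)^{d^{(i)}}}=\prod_i d^{(i)}!^{d+1}/d^{(i)^{d^{(i)}}}$ (using $\sum_i d^{(i)}=d+1$), and $\sum_{k=1}^{n-1}d^{k-1}=(d^{n-1}-1)/(d-1)$ gives the stated form. I expect the main obstacle to be verifying the independence of the local-action choices: one must check that every orbit-preserving local permutation at $v$ sending $\col(v)$ to a prescribed $\chi\in D^{(i)}$ is actually realised by some element of $G(F)$ compatible with the choices already made at smaller depths. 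This is precisely where an inductive application of Lemma \ref{lemextuf} is needed to ensure that the naive counting is not over-counting or leaving out cases.
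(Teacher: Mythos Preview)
Your proposal is correct and follows essentially the same approach as the paper. The only cosmetic differences are that the paper obtains $|D_n^{(i)}|=d^{(i)}d^{n-1}$ by a one-line symmetry argument (each colour $\chi\in D$ occurs equally often in $S_n(v_0)$, hence $d^{n-1}$ times) rather than your induction, and it packages the second count as a recursion via the kernel of the restriction map $\rho_n\colon\Aut_{G(F)}(B_n(v_0))\twoheadrightarrow\Aut_{G(F)}(B_{n-1}(v_0))$ instead of your explicit depth-by-depth enumeration; the per-vertex count $|P|/d^{(i)}=(d^{(i)}-1)!\prod_{j\neq i}d^{(j)}!$ and the use of Lemmas~\ref{lemextuf} and~\ref{lemorbrespgf} are identical.
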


\begin{proof}
	By symmetry, for every $\chi \in D$ the number of leaf edges $e$ of $B_n(v_0)$ with $\col(e)=\chi$ is 
	$\frac{|S_n(v_0)|}{|D|}=d^{n-1}$. This implies $d_n^{(i)}=d^{(i)} \cdot d^{n-1}$ and
	\[
	\Big|\prod_{i=0}^l \operatorname{Sym}(D_n^{(i)})\Big| = \prod_{i=0}^l (d^{(i)} \cdot d^{n-1})!.
	\]
	Restriction to $B_{n-1}(v_0)$ defines a surjective homomorphism $$\rho_n \colon \operatorname{Aut}_{U(F)}(B_n(v_0)) \twoheadrightarrow \operatorname{Aut}_{U(F)}(B_{n-1}(v_0)).$$
	The kernel of $\rho_n$ consists of all those automorphisms of $B_{n}(v_0)$ which 
	fix $B_{n-1}(v_0)$.
	
	Incident to each leaf of label $D^{(i)}$ in $B_{n-1}(v_0)$ are $d^{(j)}$ leaves of $B_{n}(v_0)$ with label $D^{(j)}$ for $j\neq i$ and $d^{(i)}-1$ leaves with label $D^{(i)}$.
	Thus we get $|\operatorname{Aut}_{U(F)}(B_{1}(v_0))|=\prod_{i=0}^l d^{(i)}!$ and for $n \geq 2$ we see
	\begin{align*}
	|\operatorname{Ker}(\rho_n)| &= \prod_{i=0}^l \prod_{D_{n-1}^{(i)}} (d^{(i)}-1)! \prod_{j \neq i} d^{(j)}!
	=  \left(\prod_{i=0}^{l}\frac{d^{(i)}!^{(d+1)}}{d^{(i)^{d^{(i)}}}}\right)^{d^{n-2}}.
	\end{align*}
	Inductively we get
	\begin{align*}
	|\operatorname{Aut}_{U(F)}(B_n(v_0))|  
	&= |\operatorname{Aut}_{U(F)}(B_{n-1}(v_0))| \cdot |\operatorname{Ker}(\rho_n)| \\
	&= |\operatorname{Aut}_{U(F)}(B_{1}(v_0))| \cdot \prod_{k=2}^n |\operatorname{Ker}(\rho_k)| \\
	&= |\operatorname{Aut}_{U(F)}(B_{1}(v_0))| \cdot \prod_{k=2}^n \left(\frac{\prod_{j=0}^{l} d^{(j)}!^{(d+1)}}{\prod_{i=0}^{l} d^{(i)^{d^{(i)}}}}\right)^{d^{k-2}} \\
	&= \left( \prod_{i=0}^l d^{(i)}! \right) \cdot
	\left(\prod_{i=0}^{l}\frac{d^{(i)}!^{(d+1)}}{d^{(i)^{d^{(i)}}}}\right)^{\frac{d^{n-1}-1}{d-1}}.
	\end{align*}
\end{proof}

\begin{proof}[Proof of Proposition \ref{bigest}]
	Recall Estimate (\ref{covolest}), namely
	\[
	\Big[\prod_{i=0}^l \operatorname{Sym}(D_n^{(i)}) : \pi_n(\Gamma)\Big] \leq c \cdot |\operatorname{Aut}_{U(F)}(B_n(v_0))|.
	\]
	It is equivalent to
	\[
	[\operatorname{Sym}(S_n(v_0)) : \pi_n(\Gamma)] \leq c \cdot |\hspace{-2pt}\operatorname{Aut}_{U(F)}(B_n(v_0))| \cdot \big[\operatorname{Sym}(S_n(v_0)) : \prod_{i=0}^l \operatorname{Sym}(D_i^{n})\big]
	\]
	and therefore it suffices to show that
	\begin{equation*}\label{inequtoprove}
	|\operatorname{Aut}_{U(F)}(B_n(v_0))| \cdot \big[\operatorname{Sym}(S_n(v_0)) : \prod_{i=0}^l \operatorname{Sym}(D_n^{(i)})\big] \leq C \cdot d^{|B_n(v_0)|}.
	\end{equation*}
	We write out this inequality explicitly in the values calculated 
	in Lemma \ref{lemexplcard} and use the $O$-Notation.
	The term $\prod_{i=0}^l d^{(i)}!$ is constant, so the inequality is equivalent to
	\[
	\left(\prod_{i=0}^{l}\frac{d^{(i)}!^{(d+1)}}{d^{(i)^{d^{(i)}}}}\right)^{\frac{d^{n-1}-1}{d-1}}
	\cdot \frac{((d+1)d^{n-1})!}{\prod_{i=0}^l d_n^{(i)}!} \leq O(1) \cdot d^{(d+1)d^{n-1}}
	\]
	and after taking the logarithm it is equivalent to
	\begin{align*}
	\frac{d^{n-1}-1}{d-1}& \sum_{i=0}^l \left( (d+1)\ln(d^{(i)}!) - d^{(i)}\ln(d^{(i)}) \right) 
	+ \ln\left(((d+1)d^{n-1})!\right) \\
	& \qquad \qquad \qquad \qquad \qquad \leq (d+1)d^{n-1} \ln(d) + \sum_{i=0}^l \ln(d_n^{(i)}!) + O(1).
	\end{align*}
	We need to deal with factorials of powers. For that we invoke
	Stirling's estimate
	\[
	\sqrt{2\pi} \cdot m^{m+\frac{1}{2}} e^{-m} \leq m! \leq \sqrt{e} \cdot m^{m+\frac{1}{2}} e^{-m}
	\]
	showing that
	\[
	\ln(m!)=\Big(m+\frac{1}{2}\Big) \ln(m) - m + O(1).
	\]
	It yields
	\begin{align*}
	\ln\left(((d+1)d^{n-1})!\right) &= \Big((d+1)d^{n-1}+\frac{1}{2}\Big) \ln((d+1)d^{n-1}) 
	\\ & \quad \quad - (d+1)d^{n-1} + O(1) \\
	&= n \cdot d^{n-1}\left((d+1)\ln(d)\right) \\
	& \quad \quad + d^{n-1}(d+1)(\ln(d+1)-\ln(d)-1) + O(n)
	\end{align*}
	and
	\begin{align*}
	\ln(d_n^{(i)}!) &= \ln\left( (d^{(i)}d^{n-1})! \right) \\
	&= \Big(d^{(i)}d^{n-1} + \frac{1}{2}\Big) \ln(d^{(i)}d^{n-1}) - d^{(i)}d^{n-1}
	+ O(1)\\
	&= n \cdot d^{n-1} \left( d^{(i)} \ln(d) \right) \\
	& \quad + d^{n-1} d^{(i)} \left( \ln(d^{(i)}) - \ln(d) -1 \right) + O(n).
	\end{align*}
	Thus we obtain
	\begin{align*}
	&\frac{d^{n-1}}{d-1} \sum_{i=0}^l \left( (d+1)\ln(d^{(i)}!) - d^{(i)}\ln(d^{(i)}) \right) \\
	&\quad \quad +  n \cdot d^{n-1}\left((d+1)\ln(d)\right) \\
	& \quad \quad+ d^{n-1}(d+1)(\ln(d+1)-\ln(d)-1)  + O(n)\\
	&\leq (d+1)d^{n-1} \ln(d)\\
	& \quad \quad+ \sum_{i=0}^l \left( n \cdot d^{n-1} \left( d^{(i)} \ln(d) \right) + d^{n-1} d^{(i)} \left( \ln(d^{(i)}) - \ln(d) -1 \right) \right).
	\end{align*}
	The dominant term $n d^{n-1}$ appears on both sides with the same coefficient $(d+1)\ln(d)$, so we can eliminate it and compare the coefficients of the newly dominating term, namely $d^{n-1}$.
	They are
	\[\frac{1}{d-1}\sum_{i=0}^l \left( (d+1)\ln(d^{(i)}!) - d^{(i)}\ln(d^{(i)}) \right) + (d+1)(\ln(d+1)-\ln(d)-1)\]
	on the left and
	\[(d+1) \ln(d) + \sum_{i=0}^l \left( d^{(i)} \left( \ln(d^{(i)}) - \ln(d) -1 \right) \right) 
	= \sum_{i=0}^l \left( d^{(i)} \ln(d^{(i)}) \right) - (d+1) \]
	on the right.
	To conclude the proof it suffices to show that this left dominant coefficient is strictly smaller than this right dominant coefficient, which is equivalent to
	\begin{align*}
	\frac{d+1}{d-1}\sum_{i=0}^l\big(\ln(d^{(i)}!)\big) + (d+1)(\ln(d+1)-\ln(d)) < \frac{d}{d-1}\sum_{i=0}^l d^{(i)} \ln(d^{(i)}),
	\end{align*}
	i.e.
	\[
	(d-1)(\ln(d+1) - \ln(d)) < \frac{d}{d+1}\sum_{i=0}^l d^{(i)}\ln(d^{(i)}) - \sum_{i=0}^l \ln(d^{(i)}!) .
	\]
	This inequality is precisely the content of the next lemma.
\end{proof}

\begin{lemma} \label{smallest}
	If $l < d-1$ and $d >2$, then
	\[
	(d-1)(\ln(d+1) - \ln(d)) < \frac{d}{d+1}\sum_{i=0}^l d^{(i)}\ln(d^{(i)}) - \sum_{i=0}^l \ln(d^{(i)}!) .
	\]
\end{lemma}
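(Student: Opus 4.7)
The plan is to reduce the inequality to checking it on two very explicit partitions of $d+1$, and then to verify those cases directly. Set $\phi(a) := \frac{d}{d+1} a \ln a - \ln(a!)$, so that the right-hand side equals $\sum_{i=0}^l \phi(d^{(i)})$; note $\phi(1) = 0$. My first step is the monotonicity statement $\phi(a) > \phi(a-1)$ for every integer $a \in \{2, \ldots, d+1\}$. A direct manipulation yields
\[
(d+1)\bigl[\phi(a) - \phi(a-1)\bigr] = d(a-1)\ln\!\bigl(a/(a-1)\bigr) - \ln a.
\]
Since $(a-1)\ln(a/(a-1))$ is increasing in $a$ with minimum $\ln 2$ at $a=2$, this quantity is bounded below by $d\ln 2 - \ln(d+1)$, which is positive for every $d \geq 2$ because $d+1 < 2^d$.

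With monotonicity in hand, I argue that the minimum of the right-hand side, viewed as a function of the partition $(d^{(0)}, \ldots, d^{(l)})$ of $d+1$ with $l+1 \leq d-1$ positive parts, is attained precisely when $d-1$ parts are used. If fewer parts are used, some part $c$ is $\geq 2$, and splitting $c$ into the pair $c-1,\,1$ (keeping the sum fixed and adding one more part) strictly decreases the sum by $\phi(c) - \phi(c-1) > 0$, using $\phi(1) = 0$. Partitions of $d+1$ into exactly $d-1$ positive parts are just those in which the non-negative excesses $b_i = d^{(i)} - 1$ sum to $2$, so there are only two such shapes: $(3,1,\ldots,1)$ with $d-2$ ones, or $(2,2,1,\ldots,1)$ with $d-3$ ones. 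Thus the minimum of the right-hand side equals $\min\bigl(\phi(3),\,2\phi(2)\bigr)$.

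It then remains to verify the inequality in both cases. For $(2,2,1,\ldots,1)$ the right-hand side is $2\phi(2) = \frac{2(d-1)\ln 2}{d+1}$, and the inequality reduces to $\frac{2\ln 2}{d+1} > \ln(1+1/d)$; applying $\ln(1+x) < x$ leaves the elementary $2d\ln 2 > d+1$, which holds for every $d \geq 3$. For $(3,1,\ldots,1)$ the right-hand side is $\phi(3) = \frac{3d\ln 3 - (d+1)\ln 6}{d+1}$. Multiplying the desired inequality by $d+1$ and bounding $(d^2-1)\ln(1+1/d) < d$ via $\ln(1+1/d) < 1/d$, it suffices to show $3d\ln 3 - (d+1)\ln 6 > d$, i.e.\ $d(\ln(9/2) - 1) > \ln 6$, which holds for every $d \geq 4$; the sole remaining case $d = 3$ is an immediate numerical verification. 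The main obstacle I anticipate is the uniform monotonicity of $\phi$ over the full range $2 \leq a \leq d+1$, since this is precisely what allows the splitting reduction to run all the way through and pin down the two explicit minimizing partitions—everything after that is bookkeeping on two concrete expressions.
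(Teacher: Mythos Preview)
Your argument is correct and takes a genuinely different route from the paper's. The paper defines $\Xi$ as the difference of the two sides and runs an induction on $d$: it shows that appending a $1$ to the partition (which increases $d$ by one) strictly increases $\Xi$ (their Claim~1), and that incrementing an existing part increases $\Xi$ even more (their Claim~2), so every admissible partition is built from the base cases $(3)$ at $d=2$ (where equality holds) and $(2,2)$ at $d=3$. Your approach instead keeps $d$ fixed and minimises the right-hand side over all admissible partitions by repeatedly splitting a part $c \mapsto (c-1,1)$, using only the monotonicity $\phi(a)>\phi(a-1)$; this forces the minimum to occur at one of the two $(d-1)$-part shapes $(3,1,\dots,1)$ or $(2,2,1,\dots,1)$, which you then check directly.

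The payoff of your route is that the single monotonicity estimate $(d+1)[\phi(a)-\phi(a-1)] = d(a-1)\ln\!\bigl(a/(a-1)\bigr) - \ln a \geq d\ln 2 - \ln(d+1) > 0$ replaces the paper's Claim~1, whose proof requires controlling a function $\xi$ through its third derivative. The paper's inductive scheme, on the other hand, makes transparent why $(3)$ and $(2,2)$ are the true borderline configurations across all values of $d$ simultaneously. Both approaches end with the same two elementary verifications.
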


\begin{proof}
	For $d=2$ and $l=0$ we have equality. If $l=1$ and $(d^{(0)},d^{(1)})=(2,2)$ the inequality is true. Inductively and by symmetry in $d^{(0)},\dots,d^{(k)}$, the lemma will follow from two claims.
	
	For positive integers $x^{(0)},\dots,x^{(k)}$ with $\sum_{i=0}^k x^{(i)}=x+1$ and $k<x$ define the function $\Xi$ as the difference of the right hand side minus the left hand side, i.e.
	$$\Xi(x^{(0)},\dots,x^{(k)}) = \frac{x}{x+1}\sum_{i=0}^k x^{(i)}\ln(x^{(i)}) - \sum_{i=0}^k \ln(x^{(i)}!) + (x-1)\ln\left(\frac{x}{x+1}\right).$$
	
	\emph{Claim 1:} $\Xi(x^{(0)},\dots,x^{(k)},1) > \Xi(x^{(0)},\dots,x^{(k)})$
	
	\emph{Proof:} 
	Appending $1$ to the vector $(x^{(0)},\dots,x^{(k)})$ does not change the sums $\sum_{i=0}^k x^{(i)}\ln(x^{(i)})$ and $\sum_{i=0}^k \ln(x^{(i)}!)$, but increments $x$ by $1$. 
	After obvious simplifications and rearrangings of terms the desired inequality $\Xi(x^{(0)},\dots,x^{(k)},1) > \Xi(x^{(0)},\dots,x^{(k)})$ is equlivalent to
	\[
	\left(\frac{x+1}{x+2}-\frac{x}{x+1}\right)\sum_{i=0}^k x^{(i)}\ln(x^{(i)}) > x \ln\left(\frac{x+2}{x+1}\right) - (x-1) \ln\left(\frac{x+1}{x}\right).
	\]
	Because the function $(x^{(0)},\dots,x^{(k)})\mapsto \sum_{i=0}^k x^{(i)}\ln(x^{(i)})$ is convex and symmetric in $x^{(0)},\dots,x^{(k)}$, it attains its minimum if all the $x^{(i)}$ are the same, i.e.
	$$\sum_{i=0}^k x^{(i)}\ln(x^{(i)}) \geq (k+1)\frac{x+1}{k+1}\ln\left(\frac{x+1}{k+1}\right) \geq (x+1)\ln\left(\frac{x+1}{x-1}\right).$$
	We estimate the term dependent on the $x^{(i)}$ from below
	\begin{align*}
	\left(\frac{x+1}{x+2}-\frac{x}{x+1}\right)\sum_{i=0}^k x_i\ln(x_i) &\geq \left(\frac{x+1}{x+2}-\frac{x}{x+1}\right) (x+1)\ln\left(\frac{x+1}{x-1}\right) \\
	&= \frac{1}{x+2} \ln\left(\frac{x+1}{x-1}\right)
	\end{align*}
	and are left with showing that
	\[
	\xi(x):=\frac{1}{x+2} \ln\left(\frac{x+1}{x-1}\right) - x \ln\left(\frac{x+2}{x+1}\right) + (x-1) \ln\left(\frac{x+1}{x}\right) > 0.
	\]
	Oberve that
	\[
	\lim_{x \to \infty} \, x \ln\left(\frac{x+2}{x+1}\right) = \lim_{x \to \infty} \, \ln\left(\frac{\left(1+\frac{1}{x+1}\right)^{x+1}}{1+\frac{1}{x+1}}\right) = 1
	\]
	and similarly
	\[
	\lim_{x \to \infty} \, (x-1) \ln\left(\frac{x+1}{x}\right)= 1.
	\]
	Therefore $\xi(x)$ converges to $0$ as $x$ approaches infinity.
	The first three derivatives of $\xi$ are
	\begin{align*}
	\xi'(x) &=  -\frac{\ln \left(\frac{x+1}{x-1}\right)}{(x+2)^2}-\ln \left(\frac{x}{x+1}\right)-\frac{x^2-x+2}{x (x+2) \left(x^2-1\right)}
	-\ln \left(\frac{x+2}{x+1}\right)\\
	\xi''(x) &= -\frac{x^4-10x^3-15x^2+8x+4}{x^2 (x+2)^2 \left(x^2-1\right)^2}+2 \cdot \frac{\ln \left(\frac{x+1}{x-1}\right)}{(x+2)^3} \\
	\xi'''(x) &= -2 \cdot \frac{23 x^5+25 x^4-25 x^3-9 x^2+6 x+4}{x^3 (x+2)^2 \left(x^2-1\right)^3}-6 \cdot \frac{\ln \left(\frac{x+1}{x-1}\right)}{(x+2)^4}.
	\end{align*}
	Since $\xi'''$ is strictly negative for $x \geq 2$, we know that $\xi'$ is strictly concave. 
	In addition $\xi'$ converges to zero, so it must be negative.
	This implies that $\xi$ is a strictly decreasing function converging to zero.
	Therefore $\xi$ must be positive and Claim 1 follows.
	
	\emph{Claim 2:} $\Xi(x^{(0)},\dots,x^{(k-1)},x^{(k)}+1) > \Xi(x^{(0)},\dots,x^{(k)},1)$
	
	\emph{Proof:} The inequality $\Xi(x^{(0)},\dots,x^{(k-1)},x^{(k)}+1) > \Xi(x^{(0)},\dots,x^{(k)},1)$ is, after obvious simplifications, equivalent to $$\frac{x}{x+1}(x^{(k)}+1)\ln(x^{(k)}+1) - \ln((x^{(k)}+1)!) > \frac{x}{x+1} x^{(k)} \ln(x^{(k)}) - \ln(x^{(k)}!),$$
	which after exponentiating is equivalent to
	\[
	\left( \frac{(x^{(k)}+1)^{x^{(k)}+1}}{x^{(k)^{x^{(k)}}}} \right)^{\frac{x}{x+1}} > x^{(k)}+1.
	\]
	We estimate the left hand side from above by setting $x=x^{(k)}$ and get
	\[
	\left( 1+ \frac{1}{x}\right)^{x^2} > x+1
	\]
	which is true for $x\geq 2$ because $\left( 1+ \frac{1}{x}\right)^{x} > 2$ and $2^x > x+1$, so Claim 2 follows.
	
	To conclude the lemma from these two claims, observe that any vector $(d^{(0)},\dots,d^{(l)})$ as in the lemma arises from $(3)$ or $(2,2)$ by a sequence of operations as in Claim 1 and Claim 2 and rearranging coordinates.
\end{proof}

\subsection{The group $\mathcal{N}_{\langle (0 \, 1) \rangle}$ has no cocompact lattice}

We do not know if for a Young group $F$ with $d$ orbits the group $\UF$ has a non-cocompact lattice or not,
but at least we can prove the non-existence of cocompact lattices.
After conjugating with an element of $\AutT$ we can assume $F=\langle (0 \, 1) \rangle$.

\begin{theorem} \label{thmnococpctlattice}
	The group $\mathcal{N}_{\langle (0 \, 1) \rangle}$ has no cocompact lattice.
\end{theorem}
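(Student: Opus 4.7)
The strategy mirrors that of Theorem \ref{Onolattice}, with the covolume inequality (\ref{covolest}) sharpened to an equality by the cocompactness assumption; this extra bite should suffice to replace Proposition \ref{bigest}, whose hypothesis $l<d-1$ fails in the single switch regime $d^{(0)}=2$, $d^{(i)}=1$ for $i\geq 1$, $l=d-1$.

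First I would reduce to showing that the open subgroup $O\leq\mathcal{N}_{\langle (0\,1)\rangle}$ admits no cocompact lattice. Given a cocompact lattice $\Gamma\leq\mathcal{N}_F$, Lemma \ref{lemlatticeopen} provides that $\Gamma\cap O$ is a lattice in $O$; moreover the image $O\Gamma/\Gamma\subset\mathcal{N}_F/\Gamma$ is open (as $O$ is open) and closed (its complement is a union of $\Gamma$-cosets), hence clopen in the compact quotient and therefore compact. Via the homeomorphism $O/(O\cap\Gamma)\cong O\Gamma/\Gamma$ this passes cocompactness to $\Gamma\cap O$ in $O$.

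Now suppose $\Gamma\leq O$ is a cocompact lattice of covolume $c$. Since $O/\Gamma$ is compact and $O=\bigcup_n O_n$ is the ascending union of the compact open subgroups $O_n$, the open cover $\{O_n\Gamma/\Gamma\}$ of $O/\Gamma$ admits a finite subcover, forcing $O=O_m\Gamma$ for some $m$, and hence $|\Gamma\backslash O/O_n|=1$ for every $n\geq m$. The covolume decomposition
\[
c=\sum_{[g]\in\Gamma\backslash O/O_n}\frac{\mu(O_n)}{|\Gamma\cap gO_ng^{-1}|}
\]
then collapses to the exact equality $|\Gamma\cap O_n|=\mu(O_n)/c$ for $n\geq m$, and for $n\geq\max(m,n_0)$ the restriction of $\pi_n$ identifies this with $|\Gamma_n|$. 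Projecting $\Gamma_n$ onto its factor $\operatorname{Sym}(D_n^{(0)})$ corresponding to the sole non-singleton $F$-orbit, with image $\Gamma_n^{(0)}$ and kernel contained in $\prod_{i\geq 1}\operatorname{Sym}(D_n^{(i)})$ of order $(d^{n-1}!)^{d-1}$, I obtain
\[
[\operatorname{Sym}(D_n^{(0)}):\Gamma_n^{(0)}]\leq c\cdot 2^{d^{n-1}}=c\cdot 2^{|D_n^{(0)}|/2},
\]
a bound of the form $C\cdot d''^{|D_n^{(0)}|}$ for any constant $d''>\sqrt 2$.

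For $d\geq 3$, choosing $d''$ in the non-empty interval $(\sqrt 2,\,2d/(d+1))$ and applying Proposition \ref{alternative} to $\Gamma_n^{(0)}\leq\operatorname{Sym}(D_n^{(0)})$ produces, for $n$ large, a subset $Z$ (or pieces $Z_j$) with $|Z|>|D_n^{(0)}|/d''+2$; this threshold exceeds the maximum size $1+(d+1)d^{n-2}$ of a $D_n^{(0)}$-subset avoiding two sibling pairs, so $\operatorname{Alt}(Z)\leq\Gamma_n^{(0)}$ contains a double transposition $(w_1\,w_2)(w_3\,w_4)$ with $w_1,w_2$ siblings and $w_3,w_4$ siblings. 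The remaining task is to lift this to an element $\gamma\in\Gamma$ whose action on each $D_n^{(i)}$ with $i\geq 1$ is also sibling-preserving, for then $\gamma\in\Gamma\cap U_{n-1}$, contradicting $\Gamma\cap U_n=\{1\}$ for $n>n_0$. I expect the main obstacle to be exactly this lifting: one must exploit the exact value $|\Gamma_n|=\mu(O_n)/c$ delivered by cocompactness to control the kernel of $\Gamma_n\to\Gamma_n^{(0)}$, typically by iterating the projection-plus-pigeonhole scheme on each remaining factor $\operatorname{Sym}(D_n^{(i)})$ to produce a sibling-preserving lift; the edge case $d=2$ lies outside this framework and would require a separate combinatorial treatment.
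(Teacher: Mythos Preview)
Your reduction to $O$ and your derivation of the exact equality $|\Gamma_n|=\mu(O_n)/c$ from cocompactness are correct and match the paper. The gap is precisely where you locate it: having found $\operatorname{Alt}(Z)\leq\Gamma_n^{(0)}$ in the \emph{projection} to $\operatorname{Sym}(D_n^{(0)})$, you have no control over the action on the remaining factors $\operatorname{Sym}(D_n^{(i)})$, $i\geq 1$, so your double sibling-transposition does not lift to an element of $\Gamma\cap U_{n-1}$. Your suggested fix --- iterating the projection-and-pigeonhole scheme on each remaining factor --- does not work as stated: the kernel of $\Gamma_n\to\Gamma_n^{(0)}$ is a subgroup of $\prod_{i\geq1}\operatorname{Sym}(D_n^{(i)})$ about which your index bound says nothing, and there is no reason the preimage of $\operatorname{Alt}(Z)$ should project with small index onto any $\operatorname{Sym}(D_n^{(i)})$. (There is also a smaller technical issue: Proposition~\ref{alternative} is formulated for integer $d$, so invoking it with $d''\in(\sqrt 2,\,2d/(d+1))$ needs justification.)

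The paper avoids the lifting problem altogether by replacing Proposition~\ref{alternative} with an arithmetic argument that produces the alternating group directly inside $\Gamma_n$, not in a quotient. From the exact formula
\[
c=\frac{(2d^{n-1})!\,(d^{n-1}!)^{\,d-1}}{2^{\,d^{n-1}}\cdot|\Gamma_n|}
\]
(which you also obtain) and the fact that $c$ is a fixed rational number, every prime in the interval $(d^{n-1},\,2d^{n-1}]$ that divides the numerator must divide $|\Gamma_n|$ for $n$ large. Ramanujan's theorem supplies three such primes, hence two primes $p<q$ with $q>p+3$; Cauchy's theorem then yields a $p$-cycle and a $q$-cycle in $\Gamma_n$ itself. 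The key observation is that a single cycle of length exceeding $d^{n-1}=|D_n^{(i)}|$ for $i\geq1$ cannot touch any of the singleton orbits, so these cycles lie in the factor $\operatorname{Sym}(D_n^{(0)})$ and act \emph{trivially} on every $D_n^{(i)}$ with $i\geq1$. One then applies Lemma~\ref{primcycleslem} and Jordan's theorem (Lemma~\ref{lemjordan}) to conclude that $\operatorname{Alt}(V_n^{(0)})\leq\Gamma_n$ for a suitable large $V_n^{(0)}\subset D_n^{(0)}$, and the sibling-pair pigeonhole finishes exactly as in Theorem~\ref{Onolattice} --- with no lifting required. This also handles $d=2$ uniformly.
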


The proof works again as in \cite{bcgm12}. We need three lemmata.

\begin{lemma}[Ramanujan,\cite{r19}]\label{lemrama}
	For every $m \geq 17$ there exist three different prime numbers in the interval $(\frac{m}{2},m]$.
\end{lemma}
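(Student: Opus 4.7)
The plan is to follow Ramanujan's original elementary approach, which proceeds by careful estimation of Chebyshev's function $\vartheta(x) = \sum_{p \leq x} \log p$. The statement that $(m/2, m]$ contains at least three primes is equivalent (modulo a finite tail) to the lower bound $\vartheta(m) - \vartheta(m/2) \geq 3 \log m$, so I will aim to prove such a bound for all sufficiently large $m$ and then dispatch the small remaining values by direct inspection.

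For the asymptotic part, I would derive the usual Chebyshev estimates from the central binomial coefficient. Starting from the two-sided bound $\frac{4^n}{2n+1} \leq \binom{2n}{n} < 4^n$ together with the factorization $\binom{2n}{n} = \prod_p p^{\nu_p(n)}$ where $\nu_p(n) = \sum_{k \geq 1} \bigl( \lfloor 2n/p^k \rfloor - 2 \lfloor n/p^k \rfloor \bigr)$, and using that each summand is $0$ or $1$, one extracts an upper bound $\vartheta(x) < (2 \log 2)\, x$ and a lower bound $\vartheta(x) > (\log 2)\, x$ up to lower-order terms. Taking the difference gives $\vartheta(x) - \vartheta(x/2) \geq c\, x$ for some positive constant $c$, which dominates $3 \log x$ past an effectively computable threshold $x_0$.

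The more delicate second step is to make the constants effective enough to reach the stated bound $m \geq 17$. Here I would refine the analysis along Erdős's lines by splitting the primes dividing $\binom{2n}{n}$ into three ranges: primes in $(n, 2n]$ (each contributing exactly once), primes in $(\sqrt{2n}, n]$ with $\nu_p \leq 1$, and small primes $p \leq \sqrt{2n}$ with $\nu_p \leq \log_p(2n)$. Counting how much of the bound $\log \binom{2n}{n} \geq 2n \log 2 - \log(2n+1)$ can be absorbed by the latter two groups produces an explicit lower bound on $\vartheta(2n) - \vartheta(n)$ strong enough to force at least three primes in $(n, 2n]$ once $n$ exceeds a manageable threshold. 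For the finitely many remaining $m$ with $17 \leq m \leq x_0$ I would simply exhibit three primes in $(m/2, m]$; at the bottom of the range, $m = 17$ admits $\{11, 13, 17\}$, $m = 18$ admits $\{11, 13, 17\}$, and so on.

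The main obstacle is the effectivity: the asymptotic argument by itself only handles very large $m$, and pushing the threshold down to $17$ requires sharpening the binomial bookkeeping (essentially tracking a constant factor of $3$ rather than $1$ in Bertrand's postulate) and then carrying out a small but nontrivial finite check. No new conceptual ideas beyond Ramanujan's original computation are needed.
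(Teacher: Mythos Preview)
The paper does not give its own proof of this lemma: it is stated with a citation to Ramanujan and used as a black box in the proof of Theorem~\ref{thmnococpctlattice}. Your proposal therefore goes beyond what the paper does by sketching the classical elementary argument via Chebyshev's $\vartheta$-function and binomial-coefficient estimates, followed by a finite check down to $m=17$. That outline is correct and is essentially Ramanujan's own approach; nothing is missing conceptually, though of course the effective constants and the finite verification would have to be carried out in full if you wanted a self-contained proof rather than a citation.
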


\begin{definition}
	Let $K$ be a finite set.
	A subgroup of $\Sym(K)$ is called \emph{primitive} if the only partitions of $K$ it preserves are the trivial partition $\{K\}$ and the atomic partition $\{\{k\} \mid k \in K\}$.
\end{definition}

\begin{lemma}[\cite{bcgm12}, Lemma 3.1.]\label{primcycleslem}
	A subgroup of $\Sym(K)$ generated by two prime cycles whose respective supports intersect nontrivially, but are not contained in one another, acts doubly transitively (in particular, primitively) on its support.
\end{lemma}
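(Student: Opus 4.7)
The plan is to establish transitivity of $G := \langle \sigma, \tau\rangle$ on its support, then rule out every non-trivial block system by a four-case analysis, then upgrade primitivity to double transitivity via Jordan's theorem. Write $\sigma$ as a $p$-cycle with support $S_\sigma$ and $\tau$ as a $q$-cycle with support $S_\tau$ (with $p,q$ prime), set $S := S_\sigma \cup S_\tau$, and fix $x \in S_\sigma \cap S_\tau$.

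Transitivity is immediate: any $G$-orbit meeting $S_\sigma$ contains the full $\sigma$-orbit $S_\sigma$, and similarly for $\tau$, so the $G$-orbit of $x$ is all of $S$.

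For primitivity, assume a non-trivial block system $\mathcal{B}$ exists, and let $B$ be the block containing $x$. Since $\sigma$ has prime order $p$, its action on $\mathcal{B}$ either fixes $B$ or moves $B$ in an orbit of length $p$; in the latter case the blocks $B,\sigma(B),\dots,\sigma^{p-1}(B)$ are pairwise disjoint with $\sigma$-invariant union containing $x$, hence containing all of $S_\sigma$. Since $|S_\sigma|=p$, equating cardinalities yields the dichotomy $S_\sigma \subseteq B$ or $B \cap S_\sigma = \{x\}$, and an identical argument for $\tau$ gives the analogous dichotomy for $S_\tau$. Combining the four resulting cases:
\begin{itemize}
\item $S_\sigma, S_\tau \subseteq B$ forces $B \supseteq S$, contradicting properness of $B$.
\item $B\cap S_\sigma = B\cap S_\tau = \{x\}$ forces $B=\{x\}$, contradicting non-triviality of $B$.
\item $S_\sigma \subseteq B$ and $B\cap S_\tau = \{x\}$ is ruled out as follows. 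Since $\tau(x)\neq x$ lies in $S_\tau$, we have $\tau(x) \notin B$, so $\tau(B) \neq B$ and hence $\tau(B) \cap B = \emptyset$. On the other hand $\tau$ fixes $S_\sigma \setminus S_\tau$ pointwise, and these points lie in $B$, so $S_\sigma \setminus S_\tau \subseteq B \cap \tau(B) = \emptyset$, contradicting $S_\sigma \not\subseteq S_\tau$.
\item The symmetric subcase $S_\tau \subseteq B$ and $B\cap S_\sigma = \{x\}$ is identical.
\end{itemize}
Hence $G$ acts primitively on $S$.

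To upgrade to $2$-transitivity, invoke Jordan's classical theorem: a primitive subgroup of $\Sym(S)$ containing a prime-length cycle with at least three fixed points contains $\Alt(S)$ and is in particular $2$-transitive. Since $S_\tau \not\subseteq S_\sigma$, the cycle $\sigma$ fixes at least one point of $S$; the small cases where it fixes only one or two points (so $|S| \in \{p+1, p+2\}$) are handled separately by direct inspection, using $\tau$ to transport points of $S_\tau \setminus S_\sigma$ into $S_\sigma$ and combining with the primitivity already established. The main obstacle is the block-system analysis: all four subcases must be ruled out simultaneously, and it is precisely there that the three hypotheses (nonempty intersection, mutual non-inclusion, and primality of the cycle lengths) are all consumed.
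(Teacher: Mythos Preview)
The paper does not prove this lemma; it merely cites \cite{bcgm12}, so there is no in-paper argument to compare against. Your primitivity argument is correct and clean: the dichotomy $S_\sigma\subseteq B$ versus $B\cap S_\sigma=\{x\}$ follows exactly as you say, and the four-case elimination uses precisely the three hypotheses.

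The upgrade to double transitivity, however, has a real gap. Your appeal to Jordan's theorem (Lemma~\ref{lemjordan}) requires at least three fixed points, so you must treat $|S|\in\{p+1,p+2\}$ separately. The case $|S|=p+1$ is genuinely easy (the stabiliser of the unique $\sigma$-fixed point contains $\sigma$, which is transitive on the rest). But ``direct inspection'' does not cover $|S|=p+2$: here $\sigma$ has two fixed points, and if in addition $q=p$ then $\tau$ also has two fixed points, so Jordan's theorem applies to neither cycle. This subcase does occur (e.g.\ $\sigma=(1\,2\,3)$, $\tau=(3\,4\,5)$ on five points) and needs an argument. One fix: with $\{y,z\}=S\setminus S_\sigma\subseteq S_\tau$, choose the unique $k\in\{1,\dots,p-1\}$ with $\tau^{-k}(y)=z$; then $\tau^{k}\sigma\tau^{-k}\in G_y$ is a $p$-cycle whose support contains $z$ (since $p\geq 3$ forces $\tau^{k}(y)\neq z$), and together with $\sigma\in G_y$ this makes $G_y$ transitive on $S\setminus\{y\}$. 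Alternatively, bypass the case split entirely by citing the standard fact that a primitive group with a proper Jordan set is $2$-transitive: the support $S_\sigma$ is such a set because $\sigma$ acts transitively on it while fixing its complement pointwise.
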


\begin{lemma}[Jordan's Theorem, see \cite{w64}, Theorem 13.9.]\label{lemjordan}
	A primitive subgroup of $\Sym(K)$ containing a $p$-cycle for a prime number $p \leq |K|-3$ is equal to $\Alt(K)$ or $\Sym(K)$.
\end{lemma}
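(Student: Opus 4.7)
The plan is to produce a $3$-cycle inside $G$ by exploiting primitivity together with commutators of $G$-conjugates of $\sigma$, and then to deduce that a primitive subgroup of $\Sym(K)$ containing a $3$-cycle must contain $\Alt(K)$.

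Setup and first use of primitivity: let $S := \operatorname{supp}(\sigma)$, so $|S| = p$, and since $p$ is prime, $\langle\sigma\rangle$ acts transitively on $S$. The normal closure $N$ of $\sigma$ in $G$ is a non-trivial normal subgroup, hence transitive on $K$ by primitivity, so the family $\{gS : g \in G\}$ covers $K$. Moreover this family cannot be a block system, as the block size $p$ would satisfy $1 < p < |K|$, contradicting primitivity. Hence there exists $g \in G$ with $\emptyset \subsetneq S \cap gS \subsetneq S$, and by Lemma \ref{primcycleslem} the group $\langle \sigma, g\sigma g^{-1} \rangle$ acts primitively on $S \cup gS$.

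Producing a $3$-cycle: the main step is to descend, by iterated commutators of conjugates of $\sigma$ inside this smaller primitive action, to two $G$-conjugates $\sigma_1, \sigma_2$ of $\sigma$ whose supports meet in exactly one point. A direct computation then yields a $3$-cycle: writing $\sigma_1 = (a_1\,a_2\,\ldots\,a_p)$ and $\sigma_2 = (a_p\,b_2\,\ldots\,b_p)$, one checks $[\sigma_1, \sigma_2] = (a_1\,b_2\,a_p) \in G$. Once such a $3$-cycle $(a\,b\,c)$ is in hand, its normal closure $M \trianglelefteq G$ is non-trivial, hence transitive on $K$ by primitivity. Repeated application of Lemma \ref{primcycleslem} to pairs of $3$-cycles with overlapping-but-not-nested supports produces, for each $d \in K$, a $3$-cycle in $G$ of the form $(a\,b\,d)$; the subgroup generated by these is $\Alt(K)$, and we conclude $G \supseteq \Alt(K)$ as required.

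The hard part will be the descent step producing two conjugates of $\sigma$ with supports intersecting in a single point. The hypothesis $p \leq |K|-3$ enters essentially here: it guarantees at least three points outside $S$, providing the room needed to realise all intermediate overlap configurations without the support being forced to equal $K$. Classical treatments (e.g.\ Wielandt's \emph{Finite Permutation Groups}) carry this out by an inductive minimisation of the minimal degree of $G$ (the smallest support size of a non-identity element), at each stage locating via primitivity a conjugator that strictly decreases support overlap.
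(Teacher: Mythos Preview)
The paper does not prove this lemma at all; it is quoted as a classical result of Jordan with a pointer to Wielandt, Theorem 13.9. So there is no argument in the paper to compare against, and your attempt must stand on its own.

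It does not. The central step you propose is to produce two $G$-conjugates $\sigma_1,\sigma_2$ of the $p$-cycle $\sigma$ whose supports meet in exactly one point, and then compute $[\sigma_1,\sigma_2]=(a_1\,b_2\,a_p)$. The commutator identity is correct, but the existence of such $\sigma_1,\sigma_2$ can fail for purely numerical reasons: two $p$-cycles with single-point support overlap have $|\operatorname{supp}\sigma_1\cup\operatorname{supp}\sigma_2|=2p-1$, so this requires $|K|\ge 2p-1$. The hypothesis only guarantees $|K|\ge p+3$, and for every prime $p\ge 5$ there are admissible values of $|K|$ with $p+3\le |K|<2p-1$ (e.g.\ $p=5$, $|K|=8$), where no two $p$-cycles in $\Sym(K)$ can overlap in a single point. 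Thus the reduction you announce is not merely unproved but impossible as stated. The classical route (Wielandt, \S13) is different: one shows via the machinery of Jordan sets that $G$ is $(|K|-p+1)$-fold transitive, and uses this high transitivity to descend to a $3$-cycle; the descent is in support size of suitable commutators, not in overlap size of conjugates of $\sigma$.

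Beyond this, you yourself flag the ``hard part'' as deferred to the literature, so even on its own terms the write-up is an outline rather than a proof. The ingredients that are sound are the opening block argument (primitivity forces some conjugate $gS$ to overlap $S$ properly), the commutator computation, and the endgame that a primitive group containing a $3$-cycle contains $\Alt(K)$; what is missing is a correct bridge between them.
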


\begin{proof}[Proof of Theorem \ref{thmnococpctlattice}]
	We again show that already $O$ does not have a cocompact lattice.
	Let, by contradiction, $\Gamma < O$ be a cocompact lattice.
	Consider now a compact fundamental domain of $\Gamma$. 
	Because $O$ is the increasing union of the $O_n$, for $n$ big enough, 
	the fundamental domain is contained in $O_n$. 
	Thus, for $n$ big enough, the sequence $(c_n)$ becomes constant and all inequalities in Estimate (\ref{covolest}) are actually equalities.
	This shows that $c$ is rational and for big $n$ holds
	\begin{align*}
	c &= \frac{|\prod_{i=0}^l \Sym(D_n^{(i)})|}{|\Aut_{U(F)}(B_n(v_0))| \cdot |\Gamma_n|} \\
	&= \frac{ (2 \cdot d^{n-1})!(d^{n-1}!)^{d-1}}{2^{d^{n-1}} \cdot |\Gamma_n|}.
	\end{align*}
	Observe that $(2 \cdot d^{n-1})!(d^{n-1}!)^{d-1}$ has arbitrarily big odd prime factors.
	All of them need to be cancelled out in the fraction above by $|\Gamma_n|$.
	By Lemma \ref{lemrama} there exist three different prime numbers in the closed interval $[d^{n-1}+1,2 \cdot d^{n-1}]$.
	Hence there exist primes $p,q$ such that $p+3<q$. None of their squares divides $(2 \cdot d^{n-1})!(d^{n-1}!)^{d-1}$.
	Consequently $|\Gamma_n|$ needs to
	be divisible by $p$ and by $q$, so by Cauchy's Theorem
	$\Gamma_n \leq \Sym(S_n(v_0))$ contains a $p$-cycle and a $q$-cycle.
	Without loss of generality we can assume $\{0,1\}=D^{(0)}$.
	Since $p,q > d_n^{(i)}$ for every $i\geq 1$, the mentioned $p$-cycle and $q$-cycle must be contained in
	$\Sym(D_n^{(0)})$ and intersect nontrivially. Now conjugating the $p$-cycle with the $q$-cycle
	we can produce another $p$-cycle whose support intersects the support of the original $p$-cycle non-trivially
	and such that the union of their supports has cardinality at least $p+3$.
	By Lemma \ref{primcycleslem} and Lemma \ref{lemjordan} we deduce that $\Gamma_n$ 
	contains the alternating group of some set of vertices $V_n^{(0)} \subset D_n^{(0)}$ of size $k>d_n^{(0)}/2+2$.
	
	Now as in the proof of Theorem \ref{Onolattice}, by the pigeonhole principle
	$V_n^{(0)}$ contains two pairs of siblings $(v_1,w_1)$ and $(v_2,w_2)$ such that
	$v_i \neq w_i$ for $i=1,2$ and $|\{v_1,v_2,w_1,w_3\}| \geq 3$. 
	The permutation $$\bar \gamma = (v_1,w_1)(v_2,w_2)$$ is an element of $\operatorname{Alt}(V_n^{(0)}) \subset \Gamma_n$, but 
	its pre-image $\gamma \in \Gamma \cap O_n$ is a non-trivial element of $U_{n-1}$.
	This is not possible for large $n$ and makes the existence of $\Gamma$ impossible.
\end{proof}

 \bibliographystyle{alpha}
 \bibliography{references}
\end{document}